\theoremstyle{remark}
\newtheorem{example}{\textbf{Example}}[section]
\numberwithin{equation}{section}
\def\tr{\textcolor{red}}
\newcommand\figcaption{\def\@captype{figure}\caption}
\newcommand\tabcaption{\def\@captype{table}\caption}
\def\bq{\begin{equation}}
\def\eq{\end{equation}}
\def\bqs{\begin{equation*}}
\def\eqs{\end{equation*}}
\def\bsqs{\begin{subequations}}
\def\esqs{\end{subequations}}
\def\ba{\begin{aligned}}
\def\ea{\end{aligned}}
\def\br{\begin{eqnarray}}
\def\er{\end{eqnarray}}
\def\brr{\bq\begin{array}{rlll}}
\def\err{\end{array}\eq}
\def\text#1{\hbox{#1}}
\newtheorem{thm}{Theorem}[section]
\newtheorem{lem}{Lemma}[section]
\newtheorem{rem}{Remark}[section]
\newcommand{\bsub}{\begin{subequations}}
\newcommand{\esub}{\end{subequations}$\!$}
\title[IEQ-DG methods for the Cahn-Hilliard equation]{Unconditionally energy stable discontinuous Galerkin schemes for the Cahn-Hilliard equation}
\author[H.~Liu, P. ~Yin]{Hailiang Liu$^\dagger$ and Peimeng Yin$^\S$}
\address{$^\ddagger$ Iowa State University, Department of Mathematics, Ames, IA 50011} \email{hliu@iastate.edu}
\address{$^\S$ Wayne State University, Department of Mathematics, Detroit, MI 48202} \email{pyin@wayne.edu}
\keywords{Cahn-Hilliard equation, energy dissipation, mass conservation, DG method, IEQ method.}
\subjclass{65N12, 65N30,  35K35}
\begin{document}

\begin{abstract}
In this paper, we introduce novel discontinuous Galerkin (DG) schemes for the Cahn-Hilliard equation,
which arises in many applications.
The method is designed by integrating the mixed DG method for the spatial discretization with the \emph{Invariant Energy Quadratization}  (IEQ) approach for the time discretization.
Coupled with a spatial projection, the resulting IEQ-DG schemes are shown to be unconditionally energy dissipative, and can be efficiently solved without resorting to any iteration method. 
Both one and two dimensional numerical examples are provided to verify the theoretical results,  and demonstrate the good performance of IEQ-DG in terms of  efficiency, accuracy, and preservation of the desired solution properties. 
\end{abstract}

\maketitle

\bigskip


\section{Introduction}
The Cahn-Hilliard (CH) equation, originally introduced in \cite{CH58} as a model of phase separation in binary alloys, has become a fundamental equation as well as a building block in the phase field methodology for moving interface problems arising from various applications (see, e.g., \cite{Mc02} for the references therein).

This work is concerned with high order numerical approximations to the Cahn-Hilliard problem:  find $\{u(x, t), w(x, t)\}$
for $x\in \Omega$ and $t>0$ such that
\begin{equation}\label{CH}
\begin{aligned}
 u_t & = \nabla \cdot (M(u) \nabla w),  \\ 
 w & = - \epsilon^2 \Delta u + F'(u), \\ 
 u(x,0) & = u_0(x),  \quad x \in \Omega,
\end{aligned}
\end{equation}
where $\Omega \subseteq \mathbf{R}^d(d=1,2, 3)$ is a bounded domain, $\epsilon$ is a positive parameter, $M(u)\geq 0$ is the mobility function, $F(u)$ is the nonlinear bulk potential, and $u_0(x)$ is the initial data.

The well-posedness study of the Cahn--Hilliard equation is very rich, and the existing results are mainly  for two types of models:  one is the constant mobility with polynomial potentials (\cite{EZ86}), and another is the degenerate mobility of form $M(u)=u(1-u)$  (\cite{EG96}) with the potential of logarithmic types (see, e.g., \cite{CH58, EL91,Y92, EG96, DD95}). Here we numerically study model (\ref{CH}) with no restrictions on the specific form of the mobility and free energy.  For the degenerate CH model, we apply the established scheme to regularized systems as discussed in Section \ref{sec4}. While we also refer the reader to 
  \cite{PP19} for a new relaxation system to approximate the degenerate CH model.

We consider in this paper the following boundary conditions:
\begin{equation}\label{BC}
\text{(i)} \ u \text{ is periodic};  \quad \text{or (ii)}   \ \partial_\mathbf{n} u=M(u)\partial_\mathbf{n} w=0, \quad x\in \partial \Omega.
\end{equation}
Here $\mathbf{n}$ stands for the unit outward normal to the boundary $\partial \Omega$. With such boundary conditions,  equation (\ref{CH}) is endowed with  a gradient flow structure $u_t= \nabla \cdot(M(u)\frac{\delta}{\delta u} \mathcal{E})$, dictated  by the energy dissipation law
\begin{align}\label{di}
\frac{d}{dt} \mathcal{E}(u) = - \int_\Omega M(u) |\nabla w|^2 \leq 0,
\end{align}
where the total free energy is defined by
\bq\label{ch_engdisO}
\mathcal{E}(u) = \int_{\Omega} \left( \frac{\epsilon^2}{2}|\nabla u|^2 + F(u) \right)dx.
\eq
The CH model is nonlinear and its analytical solutions are intractable.  Also, for a gradient flow such as the CH equation steady states are of particular interest.  
Hence, designing accurate, efficient, and energy stable algorithms to solve it becomes essential, in particular
for the accuracy of long time simulations. Keeping the energy dissipation (\ref{di})
has been a major concern in the design of various numerical schemes, see, e.g. \cite{BE92, Ey98, F01, S04, SY10, FKU17, SXY19, YZ19,CWWW19}.

In this paper, we aim to develop unconditionally energy stable discontinuous Galerkin (DG) schemes for solving the CH model problem. To achieve this, we face two main challenges: (i) how to handle fourth order derivatives in the DG discretization; and (ii) how to handle the nonlinear term associated with the potential $F$ in time discretization.

For (i), several approaches have been adopted to deal with difficulties caused by the high order solution derivatives.  The first one is  the local DG (LDG) methods \cite{XXS07, GX14, SS17}, with which the original equation is rewritten into a first order system for further DG discretization. The second one is the mixed symmetric interior penalty DG (SIPG) methods \cite{WKG06, FK07, FLX16, FKU17}, with which the penalty terms are added as interface corrections upon the global solution formulation so that the resulting scheme is stable. 
 It is also possible to apply an ultra-weak DG discretization, such as the DG scheme in \cite{CS08} for the one-dimensional biharmonic equation.
The spatial DG discretization we adopt here takes advantages of both the SIPG method and the
mixed DG method without interior penalty in \cite{LY18, LY19} for another class of  fourth order PDEs.


For (ii), there are several related time discretization techniques available in the literature, including the so-called convex splitting approach \cite{Ey98, WZZ14, CWWW19} and the stabilization approach \cite{SY10, XT06}.  The convex splitting approach, introduced in the pioneering work \cite{Ey98}, treats the convex part of the potential implicitly and the concave part explicitly.  As a result such method is unconditionally energy stable, however, it produces nonlinear schemes, thus the implementations are often complicated with potentially high computational costs. The stabilization approach,  introduced in \cite{XT06} for epitaxial growth models,  treats the nonlinear term explicitly so that the resulting scheme becomes stable after a stabilization term is added to avoid strict time step constraints. However, a large stabilization constant is often needed to ensure energy stability.

A more feasible remedy would be to make a reasonable transform of the given potential.  One technique  is  the \emph{Invariant Energy Quadratization} (IEQ) approach introduced in \cite{Y16, ZWY17}, which generalized the two types of  linear energy stable schemes introduced in \cite{GT13}.  The IEQ method is known to provide flexibilities to treat the nonlinear terms since it only requests that the nonlinear potential be bounded from below. We note that recently a related approach, called the SAV method, has been introduced in \cite{SY18, SXY19}  with certain advantages over the IEQ. Yet  efficiently solving  the resulting linear system when coupled with a DG spatial discretization is subtle, see \cite{LY20}.
%

In order to design IEQ-DG schemes,  our strategy is to start with the model satisfying two basic assumptions: \\
(i) the mobility function $M(u)$ satisfies
$$
M(u) \geq M_{\min}>0;
$$
(ii) there exist a constant $B>0$, such that
$$
F(u)>-B,
$$
for any $u$ under consideration, and  further discuss how to extend the established schemes to more general cases.

The IEQ-DG method introduced in \cite{LY19} for  the Swift--Hohenberg equation has several advantages,  such as high order of accuracy, easy implementation, and efficiency.  Unfortunately, the DG discretization in  \cite{LY19}
when applied to the CH equation (\ref{CH}) does not seem to yield the desired energy stability. Therefore,  in this paper, we exploit the direct DG (DDG) method \cite{LY10} for both $u$ and $w$, which when taking  a special choice of the flux parameters can be reformulated into a mixed symmetric interior penalty DG (SIPG) scheme (see, e.g., \cite{FLX16}).

The IEQ approach for time discretization relies on an auxiliary variable $U=\sqrt{F(u)+B}$, so that
$$
 U_t = \frac{1}{2} H(u)u_t,\quad H(u):=F'(u)/\sqrt{F(u)+B}.
$$
With this transformation we update $U^{n}$ in two steps:  the piecewise $L^2$ projection with $U_h^n=\Pi U^n$, and the update step with
$$
\frac{U^{n+1} - U_h^n}{\Delta t}  = \frac{1}{2} H(u^{n}_h) \frac{u_h^{n+1} - u_h^n}{\Delta t}.
$$
The resulting IEQ-DG scheme follows from replacing the nonlinear function $F'(u_h^{n+1})$ by $H(u^{n}_h)U^{n+1}$ in the DG discretization (see the scheme formulation in (\ref{ch_FPDGFull1st+})).
In addition, the resulting discrete systems are linear. 
As a result, the methods are simple to implement and computationally efficient. With the techniques exploited in \cite{LY21}, the IEQ-DG schemes may be 6
extended to DG schemes of arbitrarily high order in time, yet still unconditionally energy stable.
Finally, closest to our work is \cite{GX19}, where the authors, building on the IEQ formulation with the LDG spatial discretization for phase field problems including the CH equation, proposed energy stable linear schemes combining with the semi-implicit spectral deferred correction to gain higher order time discretization, while the auxiliary variable is computed coupling with other unknowns.  In contrast, our algorithms enable an explicit update for the auxiliary variable, hence more efficient in computation.

\subsection{Our Contributions}
\begin{itemize}
\item We propose to solve (\ref{CH}) by simple IEQ-DG schemes, which integrate
the mixed DG method for spatial discretization with the IEQ approach for time discretization,
coupled with a crucial spatial projection.
\item We show that the semi-discrete DG scheme features a discrete energy dissipation law if the penalty parameter is suitably large,  and present both first and second order (in time) IEQ-DG algorithms. We prove that the IEQ-DG schemes are indeed unconditionally energy stable.
\item We conduct extensive experiments to evaluate the performance of IEQ-DG. First, we present numerical results to show the high order of spatial accuracy of the proposed schemes, the energy dissipating and mass conservative properties of numerical solutions. Second, we conduct experiments on some two-dimensional pattern formation problems, all of which demonstrate the good performance of IEQ-DG.
\end{itemize}

\subsection{Organization} In Section 2, we formulate a unified semi-discrete DG method for the CH equation (\ref{CH}) subject to  different boundary conditions. In Section 3, we present fully discrete DG schemes and show the energy dissipation and mass conservation properties. In Section 4, we discuss extensions to the case with degenerate mobility and the logarithmic Flory-Huggins potential.  In Section 5, we numerically verify the performance of IEQ-DG on different numerical examples. 
Finally in Section 6 some concluding remarks are given.

\section{Spatial DG discretization}

Let the domain $\Omega$ be a union of  rectangular meshes $\mathcal{T}_h=\{K\}:=\bigcup_{\alpha=1}^\mathcal{N} K_\alpha$, with $\alpha=(\alpha_1, \cdots, \alpha_d), \ \mathcal{N}=(\mathcal{N}_1, \cdots, \mathcal{N}_d)$ and $K_\alpha=I_{\alpha_1}^1\times \cdots \times I_{\alpha_d}^d$, where $I_{\alpha_i}^i=[x_{\alpha_i-1/2}^i, x_{\alpha_i+1/2}^i]$ for $\alpha_i=1, \cdots, \mathcal{N}_i$.
Denote by $h_i=\max_{1\leq \alpha_i \leq N_i} |I_{\alpha_i}^i|$, with $h=\max_{1\leq i \leq d}h_i$.
We denote the set of the interior interfaces by $\Gamma^0$, the set of all boundary faces by $\Gamma^\partial$, and $\Gamma_h=\Gamma^0 \cup \Gamma^\partial$.

The discontinuous Galerkin finite element space can be formulated as
$$
V_h = \{v\in L^2(\Omega): \ v|_{K} \in P^k(K), \ \forall K \in \mathcal{T}_h \},
$$
where $P^k(K)$ denotes the set of polynomials of degree no more than $k$ on element $K$.
Let $K_1$ and $K_2$ be two neighboring cells.
If the unit normal vector $\nu$ on element interfaces $e\in \partial K_1 \cap \partial K_2$ is oriented from $K_1$ to $K_2$,
then the average $\{\cdot\}$ and the jump $[\cdot]$ operator are defined by
$$
\{v\} = \frac{1}{2}(v|_{\partial K_1}+v|_{\partial K_2}), \quad [v]=v|_{\partial K_2}-v|_{\partial K_1}, 
$$
for any function $v \in V_h$,  where $v|_{\partial K_i} \ (i=1,2)$ is the trace of $v$ on $e$ evaluated from element $K_i$.

The direct DG discretization of (\ref{CH}) is to find $(u_h(\cdot, t), w_h(\cdot, t)) \in V_h \times V_h$ such that for all $\phi, \ \psi \in V_h$ and $K\in \mathcal{T}_h$
\begin{subequations}\label{SemiDGcell}
\begin{align}
\int_K u_{ht}  \phi dx = &-\int_K M(u_h) \nabla w_h \cdot \nabla \phi dx +  \int_{\partial K} M(\widehat{u_h})\left(\widehat{\partial_\nu w_h} \phi + (w_h-\widehat{w_h})\partial_\nu \phi\right) ds,\\
\int_K w_h  \psi dx = & \epsilon^2\left(\int_K \nabla u_h \cdot \nabla \psi dx -   \int_{\partial K} \widehat{\partial_\nu u_h} \psi + (u_h-\widehat{u_h})\partial_\nu \psi ds \right) + \int_K F'(u_h) \psi dx,
\end{align}
\end{subequations}
where $\nu$ stands for the outward normal direction to $\partial K$.  On each cell interface $e \in \partial K \bigcap \Gamma^0$, the numerical flux is taken as
\begin{equation}\label{ch_fluxI}
\begin{aligned}
\widehat{\partial_\nu v} = \frac{\beta_0[v]}{h_e}+\{\partial_\nu  v\}, \ \widehat{v} = \{v\},
\end{aligned}
\end{equation}
for $v=w_h, \ u_h$, where $\beta_0>0$ is a parameter to be determined. Here $h_e$ is the characteristic length of interface $e$. In case of the uniform meshes, we take $h_e=h_i$ at each interface $x_{\alpha_i+1/2}^i$ for $\alpha_i=0, 1, \cdots, \mathcal{N}_i$. The numerical fluxes on $e \in \partial K \bigcap \Gamma^\partial$ depend on the boundary conditions.
For periodic boundary conditions, the numerical fluxes can take the same formula as those in (\ref{ch_fluxI}). For homogeneous Neumann boundary conditions,
the numerical fluxes on the boundary $e \in \partial K \bigcap \Gamma^\partial$ are defined as
\begin{equation}\label{fluxN}
\widehat{\partial_\nu w_h} = 0, \ \widehat{w_h} = w_h, \ \widehat{\partial_\nu u_h} = 0, \ \widehat{u_h} = u_h.
\end{equation}
Summation of (\ref{SemiDGcell}) over all elements $K\in \mathcal{T}_h$ leads to a global DG formulation
\begin{subequations}\label{ch_SemiDGN}
\begin{align}
(u_{ht},\phi) = &- A(M(u_h); w_h,\phi),\\
(w_h, \psi) = &   A(\epsilon^2; u_h,\psi)+\left(F'(u_h),\psi \right),
\end{align}
\end{subequations}
where the bilinear functional is given by
\bq \label{ch_A0}
\ba
A(a(x);q,v) = & \sum_{K\in \mathcal{T}_h} \int_K a(x) \nabla q \cdot \nabla v dx + \sum_{e\in \Gamma^0} \int_e a(x)\left(\widehat{ \partial_\nu q} [v]+[q]\{ \partial_\nu v\} \right)ds\\
& +\frac{\tau}{2} \sum_{e\in \Gamma^\partial} \int_{e} a(x) \left( \widehat{ \partial_\nu q} [v]+ [q]\{\partial_\nu v\} \right)ds,
\ea
\eq
where $\tau=1$ for (i) in (\ref{BC}) and $\tau=0$ for (ii) in (\ref{BC}).
Here $a(x)=M(u_h)$ in $K$ but $M(\widehat{u_h})$ for $x \in e$. Note that the factor $\frac{\tau}{2}$ in (\ref{ch_A0}) is used to indicate that for periodic boundary conditions only one end in each direction should be counted. Here each respective type of boundary conditions specified in  (\ref{BC}) has been taken into account.  The initial data for $u_h$ is taken from $V_h$ so that
\bqs
\int_{\Omega} (u_0(x)-u_h(x,0))\phi dx=0, \quad \forall \phi\in V_h.
\eqs
As usual we denote $u_h(x,0)=\Pi u_0(x)$, where $\Pi$ is the piecewise $L^2$ projection.\\

We introduce the discrete energy
\bq\label{ch_oeq}
E(u_h)=\frac{1}{2}A(\epsilon^2; u_h,u_h)+\int_{\Omega}F(u_h) dx,
\eq
and the notation
\bq\label{DGnorm}
\|v\|^2_{DG}:= \sum_{K\in \mathcal{T}_h} \int_K |\nabla v|^2 dx + \left(\sum_{e\in \Gamma^0} +\frac{\tau}{2} \sum_{e\in \Gamma^\partial} \right) \int_e \frac{\beta_0}{h_e}[v]^2ds, \quad \forall v \in V_h.
\eq
We can show that if $\beta_0$ is suitably large, the semi-discrete DG scheme (\ref{ch_SemiDGN})
features a discrete energy dissipation law. 


\begin{lem}
For piecewise continuous function $a(x)>0$, there exists $\beta^*> 0$ such that if $\beta_0>\beta^*$, then
\bq\label{VCoer}
A(a(x); v,v) \geq c_0 \inf_{x\in \Omega}a(x)  \|v\|_{\rm DG}^2  \quad \forall v\in V_h. 
\eq
for some $c_0>0$.  As a result, we have
\bq\label{semieng}
\frac{d}{dt}E(u_h) = -A(M(u_h); w_h,w_h) \leq 0, \quad \forall t>0.
\eq
\end{lem}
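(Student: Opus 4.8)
The two assertions are of different character, so the plan is to establish the coercivity bound (\ref{VCoer}) first and then read off the energy identity (\ref{semieng}) from it. For (\ref{VCoer}) I would follow the standard symmetric-interior-penalty mechanism: expand $A(a(x);v,v)$, isolate the sign-indefinite interface term, absorb it with Young's inequality together with an inverse trace inequality on $V_h$, and then fix $\beta^*$ so that a strictly positive multiple of each of the two terms in $\|v\|_{\rm DG}^2$ survives.

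Inserting the flux (\ref{ch_fluxI}) with $q=v$ gives $\widehat{\partial_\nu v}\,[v]+[v]\{\partial_\nu v\}=\tfrac{\beta_0}{h_e}[v]^2+2[v]\{\partial_\nu v\}$, so by (\ref{ch_A0})
\[
A(a;v,v)=\sum_{K\in\mathcal{T}_h}\int_K a\,|\nabla v|^2\,dx+\Big(\sum_{e\in\Gamma^0}+\tfrac{\tau}{2}\sum_{e\in\Gamma^\partial}\Big)\int_e a\Big(\tfrac{\beta_0}{h_e}[v]^2+2[v]\{\partial_\nu v\}\Big)\,ds.
\]
For case (ii) in (\ref{BC}) one has $\tau=0$ and the boundary edges drop out, while in the periodic case they behave exactly like interior edges, so I need not distinguish them below. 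Since $a\ge a_{\min}:=\inf_\Omega a>0$, the volume term and the $\beta_0$-penalty term already dominate $a_{\min}$ times the matching terms of (\ref{DGnorm}); only the cross term $2\int_e a\,[v]\{\partial_\nu v\}$ is sign-indefinite.

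To control it, on each edge $e$ shared by $K_1,K_2$ I would apply Young's inequality, $2\big|\int_e a\,[v]\{\partial_\nu v\}\,ds\big|\le\tfrac{\delta}{h_e}\int_e a\,[v]^2\,ds+\tfrac{h_e}{\delta}\int_e a\,\{\partial_\nu v\}^2\,ds$, together with $\{\partial_\nu v\}^2\le\tfrac12((\partial_\nu v|_{K_1})^2+(\partial_\nu v|_{K_2})^2)$ and the inverse trace inequality $h_e\int_e(\partial_\nu v|_K)^2\,ds\le C_{\rm inv}\int_K|\nabla v|^2\,dx$, with $C_{\rm inv}$ depending only on $k$ and the mesh. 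Summing over edges (each rectangular $K$ has $2d$ faces) and bounding the edge values of $a$ by $\sup_\Omega a$ leads to
\[
A(a;v,v)\ \ge\ \Big(a_{\min}-\tfrac{C'}{\delta}\Big)\sum_{K\in\mathcal{T}_h}\int_K|\nabla v|^2\,dx+a_{\min}(\beta_0-\delta)\Big(\sum_{e\in\Gamma^0}+\tfrac{\tau}{2}\sum_{e\in\Gamma^\partial}\Big)\int_e\tfrac{[v]^2}{h_e}\,ds,
\]
with $C'$ collecting $d$, $C_{\rm inv}$ and the finite ratio $\sup_\Omega a/\inf_\Omega a$. Fixing $\delta:=2C'/a_{\min}$ makes the volume coefficient equal $a_{\min}/2$, and then $\beta^*:=2\delta$ makes $\beta_0-\delta>\beta_0/2$ for every $\beta_0>\beta^*$; dividing by $a_{\min}$ yields (\ref{VCoer}) with $c_0=\tfrac12$.

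For (\ref{semieng}) I would test the two equations of (\ref{ch_SemiDGN}) against $\psi=u_{ht}$ and $\phi=w_h$. Since $A(\epsilon^2;\cdot,\cdot)$ has the constant coefficient $\epsilon^2$ and the interface integrand $\tfrac{\beta_0}{h_e}[q][v]+\{\partial_\nu q\}[v]+[q]\{\partial_\nu v\}$, which is symmetric in $q,v$, it is a symmetric bilinear form, so $A(\epsilon^2;u_h,u_{ht})=\tfrac12\tfrac{d}{dt}A(\epsilon^2;u_h,u_h)$; likewise $(F'(u_h),u_{ht})=\tfrac{d}{dt}\int_\Omega F(u_h)\,dx$ by the chain rule. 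Hence, with $E$ as in (\ref{ch_oeq}),
\[
\tfrac{d}{dt}E(u_h)=A(\epsilon^2;u_h,u_{ht})+(F'(u_h),u_{ht})=(w_h,u_{ht})=(u_{ht},w_h)=-A(M(u_h);w_h,w_h),
\]
and (\ref{VCoer}) with $a=M(u_h)\ge M_{\min}>0$ gives $A(M(u_h);w_h,w_h)\ge c_0M_{\min}\|w_h\|_{\rm DG}^2\ge0$, which is (\ref{semieng}). I expect the coercivity step to be the only real obstacle: one has to pin down an explicit inverse trace constant and then balance the absorbed constants so that both remainders stay strictly positive rather than merely nonnegative, while tracking how the mesh regularity and $\sup_\Omega a/\inf_\Omega a$ enter $\beta^*$; the expansion of $A(a;v,v)$ and the energy identity itself are routine once the flux identity and the symmetry of $A(\epsilon^2;\cdot,\cdot)$ are noted.
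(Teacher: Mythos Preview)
Your proposal is correct and follows essentially the same route as the paper: expand $A(a;v,v)$ via the flux identity, control the cross term $2\int_e a\,[v]\{\partial_\nu v\}$ by Young's inequality plus an inverse trace estimate, then balance constants so both pieces of $\|v\|_{\rm DG}^2$ survive; and for the energy law, test (\ref{ch_SemiDGN}) with $\phi=w_h$, $\psi=u_{ht}$ and use the symmetry of $A(\epsilon^2;\cdot,\cdot)$. The only cosmetic difference is that the paper packages the inverse trace step into the supremum $\beta^*=\sup_{v\in\tilde V_h}\frac{\sum_e h_e\int_e a\{\partial_\nu v\}^2}{\sum_K\int_K a|\nabla v|^2}$ and optimizes the Young parameter as $\alpha=\sqrt{\beta_0\beta^*}$, obtaining $c_0=1-\sqrt{\beta^*/\beta_0}$, whereas you fix $\delta$ independently of $\beta_0$ and get $c_0=\tfrac12$ at the cost of a possibly larger threshold.
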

\begin{proof}
(i)
By Young's inequality, we have
\bqs
\ba
A(a(x);v,v)= & \sum_{K\in \mathcal{T}_h} \int_K a(x) |\nabla v|^2 dx + \left(\sum_{e\in \Gamma^0} + \frac{\tau}{2} \sum_{e\in \Gamma^\partial} \right) \int_e a(x)[v]\left( \frac{\beta_0}{h_e} [v]+2\{ \partial_\nu v\} \right)ds \\
\geq & \sum_{K\in \mathcal{T}_h} \int_K a(x) |\nabla v|^2 dx + \left(\sum_{e\in \Gamma^0} + \frac{\tau}{2} \sum_{e\in \Gamma^\partial} \right) \int_e a(x) \frac{\beta_0}{h_e}[v]^2ds  \\
& - \left(\sum_{e\in \Gamma^0} + \frac{\tau}{2} \sum_{e\in \Gamma^\partial} \right) \left( \frac{\alpha}{h_e} \int_e a(x)[v]^2ds + \frac{h_e}{\alpha} \int_e a(x)\{ \partial_\nu v\}^2ds\right)\\
=&  \sum_{K\in \mathcal{T}_h} \int_K a(x) |\nabla v|^2 dx - \frac{1}{\alpha}\left(\sum_{e\in \Gamma^0} + \frac{\tau}{2} \sum_{e\in \Gamma^\partial} \right)h_e\int_e a(x) \{ \partial_\nu v\}^2ds\\
& + \left(\sum_{e\in \Gamma^0} + \frac{\tau}{2} \sum_{e\in \Gamma^\partial} \right) \left( \frac{\beta_0 -\alpha }{h_e} \right)\int_e a(x)[v]^2ds
\ea
\eqs
for $0 < \alpha < \beta_0$.

Set
\bq\label{beta0}
\beta^* = \sup_{v \in \tilde V_h}\frac{\left(\sum_{e\in \Gamma^0} + \frac{\tau}{2} \sum_{e\in \Gamma^\partial} \right)h_e\int_e a(x) \{ \partial_\nu v\}^2ds}{\sum_{K\in \mathcal{T}_h} \int_K a(x) |\nabla v|^2 dx},
\eq
where $\tilde V_h:=\{v\in V_h |\quad v\not= \text{const}\}$. For given $a(x)$ positive, this quantity $\beta^*$ can be shown to be uniformly bounded.  In fact, it suffices to bound each local ratio by
$$
\frac{h_e\int_{\partial K} a(x) |\partial_\nu v|^2ds}{\int_K a(x) |\nabla v|^2 dx}
\leq \frac{\max_{x\in \partial K}  a(x)}{\min_{x\in K} a(x)} \frac{h_e \int_{\partial K}|\partial_\nu v|^2ds}{\int_K |\nabla v|^2 dx},
$$
in which the ratio without weight $a$ is known to be bounded by a constant depending only on $k$; see  \cite[Lemma 3.1]{L15}. Thus we have
\bqs
\ba
A(a(x);v,v) \geq  \left( 1- \frac{\beta^*}{\alpha}\right) \sum_{K\in \mathcal{T}_h} \int_K a(x) |\nabla v|^2 dx + \left( 1-\frac{\alpha}{\beta_0}\right)\left(\sum_{e\in \Gamma^0} + \frac{\tau}{2} \sum_{e\in \Gamma^\partial} \right)\int_e a(x)\frac{\beta_0}{h_e}[v]^2ds.
\ea
\eqs
Set
$\alpha = \sqrt{\beta_0\beta^*}$ and  $c_0=\left(1-\sqrt{\frac{\beta^*}{\beta_0}}\right)$, we obtain
$$
A(a(x); v,v) \geq c_0 \inf_{x\in \Omega}a(x) \|v\|_{\rm DG}^2
$$
for $\beta_0>\beta^*$.

(ii) Taking $\phi=w_h, \ \psi=u_{ht}$ in (\ref{ch_SemiDGN}), then (\ref{semieng}) follows immediately.
\end{proof}
\begin{rem} Here $\beta^*$ clearly depends on the choice of $a(x)$ unless it is a constant,  and the type of boundary conditions through $\tau$. The fact that $\beta^*$ is increasing in $\tau$ implies that for Neumann  boundary conditions it suffices to take a smaller $\beta^*$.
\end{rem}

\section{Time discretization}
\subsection{The IEQ reformulation}
The basic idea of the IEQ methodology \cite{YZWS17, YZ19} is to rewrite the energy functional into a quadratic form
\bq\label{eng}
E(u_h, U)=\frac{1}{2}A(\epsilon^2; u_h,u_h)+\int_{\Omega}U^2dx  =E(u_h)+B|\Omega|,
\eq
where
$$
U=\sqrt{F(u_h)+B}
$$
is well-defined when $B$ is chosen so that $F(u_h)+B>0$.
With the IEQ approach $U$ is updated by solving
$$
U_t =  \frac{1}{2}H(u_h) u_{ht},
$$
where
\begin{align}\label{ch_hw}
 H(w)= \frac{F'(w)}{\sqrt{F(w)+B}}.
\end{align}

For the semi-discrete DG scheme (\ref{ch_SemiDGN}), we follow \cite{LY19} where an IEQ-DG method was developed for the Swift-Hohenberg equation,
to consider the following system: find $(u_h, w_h) \in V_h  \times V_h $ and $U(x, t)$ such that
\begin{subequations}\label{SemiDG+}
\begin{align}
U_{t} =&  \frac{1}{2}H(u_h) u_{ht},\\
(u_{ht}, \phi) = & A(M(u_h); w_h,\phi) ,\\
(w_h, \psi) = & A(\epsilon^2; u_h,\psi)+\left(H(u_h)U,\psi \right),
\end{align}
\end{subequations}
for all $\phi, \psi \in V_h $, subject to initial data
$$
U(x, 0)=\sqrt{F(u_0(x))+B}, \quad u_h(x, 0)=\Pi u_0(x).
$$
Note that with the modified discrete energy (\ref{eng}) we still have the following
\bqs
\frac{d}{dt}E(u_h, U) = -A(M(u_h); w_h,w_h) \leq 0.
\eqs
We proceed to discretize (\ref{SemiDG+}) in time. 

\subsection{First order fully discrete IEQ-DG scheme}
Find  $(u^{n}_h, w_h^{n}) \in V_h  \times V_h $ and $U^n=U^n(x)$ such that
\begin{subequations}\label{ch_FPDGFull1st+}
\begin{align}
U^n_h= & \Pi U^n, \\
\frac{U^{n+1} - U_h^n}{\Delta t} = & \frac{1}{2}H(u_h^n) \frac{u_h^{n+1} - u_h^n}{\Delta t},\\
\left(  \frac{u_h^{n+1} - u_h^n}{\Delta t}, \phi \right)= & -A(M(u_h^n); w_h^{n+1}, \phi), \\
(w_h^{n+1},\psi) = & A(\epsilon^2; u_h^{n+1}, \psi)+\left( H(u_h^n)U^{n+1}, \psi \right),
\end{align}
\end{subequations}
for $\phi, \psi \in V_h $.

This scheme admits the following properties.
\begin{thm}\label{ch_firstorder+} Suppose that $ M_{\min} \leq M(u)\leq M_{\max}$ for $u$ under consideration, and set
\bq\label{beta0+}
\beta_0^*=\frac{M_{\max}}{M_{\min}} \sup_{v \in \tilde V_h}\frac{\left(\sum_{e\in \Gamma^0} + \frac{\tau}{2} \sum_{e\in \Gamma^\partial} \right)h_e\int_e  \{ \partial_\nu v\}^2ds}{\sum_{K\in \mathcal{T}_h} \int_K |\nabla v|^2 dx}.
\eq
If $\beta_0> \beta_0^*$,  scheme (\ref{ch_FPDGFull1st+}) admits a unique solution $(u_h^{n}, w_h^{n})$ for any $\Delta t>0$, and the solution $u_h^n$ satisfies the mass conservation, i.e.,
\bq\label{ch_ConMass}
\int_\Omega u_h^{n} dx = \int_\Omega u_h^{0} dx,
\eq
for any $n>0$.  Moreover, for $E^n := E(u_h^n, U_h^n)$ we have
\bq
\ba\label{ch_engdis1st+}
E^{n+1} \leq E(u_h^{n+1}, U^{n+1}) = &  E^n - \Delta t  A(M(u_h^n);w_h^{n+1},w_h^{n+1})\\
& -\frac{1}{2}A(\epsilon^2; u_h^{n+1}-u_h^{n},u_h^{n+1}-u_h^{n})-\|U^{n+1} - U_h^n\|^2,
\ea
\eq
independent of the size of $\Delta t$.
\end{thm}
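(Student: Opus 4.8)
The plan is to establish the three assertions—unique solvability, mass conservation, and the energy inequality—in that order, since each builds on the previous. For \textbf{unique solvability}, since the scheme is linear in the unknowns $(u_h^{n+1}, w_h^{n+1}, U^{n+1})$, it suffices to show that the only solution of the homogeneous system (with $u_h^n = 0$, $U_h^n = 0$) is the trivial one. I would first use equation (\ref{ch_FPDGFull1st+}b) to eliminate $U^{n+1}$, writing $U^{n+1} = \frac{1}{2} H(u_h^n) u_h^{n+1}$ in the homogeneous case, and substitute into (\ref{ch_FPDGFull1st+}d) to get $(w_h^{n+1}, \psi) = A(\epsilon^2; u_h^{n+1}, \psi) + \frac{1}{2}(H(u_h^n)^2 u_h^{n+1}, \psi)$. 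Then taking $\phi = w_h^{n+1}$ in (\ref{ch_FPDGFull1st+}c) and $\psi = u_h^{n+1}$ in (\ref{ch_FPDGFull1st+}d), and combining, one obtains $0 = \Delta t\, A(M(u_h^n); w_h^{n+1}, w_h^{n+1}) + A(\epsilon^2; u_h^{n+1}, u_h^{n+1}) + \frac{1}{2}\|H(u_h^n) u_h^{n+1}\|^2$. Since $\beta_0 > \beta_0^*$ guarantees via the Lemma (applied with $a(x) = M(u_h^n) \ge M_{\min}$ and with $a(x) = \epsilon^2$) that both bilinear forms are nonnegative, and $A(\epsilon^2; u_h^{n+1}, u_h^{n+1}) \ge c_0 \epsilon^2 \|u_h^{n+1}\|_{\rm DG}^2$, we conclude $\|u_h^{n+1}\|_{\rm DG} = 0$, hence $u_h^{n+1}$ is a constant; feeding this back shows $w_h^{n+1} = 0$ and $U^{n+1} = 0$ as well. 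One subtle point: $\|u_h^{n+1}\|_{\rm DG} = 0$ only forces $u_h^{n+1}$ to be piecewise constant with no jumps, i.e. a global constant, so I must separately argue this constant is zero—this follows because with $u_h^n = 0$, the constant must have zero mean by the mass-conservation identity derived next, forcing it to vanish.

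For \textbf{mass conservation}, I would take $\phi \equiv 1$ in (\ref{ch_FPDGFull1st+}c). The key observation is that $A(M(u_h^n); w_h^{n+1}, 1) = 0$: the volume term vanishes because $\nabla 1 = 0$, and the interface terms vanish because $[1] = 0$ on every interior edge and on periodic boundary edges, while for Neumann boundaries $\tau = 0$ kills the boundary sum entirely. Hence $\left( \frac{u_h^{n+1} - u_h^n}{\Delta t}, 1 \right) = 0$, which gives $\int_\Omega u_h^{n+1}\, dx = \int_\Omega u_h^n\, dx$, and (\ref{ch_ConMass}) follows by induction on $n$.

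For the \textbf{energy inequality}, this is the technical heart. I would take $\phi = w_h^{n+1}$ in (\ref{ch_FPDGFull1st+}c), $\psi = u_h^{n+1} - u_h^n$ in (\ref{ch_FPDGFull1st+}d), and use (\ref{ch_FPDGFull1st+}b) rewritten as $u_h^{n+1} - u_h^n = \frac{2}{H(u_h^n)}(U^{n+1} - U_h^n)$ to handle the nonlinear term: the term $(H(u_h^n) U^{n+1}, u_h^{n+1} - u_h^n) = 2(U^{n+1}, U^{n+1} - U_h^n)$. Adding the two equations and using the polarization identity $A(\epsilon^2; u_h^{n+1}, u_h^{n+1} - u_h^n) = \frac{1}{2}A(\epsilon^2; u_h^{n+1}, u_h^{n+1}) - \frac{1}{2}A(\epsilon^2; u_h^n, u_h^n) + \frac{1}{2}A(\epsilon^2; u_h^{n+1} - u_h^n, u_h^{n+1} - u_h^n)$ together with the scalar identity $2 b(b - a) = b^2 - a^2 + (b-a)^2$ applied to $(U^{n+1}, U^{n+1} - U_h^n)$, I obtain exactly
\[
E(u_h^{n+1}, U^{n+1}) = E(u_h^n, U_h^n) - \Delta t\, A(M(u_h^n); w_h^{n+1}, w_h^{n+1}) - \tfrac{1}{2}A(\epsilon^2; u_h^{n+1} - u_h^n, u_h^{n+1} - u_h^n) - \|U^{n+1} - U_h^n\|^2,
\]
where $E(u_h^n, U_h^n) = \frac12 A(\epsilon^2; u_h^n, u_h^n) + \|U_h^n\|^2 = E^n$. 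Finally, to get the leftmost inequality $E^{n+1} \le E(u_h^{n+1}, U^{n+1})$, I compare $E^{n+1} = \frac12 A(\epsilon^2; u_h^{n+1}, u_h^{n+1}) + \|U_h^{n+1}\|^2$ with $E(u_h^{n+1}, U^{n+1}) = \frac12 A(\epsilon^2; u_h^{n+1}, u_h^{n+1}) + \|U^{n+1}\|^2$; since $U_h^{n+1} = \Pi U^{n+1}$ is the $L^2$-projection, $\|U_h^{n+1}\| \le \|U^{n+1}\|$, which closes the argument. The main obstacle is bookkeeping: correctly pairing test functions so that the projection step (\ref{ch_FPDGFull1st+}a) only enters at the very last comparison while the update steps produce a clean telescoping structure, and making sure the nonnegativity of $A(M(u_h^n); \cdot, \cdot)$ is legitimately invoked (which needs $\beta_0 > \beta_0^*$ with the $M_{\max}/M_{\min}$ factor, since the coercivity constant in the Lemma depends on the ratio of bounds of $a(x)$).
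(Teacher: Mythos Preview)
Your proposal is correct and follows essentially the same route as the paper: mass conservation via $\phi=1$, uniqueness by testing the homogeneous linear system with $\phi=\tilde w$, $\psi=\tilde u$ and invoking coercivity from the Lemma, and the energy identity by taking $\phi=w_h^{n+1}$, $\psi=u_h^{n+1}-u_h^n$, using (\ref{ch_FPDGFull1st+}b) to convert the nonlinear term, and closing with the $L^2$-contraction property of $\Pi$. One cosmetic point: writing $u_h^{n+1}-u_h^n=\frac{2}{H(u_h^n)}(U^{n+1}-U_h^n)$ is unsafe since $H$ may vanish, but the identity you actually use, $(H(u_h^n)U^{n+1},u_h^{n+1}-u_h^n)=2(U^{n+1},U^{n+1}-U_h^n)$, follows directly by multiplying (\ref{ch_FPDGFull1st+}b) by $2U^{n+1}$ pointwise and integrating, so no division is needed.
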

\begin{proof} Taking $\phi=1$ in (\ref{ch_FPDGFull1st+}c) implies  (\ref{ch_ConMass}).
We next show the existence and uniqueness of (\ref{ch_FPDGFull1st+}) at each time step.
Substitution of (\ref{ch_FPDGFull1st+}b) into (\ref{ch_FPDGFull1st+}c)  with (\ref{ch_FPDGFull1st+}d) gives  the following linear system
\begin{subequations}\label{ch_FPDGFullAlg+}
\begin{align}
(u_h^{n+1}/\Delta t, \phi)+ A(M(u_h^n); w^{n+1}_h, \phi)= &(u_h^{n}/\Delta t, \phi),\\
A(\epsilon^2; u_h^{n+1}, \psi)+\left( 1/2H(u_h^n)^2u_h^{n+1}, \phi \right)-(w^{n+1}_h,\psi) = & \left( 1/2H(u_h^n)^2u_h^{n}, \phi \right)-(H(u_h^n)U_h^n, \psi).
\end{align}
\end{subequations}
It suffices to prove the uniqueness for this linear system.
Denoting $(\tilde u, \tilde w)$ the difference of two possible solutions of
(\ref{ch_FPDGFullAlg+}) for fixed $(u_h^n, w^n_h)$, so that
\begin{subequations} \label{FPDGFullAlg++}
\begin{align}
(\tilde u/\Delta t, \phi)+ A(M(u_h^n); \tilde w, \phi)= &0,\\
A(\epsilon^2; \tilde u, \psi)+\left( 1/2H(u_h^n)^2 \tilde u, \phi \right)-(\tilde w,\psi) = & 0.
\end{align}
\end{subequations}
Setting $\phi=\Delta t \tilde w, \psi=\tilde u$ and adding the two equations, we have
$$
\Delta t A(M(u_h^n); \tilde w, \tilde w)
  + A(\epsilon^2; \tilde u, \tilde u) + \left( 1/2H(u_h^n)^2\tilde u, \tilde u \right) =0.
$$
Using  (\ref{VCoer}) with $a=M(u_h^n)$ so that $\beta_0^*>\beta^*$, we have
\bqs
\ba
0 \geq & \Delta t M_{\min}  \|\tilde w\|_{DG}^2  + \epsilon^2 \|\tilde u\|_{DG}^2 + \frac{1}{2}\int_\Omega H(u_h^n)^2\tilde u^2 dx,
\ea
\eqs
which ensures that $\tilde u=const$ and $\tilde w=const$.
Then it follows $A(M(u_h^n); \tilde w, \phi)=A(\epsilon^2; \tilde u, \psi)=0$. Thus, (\ref{FPDGFullAlg++}a) is equivalent to
$$
(\tilde u, \phi) = 0, \quad \forall \phi \in V_h.
$$
We must have $\tilde u=0$.
In a similar fashion, $\tilde w=0$ follows from (\ref{FPDGFullAlg++}b). Hence the uniqueness for (\ref{ch_FPDGFullAlg+})
follows.


We next prove (\ref{ch_engdis1st+}). Taking $\phi=w_h^{n+1}$ in (\ref{ch_FPDGFull1st+}c),  $\psi=\frac{u_h^{n+1}-u_h^{n}}{\Delta t}$ in (\ref{ch_FPDGFull1st+}d) gives
\begin{align*}
- A(M(u_h^n); w_h^{n+1}, w_h^{n+1})  = A(\epsilon^2; u_h^{n+1}, \frac{u_h^{n+1}-u_h^{n}}{\Delta t})+\left( H(u_h^n)U^{n+1}, \frac{u_h^{n+1}-u_h^{n}}{\Delta t} \right).
\end{align*}
By (\ref{ch_FPDGFull1st+}b) and bilinearity of $A(\epsilon^2; \cdot, \cdot)$, the right hand side of the above equation gives
\begin{align*}
RHS = & \frac{1}{2\Delta t}\left( A(\epsilon^2; u_h^{n+1}, u_h^{n+1})- A(\epsilon^2; u_h^{n}, u_h^{n}) + A(\epsilon^2; u_h^{n+1}-u_h^{n}, u_h^{n+1}-u_h^{n}) \right) \\
&+\frac{1}{\Delta t} \left(\|U^{n+1}\|^2-\|U_h^n\|^2 + \| U^{n+1}-U_h^n\|^2 \right).
\end{align*}
Hence
\bq\label{ch_engdis1stO}
\begin{aligned}
E(u_h^{n+1}, U^{n+1}) = & E(u_h^n, U_h^n) - \Delta tA(M(u_h^n); w_h^{n+1},w_h^{n+1}) \\
&-\frac{1}{2}A(\epsilon^2; u_h^{n+1}-u_h^{n},u_h^{n+1}-u_h^{n})-\|U^{n+1} - U_h^n\|^2.
\end{aligned}
\eq
Implied by the fact that $\Pi$ is a contraction mapping in $L^2$,  we have
\bq\label{ch_engdis1stO1}
E(u_h^{n+1}, U_h^{n+1}) \leq E(u_h^{n+1}, U^{n+1}),
\eq
hence (\ref{ch_engdis1st+}) as desired.

\end{proof}

\begin{rem} Note that here $\beta_0>\beta_0^*$ serves only as a sufficient condition for stability.   For $M(u)=const$ and rectangular meshes,  $\beta_0^*$ can be more precisely estimated as $\beta_0^*=k^2$, see  \cite[Lemma 3.1]{L15}.  For general case, it suffices to add upon $k^2$ a factor of size $\max_K  \frac{\max_{x\in \partial K}  M(u_h^n)}{\min_{x\in K} M(u_h^n)}$, which approaches to one as the mesh is sufficiently refined. In our numerical examples we take $\beta_0^*= 3k^2$.
\end{rem}

\subsection{Second order fully discrete IEQ-DG scheme}
We first obtain $u_h^1, w_h^1$ and $U^1$ from the first order fully discrete IEQ-DG scheme (\ref{ch_FPDGFull1st+}).
We further use the second order backward differentiation formula (BDF2) for time discretization.  In other words, for $n\geq 1$,  
the second order fully discrete IEQ-DG scheme is to find $(u^{n+1}_h, w_h^{n+1}) \in V_h  \times V_h $ such that for $\forall \phi, \psi
\in V_h $,
\begin{subequations}\label{FPDGFull+}
	\begin{align}
	U_h^n= & \Pi U^n,\\
  \frac{3U^{n+1} - 4U_h^n + U_h^{n-1}}{2\Delta t}  =&  \frac{1}{2}H(u^{n,*}_h) \frac{3u_h^{n+1} - 4u_h^n+u_h^{n-1}}{2\Delta t},\\
	\left(  \frac{3u_h^{n+1} - 4u_h^n+u_h^{n-1}}{2\Delta t}, \phi \right)
	= & - A(M(u^{n, *}_h);w_h^{n+1}, \phi),\\
	(w_h^{n+1}, \psi) = & A(\epsilon^2;u_h^{n+1},\psi) + \left(H(u^{n,*}_h)U^{n+1},\psi \right),
	\end{align}
\end{subequations}
where $u^{n,*}_h$ is obtained using $u_h^{n-1}$ and $u^n_h$
\begin{align}\label{u8}
u^{n, *}_h=& 2u_h^n- u_h^{n-1}.
\end{align}
Here instead of  $u_h^{n+1}$ we use $u^{n, *}_h$ to avoid iteration steps in updating the numerical solution, while still maintaining second order accuracy in time.

To show the energy stability, we first present some useful identities.
\begin{lem}\label{bifor}
For any symmetric bilinear functional $\mathcal{A}(\cdot,\cdot)$, it follows
\begin{align*}
\mathcal{A}(\phi+\psi, \phi-\psi) & = \mathcal{A}(\phi, \phi)- \mathcal{A}(\psi, \psi),\\
2\mathcal{A}(\phi_1,3\phi_1-2\phi_2-\phi_3) & = \mathcal{A}(\phi_1,\phi_1)+\mathcal{A}(2\phi_1-\phi_2,2\phi_1-\phi_2)-\mathcal{A}(\phi_2,\phi_2)\\
& \qquad -\mathcal{A}(\phi_3,\phi_3)+\mathcal{A}(\phi_1-\phi_3,\phi_1-\phi_3).
\end{align*}
\end{lem}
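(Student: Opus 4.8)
The plan is to prove both identities by direct expansion, using nothing beyond the bilinearity of $\mathcal{A}$ and its symmetry $\mathcal{A}(\phi,\psi)=\mathcal{A}(\psi,\phi)$; no structure of the underlying space is needed. For the first identity I would simply write out $\mathcal{A}(\phi+\psi,\phi-\psi)=\mathcal{A}(\phi,\phi)-\mathcal{A}(\phi,\psi)+\mathcal{A}(\psi,\phi)-\mathcal{A}(\psi,\psi)$ and then use symmetry to cancel the two mixed terms, $-\mathcal{A}(\phi,\psi)+\mathcal{A}(\psi,\phi)=0$, which leaves exactly $\mathcal{A}(\phi,\phi)-\mathcal{A}(\psi,\psi)$.

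For the second identity the cleanest route is to reduce both sides to the same linear combination of the monomials $\mathcal{A}(\phi_i,\phi_j)$. On the left, bilinearity in the second argument gives $2\mathcal{A}(\phi_1,3\phi_1-2\phi_2-\phi_3)=6\mathcal{A}(\phi_1,\phi_1)-4\mathcal{A}(\phi_1,\phi_2)-2\mathcal{A}(\phi_1,\phi_3)$. On the right, I would expand $\mathcal{A}(2\phi_1-\phi_2,2\phi_1-\phi_2)=4\mathcal{A}(\phi_1,\phi_1)-4\mathcal{A}(\phi_1,\phi_2)+\mathcal{A}(\phi_2,\phi_2)$ and $\mathcal{A}(\phi_1-\phi_3,\phi_1-\phi_3)=\mathcal{A}(\phi_1,\phi_1)-2\mathcal{A}(\phi_1,\phi_3)+\mathcal{A}(\phi_3,\phi_3)$ (merging cross terms via symmetry), add in $\mathcal{A}(\phi_1,\phi_1)$, $-\mathcal{A}(\phi_2,\phi_2)$ and $-\mathcal{A}(\phi_3,\phi_3)$, and collect: the $\mathcal{A}(\phi_2,\phi_2)$ and $\mathcal{A}(\phi_3,\phi_3)$ contributions cancel, the $\mathcal{A}(\phi_1,\phi_1)$ coefficients add up to $1+4+1=6$, and one is left with $6\mathcal{A}(\phi_1,\phi_1)-4\mathcal{A}(\phi_1,\phi_2)-2\mathcal{A}(\phi_1,\phi_3)$, matching the left side.

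Alternatively — and worth recording, since it exposes the telescoping structure that will be used in the BDF2 energy estimate — the second identity can be deduced from the first. Applying $\mathcal{A}(a,a)-\mathcal{A}(b,b)=\mathcal{A}(a+b,a-b)$ with $a=2\phi_1-\phi_2$, $b=\phi_2$ yields $\mathcal{A}(2\phi_1-\phi_2,2\phi_1-\phi_2)-\mathcal{A}(\phi_2,\phi_2)=4\mathcal{A}(\phi_1,\phi_1-\phi_2)$, and with $a=\phi_1-\phi_3$, $b=\phi_3$ yields $\mathcal{A}(\phi_1-\phi_3,\phi_1-\phi_3)-\mathcal{A}(\phi_3,\phi_3)=\mathcal{A}(\phi_1,\phi_1-2\phi_3)$; substituting both into the right-hand side and using bilinearity once more reproduces $2\mathcal{A}(\phi_1,3\phi_1-2\phi_2-\phi_3)$.

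I do not expect any genuine obstacle: the statement is a purely formal consequence of bilinearity together with symmetry, and the only thing to be careful about is the bookkeeping of coefficients and the correct use of symmetry when combining mixed terms such as $\mathcal{A}(\phi_1,\phi_2)$ and $\mathcal{A}(\phi_2,\phi_1)$. Accordingly I would keep the written proof to a couple of lines of expansion for each identity.
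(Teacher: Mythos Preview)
Your proposal is correct; both the direct expansion and your alternative derivation are valid. The paper's own proof follows your alternative route: it decomposes $2\mathcal{A}(\phi_1,3\phi_1-2\phi_2-\phi_3)=\mathcal{A}(\phi_1,\phi_1)+\mathcal{A}(2\phi_1,2\phi_1-2\phi_2)+\mathcal{A}(\phi_1,\phi_1-2\phi_3)$ and then applies the first identity (with exactly your choices $a=2\phi_1-\phi_2,\,b=\phi_2$ and $a=\phi_1-\phi_3,\,b=\phi_3$) to rewrite the last two terms, rather than expanding both sides into monomials $\mathcal{A}(\phi_i,\phi_j)$.
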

\begin{proof} The first identity follows from a direct calculation using the symmetry of the bilinear functional $\mathcal{A}(\cdot,\cdot)$.
The second follows from a proper decomposition and using the first identity, that goes as follows:
\begin{equation*}
\begin{aligned}
2\mathcal{A}(\phi_1,3\phi_1-2\phi_2-\phi_3)= &\mathcal{A}(\phi_1,6\phi_1-4\phi_2-2\phi_3) \\
 =&  \mathcal{A}(\phi_1,\phi_1+4(\phi_1-\phi_2)+\phi_1-2\phi_3) \\
= & \mathcal{A}(\phi_1,\phi_1) + \mathcal{A}(2\phi_1,2\phi_1-2\phi_2)+\mathcal{A}(\phi_1,\phi_1-2\phi_3)\\
= & \mathcal{A}(\phi_1,\phi_1) + \mathcal{A}(2\phi_1-\phi_2,2\phi_1-\phi_2)-\mathcal{A}(\phi_2,\phi_2)\\
& +\mathcal{A}(\phi_1-\phi_3,\phi_1-\phi_3)-\mathcal{A}(\phi_3,\phi_3).
\end{aligned}
\end{equation*}
\end{proof}
For the scheme (\ref{FPDGFull+}), we have
\begin{thm}\label{thm2nd} Under the assumption in Theorem \ref{ch_firstorder+}, we set $\beta_0^*$ as defined in (\ref{beta0+}).
If $\beta_0>\beta_0^*$, the second order fully discrete DG scheme (\ref{FPDGFull+}) admits a unique solution $(u_h^{n+1}, w_h^{n+1})$, and the solution $u_h^n$ satisfies the mass conservation (\ref{ch_ConMass}) for any $n>0$.  Moreover, for any $\Delta t >0$ it follows
\bq\label{engdis}
\bar{E}^{n+1} - \bar{E}^n \leq - \Delta t A(M(u^{n, *}_h);w_h^{n+1},w_h^{n+1}) \leq 0,
\eq
where the modified energy is defined by
\bq\label{menergy}
\bar{E}^n=\frac{E(u_h^n,U_h^n)+E(u_h^{n,*},U_h^{n,*})}{2},
\eq
with
$$
U_h^{n,*}=2U_h^{n}-U_h^{n-1}.
$$
\end{thm}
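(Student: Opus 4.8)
The plan is to follow the same three-part structure used for Theorem \ref{ch_firstorder+}: mass conservation, existence/uniqueness of the linear system, and the discrete energy law, with the BDF2 bookkeeping carried by Lemma \ref{bifor}. Mass conservation is immediate: taking $\phi = 1$ in (\ref{FPDGFull+}c) kills the right-hand side since $A(\cdot;w_h^{n+1},1)=0$, giving $\int_\Omega(3u_h^{n+1}-4u_h^n+u_h^{n-1})\,dx=0$, and induction with the first-order starting step yields (\ref{ch_ConMass}). For existence and uniqueness, I would substitute (\ref{FPDGFull+}b) into (\ref{FPDGFull+}d) to eliminate $U^{n+1}$, exactly as in (\ref{ch_FPDGFullAlg+}), reducing the scheme to a square linear system in $(u_h^{n+1},w_h^{n+1})$; then take the difference $(\tilde u,\tilde w)$ of two solutions, test the $u$-equation with $\frac{2\Delta t}{3}\tilde w$ and the $w$-equation with $\tilde u$, add, and obtain
\[
\frac{2\Delta t}{3}A(M(u_h^{n,*});\tilde w,\tilde w)+A(\epsilon^2;\tilde u,\tilde u)+\frac{1}{2}\int_\Omega H(u_h^{n,*})^2\tilde u^2\,dx=0.
\]
Since $\beta_0>\beta_0^*\geq\beta^*$ for $a=M(u_h^{n,*})$, coercivity (\ref{VCoer}) forces $\tilde u=\tilde w=\mathrm{const}$, and then feeding back into (\ref{FPDGFull+}c) gives $(\tilde u,\phi)=0$ so $\tilde u=0$, and similarly $\tilde w=0$.

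For the energy law, the key is to test (\ref{FPDGFull+}c) with $\phi=w_h^{n+1}$ and (\ref{FPDGFull+}d) with $\psi=\frac{3u_h^{n+1}-4u_h^n+u_h^{n-1}}{2\Delta t}$, add, and use (\ref{FPDGFull+}b) to rewrite the $H(u_h^{n,*})U^{n+1}$ term as $(U^{n+1}, 3U^{n+1}-4U_h^n+U_h^{n-1})/(2\Delta t)$ times appropriate constants — i.e.\ the same BDF2 combination appears on both the $A(\epsilon^2;\cdot,\cdot)$ slot (with $\phi_1=u_h^{n+1}$, $\phi_2=u_h^n$, $\phi_3=u_h^{n-1}$) and the $\|\cdot\|^2$ slot (with $\phi_1=U^{n+1}$, etc.). Here I would invoke the second identity in Lemma \ref{bifor} with $\mathcal{A}=A(\epsilon^2;\cdot,\cdot)$ and with $\mathcal{A}(\cdot,\cdot)=(\cdot,\cdot)_{L^2}$ to convert $2A(\epsilon^2;u_h^{n+1},3u_h^{n+1}-4u_h^n+u_h^{n-1})$ and $2(U^{n+1},3U^{n+1}-4U_h^n+U_h^{n-1})$ into telescoping differences of the modified quantities $E(u_h^{n+1},U^{n+1})$ and $E(u_h^{n,*},U_h^{n,*})$ plus a nonnegative remainder $A(\epsilon^2;u_h^{n+1}-u_h^{n-1},u_h^{n+1}-u_h^{n-1})+\|U^{n+1}-U_h^{n-1}\|^2\geq0$; note $u_h^{n,*}=2u_h^n-u_h^{n-1}$ is precisely the $2\phi_1-\phi_2$-type term that Lemma \ref{bifor} produces (after relabeling), which is why the modified energy (\ref{menergy}) takes that form. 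This produces
\[
\bar E^{*,n+1}-\bar E^n \leq -\Delta t\,A(M(u_h^{n,*});w_h^{n+1},w_h^{n+1}),
\]
where $\bar E^{*,n+1}$ is built from $E(u_h^{n+1},U^{n+1})$; finally the $L^2$-contraction of $\Pi$ gives $E(u_h^{n+1},U_h^{n+1})\leq E(u_h^{n+1},U^{n+1})$ (as in (\ref{ch_engdis1stO1})), and the analogous inequality for the $U_h^{n,*}$ piece, upgrading $\bar E^{*,n+1}$ to $\bar E^{n+1}$ and yielding (\ref{engdis}).

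\textbf{Main obstacle.} The routine parts (mass, uniqueness) mirror Theorem \ref{ch_firstorder+} verbatim. The delicate step is the algebraic reorganization of the BDF2 terms into the telescoping form $\bar E^{n+1}-\bar E^n$: one must choose the right instance of Lemma \ref{bifor} (the second identity, applied with the correct $\phi_i$) and verify that the leftover cross terms assemble exactly into $E(u_h^{n,*},U_h^{n,*})$ with the factor $1/2$ in (\ref{menergy}), and that the remainder term $A(\epsilon^2;u_h^{n+1}-u_h^{n-1},\cdot)+\|U^{n+1}-U_h^{n-1}\|^2$ is genuinely discarded with the correct sign. Matching $U_h^{n,*}=2U_h^n-U_h^{n-1}$ against the projected auxiliary variable (rather than $U^n$) requires a little care since the identity naturally generates terms in $U^{n}$, $U^{n-1}$ while the scheme only stores $U_h^n=\Pi U^n$; one resolves this by applying Lemma \ref{bifor} with the $L^2$-inner product directly to the stored quantities, which is consistent because (\ref{FPDGFull+}b) is an identity in $V_h$ after projection. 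Everything else is bilinearity and the already-established coercivity estimate.
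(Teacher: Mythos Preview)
Your overall strategy matches the paper's: mass conservation by testing with $\phi=1$ and induction, uniqueness via coercivity on the homogeneous difference system, and the energy law through Lemma~\ref{bifor}. But your application of Lemma~\ref{bifor} is miscalibrated and, as written, does not close. With your choice $\phi_1=u_h^{n+1}$, $\phi_2=u_h^n$, $\phi_3=u_h^{n-1}$ the left side of the second identity is $2\mathcal{A}(u_h^{n+1},\,3u_h^{n+1}-2u_h^n-u_h^{n-1})$, which is \emph{not} the BDF2 increment $3u_h^{n+1}-4u_h^n+u_h^{n-1}$ that the scheme actually delivers. The paper instead takes $\phi_3=u_h^{n,*}=2u_h^n-u_h^{n-1}$ (and $\phi_3=U_h^{n,*}$ on the $L^2$ side), so that $3\phi_1-2\phi_2-\phi_3=3u_h^{n+1}-4u_h^n+u_h^{n-1}$ exactly. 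With this correct choice the output $2\phi_1-\phi_2$ is $u_h^{n+1,*}=2u_h^{n+1}-u_h^n$, the negative terms are $-\mathcal{A}(u_h^n,u_h^n)-\mathcal{A}(u_h^{n,*},u_h^{n,*})$, and these telescope precisely against $\bar E^n$ in (\ref{menergy}); the nonnegative remainder is $A(\epsilon^2;u_h^{n+1}-u_h^{n,*},u_h^{n+1}-u_h^{n,*})+\|U^{n+1}-U_h^{n,*}\|^2$, not the $u_h^{n+1}-u_h^{n-1}$ version you wrote. In short, $u_h^{n,*}$ enters the identity as the \emph{input} $\phi_3$, not as the output $2\phi_1-\phi_2$; this is exactly what makes both the LHS match the scheme and the RHS match the definition of $\bar E^n$.

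One further point on the projection step: after the identity you need two inequalities, $\|U_h^{n+1}\|\le\|U^{n+1}\|$ and $\|2U_h^{n+1}-U_h^n\|\le\|2U^{n+1}-U_h^n\|$. The second holds because $U_h^n\in V_h$ already, so $\Pi(2U^{n+1}-U_h^n)=2U_h^{n+1}-U_h^n$ and the $L^2$-contraction applies. Your sketch alludes to this but it is worth writing both, since together they are what bounds the left side of the resulting equality from below by $2\bar E^{n+1}$.
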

\begin{proof}
We first prove (\ref{engdis}).
Taking $\phi=2\Delta t w_h^{n+1}$ in (\ref{FPDGFull+}c) 
gives
\bqs
\ba
- 2\Delta tA(M(u^{n, *}_h);w_h^{n+1}, w_h^{n+1})  = & (w_h^{n+1}, 3u_h^{n+1} - 4u_h^n+u_h^{n-1})
\\
\text{using} \; (\ref{FPDGFull+}d) \quad =& A(\epsilon^2;u_h^{n+1}, \psi) +
\left(H(u^{n,*}_h)U^{n+1}, \psi \right) \quad \psi:=3u_h^{n+1} - 4u_h^n+u_h^{n-1}\\
\text{using} \; (\ref{FPDGFull+}b) \quad =& A(\epsilon^2;u_h^{n+1}, 3u_h^{n+1} - 4u_h^n+u_h^{n-1}) +\left(3U^{n+1} - 4U_h^n + U_h^{n-1}, 2U^{n+1} \right)\\
= &  A(\epsilon^2;u_h^{n+1}, 3u_h^{n+1} - 2u_h^n - u_h^{n,*}) +\left(3U^{n+1} - 2U_h^n - U_h^{n,*}, 2U^{n+1} \right).
\ea
\eqs
Both $A(\epsilon^2; \cdot, \cdot)$ and $(\cdot, \cdot)$ are symmetric,  by Lemma \ref{bifor} we have
\bqs
\ba
 A(\epsilon^2;u_h^{n+1}, 3u_h^{n+1} - 2u_h^n - u_h^{n,*})= & \frac{1}{2} \left[ A(\epsilon^2;u_h^{n+1}, u_h^{n+1}) + A(\epsilon^2;u_h^{n+1,*}, u_h^{n+1,*} ) -A(\epsilon^2;u_h^{n}, u_h^{n}) \right. \\
& \left.- A(\epsilon^2;u_h^{n,*}, u_h^{n,*} )+ A(\epsilon^2;u_h^{n+1}-u_h^{n,*},u_h^{n+1}-u_h^{n,*}) \right],\\
 \left(3U^{n+1} - 2U_h^n - U_h^{n,*}, 2U^{n+1} \right)=& \|U^{n+1}\|^2+\|2U^{n+1}-U_h^{n}\|^2-\|U_h^{n}\|^2-\|U_h^{n,*}\|^2 + \|U^{n+1}-U_h^{n,*}\|^2.
\ea
\eqs
Regrouping, we obtain
\begin{equation}\label{engdis2nd+}
\ba
& \frac{1}{2} \left[ A(\epsilon^2;u_h^{n+1}, u_h^{n+1}) + A(\epsilon^2;u_h^{n+1,*}, u_h^{n+1,*} ) \right]+\|U^{n+1}\|^2+\|2U^{n+1}-U_h^{n}\|^2 \\
& =  2\bar{E}^n - 2\Delta t A(M(u^{n, *}_h);w_h^{n+1},w_h^{n+1}) -\frac{1}{2} A(\epsilon^2;u_h^{n+1}-u_h^{n,*},u_h^{n+1}-u_h^{n,*})-\|U^{n+1}-U_h^{n,*}\|^2\\
& \leq 2\bar{E}^n - 2\Delta t A(M(u^{n, *}_h);w_h^{n+1},w_h^{n+1}).
\ea
\end{equation}
Further use of the fact that $\Pi$ is a contraction mapping in $L^2$, we have
$$
\|U_h^{n+1}\|^2\leq\|U^{n+1}\|^2, \quad \|2U_h^{n+1}-U_h^{n}\|^2\leq \|2U^{n+1}-U_h^{n}\|^2.
$$
Then the left hand side of (\ref{engdis2nd+}) is bounded below by $2\bar{E}^{n+1}$, thus (\ref{engdis}) follows.

Similar to the proof of Theorem \ref{ch_firstorder+}, the existence and uniqueness of the scheme (\ref{FPDGFull+}) is equivalent to showing the uniqueness of $u_h^{n+1}, w_h^{n+1}$ given $u_h^i, w_h^i, U^i$ with $i=n, n-1$.

Let $(\tilde u, \tilde w, \tilde U)$ be the difference of two such solutions, then
\begin{subequations}\label{FPDGFull+diff}
	\begin{align}
  \tilde U   =&  \frac{1}{2}H(u^{n,*}_h) \tilde u,\\
	\left(3\tilde u, \phi \right)
	= & - 2\Delta t A(M(u^{n, *}_h);\tilde w, \phi),\\
	(\tilde w, \psi) = & A(\epsilon^2;\tilde u,\psi) + \left(H(u^{n,*}_h)\tilde U,\psi \right).
	\end{align}
\end{subequations}
Setting $\phi= \tilde w, \ \psi=3\tilde u$,  and subtracting (\ref{FPDGFull+diff}b) from (\ref{FPDGFull+diff}c), it follows
$$
2\Delta tA(M(u^{n, *}_h);\tilde w, \tilde w) + 3A(\epsilon^2;\tilde u, \tilde u)+6\|\tilde U\|^2=0,
$$
where (\ref{FPDGFull+diff}a) has been used to simplify the third term.  By (\ref{VCoer}) with $a=M(u_h^n)$, we have
$\beta_0>\beta_0^*>\beta^*$, hence
$$
2 \Delta t M_{\min}  \|\tilde w\|_{DG}^2 +3\epsilon^2 \|\tilde u\|^2_{DG}+6\|\tilde U\|^2\leq 0,
$$
which ensures that $\tilde u=const$, $\tilde w=const$ and $\tilde U=0$.
Thus, using (\ref{FPDGFull+diff}) again, we must have $\tilde u=\tilde w=0$. Thus leads to the existence and uniqueness of the scheme (\ref{FPDGFull+}).

Taking $\phi=1$ in (\ref{FPDGFull+}c),
it follows
\bq\label{ch_ConMass2nd}
\int_\Omega u_h^{n+1} dx = \frac{1}{3}\int_\Omega  4u_h^n-u_h^{n-1} dx.
\eq
From Theorem \ref{ch_firstorder+}, we have
\bq\label{ch_ConMass1}
\int_\Omega u_h^{1} dx = \int_\Omega u_h^{0} dx,
\eq
which when combined with (\ref{ch_ConMass2nd}) gives the mass conservation (\ref{ch_ConMass}).
\end{proof}

\subsection{Algorithms}
%

The details related to the schemes implementation are summarized in the following algorithms.

\subsubsection{Algorithm for the first order fully discrete IEQ-DG scheme (\ref{ch_FPDGFull1st+})}\label{algorithm1st}
\begin{itemize}
  \item Step 1 (Initialization) From the given initial data $u_0(x)$
  \begin{enumerate}
    \item Generate $u_h^0 =\Pi u_0(x) \in V_h $; 
    \item Generate $U^0= \sqrt{F(u_0(x)) +B}$, where $B$ is a priori chosen so that $\inf (F(v)+B)>0$.
  \end{enumerate}
  \item Step 2 (Evolution)
  \begin{enumerate}
  \item Project $U^n$ back into $V_h$, namely compute $U^n_h=\Pi U^n$;
    \item Solve the linear system (\ref{ch_FPDGFullAlg+}) for $u_h^{n+1}, w_h^{n+1}$;
      \item Update $U^{n+1}$ using (\ref{ch_FPDGFull1st+}b), then return to (1) in Step 2.
  \end{enumerate}
\end{itemize}

\subsubsection{Algorithm for the second order fully discrete IEQ-DG scheme (\ref{FPDGFull+})}
\begin{itemize}
  \item Step 1 (Initialization) From the given initial data $u_0(x)$
  \begin{enumerate}
    \item Generate $u_h^0 =\Pi u_0(x) \in V_h $; 
    \item Generate $U^0= \sqrt{F(u_0(x)) +B}$, where $B$ is a priori chosen so that $\inf (F(v)+B)>0$; and
    \item Solve for $u_h^1, w_h^1$ and $U^1$ for $\forall x \in S$ through Algorithm \ref{algorithm1st} for the first order fully discrete IEQ-DG scheme (\ref{ch_FPDGFull1st+}).
  \end{enumerate}
  \item Step 2 (Evolution)
  \begin{enumerate}
  \item Project $U^n$ back into $V_h$, namely compute $U^n_h=\Pi U^n$;
    \item Solve the linear system for $u_h^{n+1}, w_h^{n+1}$,
    \bqs
    \ba
    \left(  \frac{3u_h^{n+1}}{2\Delta t}, \phi \right)+ A(M(u^{n, *}_h);w_h^{n+1}, \phi)
	= & \left(  \frac{4u_h^n-u_h^{n-1}}{2\Delta t}, \phi \right),\\
	  A(\epsilon^2;u_h^{n+1},\psi) + \frac{1}{2}\left(\left(H(u^{n,*}_h)\right)^2u_h^{n+1},\psi \right)-(w_h^{n+1}, \psi) =& RHS,
    \ea
    \eqs
    where $RHS=-\left( H(u^{n,*}_h)\frac{4U_h^n-U_h^{n-1}}{3}- \frac{1}{2} \left(H(u^{n,*}_h)\right)^2\frac{4u_h^n-u_h^{n-1}}{3} , \psi \right)$
      \item Update $U^{n+1}$ through (\ref{FPDGFull+}b), i.e.,
      $$
      U^{n+1}=\frac{1}{2}H(u^{n,*}_h)u_h^{n+1}+\left( \frac{4U_h^n-U_h^{n-1}}{3}- \frac{1}{2}H(u^{n,*}_h)\frac{4u_h^n-u_h^{n-1}}{3} \right),
      $$
      then return to (1) in Step 2.
  \end{enumerate}
\end{itemize}
\begin{rem} Higher order (in time) IEQ discretization is possible. We omit the details here due to space limitation.
Interested readers are referred to  \cite{GZW19}  for some arbitrarily high-order linear schemes for gradient flow models.
\end{rem}

\section{Extensions}\label{sec4}
It is known that solving the Cahn-Hilliard equation with degenerate mobility and/or logarithmic potential is more difficult since
it requires a point-wise control of the numerical solution.    We discuss how our schemes can be applied
by a proper modification.


\subsection{Mobility} Though the mobility is often taken as a constant for simplicity,  a thermodynamically reasonable choice is actually the degenerate mobility $M(u)=u(1-u)$ (see e.g., \cite{EG96}).   There is hope that solutions which initially take values in the interval $[0, 1]$ will do so for all positive time (which is not true for fourth-order parabolic equations without degeneracy). We remark that only values in the interval $[0, 1]$ are physically meaningful. Such degeneracy leads to numerical difficulties.

Here, we follow \cite{EG96, BBG99} by considering the modified mobility
\bq\label{mobility}
\widetilde{M}(u) =
\left \{
\begin{array}{rl}
	M(\sigma) \quad & u\leq\sigma, \\
	M(u)  \quad &\sigma < u < 1-\sigma, \\
	M(1-\sigma)  \quad & u\geq1-\sigma,\\
\end{array}
\right.
\eq
It is obvious that for given $\sigma$,
$$
\widetilde{M}(u) \geq M_{\min} >0,
$$
and it is well-defined for $u \in (-\infty, \infty)$. Numerically, we apply our scheme using this modified mobility with a small $\sigma$.



\subsection{Flory-Huggins potential}
A practical choice for the potential is the Logarithmic Flory-Huggins function \cite{BE91, CH58, CE92}
\bq\label{fhp}
F(v) = \frac{\theta}{2}\left( v \ln v +(1-v) \ln (1-v)\right) + \frac{\theta_c}{2} v(1-v), \quad  \ v \in [0,1],
\eq
where $\theta, \theta_c>0$ are physical parameters.  This function is non-convex with double wells for $\theta_c>2\theta$,  and it only has a single well and admits only a single phase  for $\theta_c\leq 2\theta$ \cite{WKG06}.

The domain of the logarithmic potential (\ref{fhp}) is  $(0,1)$, which requires the numerical solution
be strictly inside $(0,1)$.  For some numerical schemes, such solution bounds can be established (see, e.g.,  \cite{DD95, EG96, MZ04, CE92, CWWW19}).

For high order DG schemes it is rather difficult to preserve the numerical solution within $(0, 1)$. We choose to  regularize the logarithmic Flory-Huggins potential (\ref{fhp}) by extending its domain from $(0,1)$ to $(-\infty, \infty)$.
Such regularization technique is commonly used to remove the numerical overflow; see, e.g., \cite{CE92, BB95, EG96, BB99, YZ19}. Specifically, it can be  replaced by the twice continuously differentiable function
\bqs\label{rfhp}
\widetilde{F}(v) =
\left \{
\begin{array}{rl}
	\frac{\theta}{2} \left(v \ln v +(1-v) \ln \sigma + \frac{(1-v)^2}{2\sigma} - \frac{\sigma}{2} \right)+ \frac{\theta_c}{2} v(1-v),  \quad & v \geq 1-\sigma, \\
	\frac{\theta}{2}\left( v \ln v +(1-v) \ln (1-v)\right) + \frac{\theta_c}{2} v(1-v), \quad &  \sigma < v < 1- \sigma,\\
	\frac{\theta}{2} \left((1-v) \ln (1-v) +v\ln \sigma + \frac{v^2}{2\sigma} - \frac{\sigma}{2} \right)+ \frac{\theta_c}{2} v(1-v),  \quad & v \leq \sigma,\\
\end{array}
\right.
\eqs
and thus $\widetilde{F}(v)$ is well defined for $v \in (-\infty, \infty)$.
It was argued in \cite{EG96} that the solution with regularized $\widetilde{M}(u)$ and $\widetilde{F}(u)$ converges to the solution to the original problem as $\sigma \to 0$. 
This treatment has been applied in numerical simulations, for example in \cite{BBG99}. In this paper, we apply our IEQ-DG schemes to problems formulated with the modified mobility and the regularized potential.

\section{Numerical examples}\label{sec5}
In this section, we will carry out several numerical tests in both 1D and 2D to demonstrate both temporal and spatial accuracy of the IEQ-DG schemes (\ref{ch_FPDGFull1st+}) and (\ref{FPDGFull+}), the mass conservation and energy dissipation properties. For the spatial accuracy, we will choose $\Delta t$ sufficiently small such that the spatial discretization error is dominant. Likewise, for the temporal accuracy, we will set spatial meshes sufficiently refined such that temporal discretization error is dominant. In the following numerical examples, the parameter $\beta_0=k^2+0.5k$ for problems with constant mobility and $\beta_0=3k^2+0.5k$ for other cases. The parameter $B=1$ as default unless specified.

\begin{example}\label{exam1dp} (1D spatial accuracy test)
Consider the Cahn-Hilliard equation (\ref{CH}) with $M=1$ and double-well potential $F(u)=\frac{1}{4}(u^2-1)^2$ in $\Omega = [0, 2\pi]$ with periodic boundary conditions.
Here, we follow Example 5.2 in \cite{SS17} by adding a source term
\begin{equation}\label{1dsource}
s(x,t) = -e^{-t}\sin x \left(3e^{-2t}\cos 2x  + 3e^{-2t}\cos^2 x + 1\right)
\end{equation}
to the Cahn-Hilliard equation (\ref{CH}), so that the exact solution is
\begin{equation}\label{1dexact}
u(x,t) = e^{-t}\sin x.
\end{equation}
We use the fully discrete IEQ-DG scheme (\ref{FPDGFull+}) with a term
$\left( s(x, t^{n+1}), \phi \right)$ added to the right hand side of  (\ref{FPDGFull+}c), and we test the DG scheme based on $P^k$ polynomials, with $k=1,2,3$. Both errors and orders of accuracy at $T=1$ are reported in Table \ref{tab1dp}. These results show that $(k+1)$th order of accuracy in both $L^2$ and $L^\infty$ norms are obtained.

\begin{table}[!htbp]\tabcolsep0.03in
\centering
\caption{1D $L^2, \; L^\infty$ errors and orders of accuracy at $T= 1$.}
\begin{tabular}[c]{||c|c|c|c|c|c|c|c|c|c||}
\hline
\multirow{2}{*}{$k$} & \multirow{2}{*}{$\Delta t$}&   \multirow{2}{*}{ } & N=10 & \multicolumn{2}{|c|}{N=20} & \multicolumn{2}{|c|}{N=40} & \multicolumn{2}{|c||}{N=80}  \\
\cline{4-10}
& & & error & error & order & error & order & error & order\\
\hline
\multirow{2}{*}{1}  & \multirow{2}{*}{1e-3} & $\|u-u_h\|_{L^2}$ &  3.09646e-02 & 8.07876e-03 & 1.94 & 2.03575e-03 & 1.99 & 5.10124e-04 & 2.00  \\
\cline{3-10}
 & & $\|u-u_h\|_{L^\infty}$  & 1.68270e-02 & 4.58886e-03 & 1.87 & 1.16103e-03	
 & 1.98 & 2.91198e-04 & 2.00  \\
\hline
\multirow{2}{*}{2}  & \multirow{2}{*}{1e-4} & $\|u-u_h\|_{L^2}$ & 3.56585e-04  & 4.17179e-05 & 3.10 & 5.12149e-06 & 3.03 & 6.35139e-07 & 3.01  \\
\cline{3-10}
 & & $\|u-u_h\|_{L^\infty}$  & 4.34261e-04 & 5.50274e-05 & 2.98 & 6.89646e-06 & 3.00 & 8.63616e-07 & 3.00  \\
\hline
\multirow{2}{*}{3} & \multirow{2}{*}{1e-5}  & $\|u-u_h\|_{L^2}$ & 2.57355e-05 & 1.66983e-06 & 3.95 & 1.05343e-07 & 3.99 & 6.62827e-09 & 3.99  \\
\cline{3-10}
 & & $\|u-u_h\|_{L^\infty}$   & 1.86828e-05 & 1.27898e-06 & 3.87 & 8.12040e-08 & 3.98 & 5.31052e-09 & 3.93  \\
\hline
\end{tabular}\label{tab1dp}
\end{table}

Note that the test error in Table \ref{tab1dp} is expected to be of order $O(\Delta t^2 + h^{k+1})$.  In order to observe the desired order in space, time step needs to be smaller so that $\Delta t \leq O(h^{k+1}/2)$ when $k$ gets larger. This comment applies to other cases as well.

\end{example}

\begin{example} (2D spatial accuracy test with constant mobility and double-well potential) For the Cahn-Hilliard equation (\ref{CH}) with $M(u)=1$ and double-well potential $F(u)=\frac{1}{4}(u^2-1)^2$ in $\Omega$ with appropriate boundary conditions, we add a source term
$$
s(x,y,t)=-\frac{w(x,y,t)}{4}+\frac{\epsilon^2 w(x,y,t)}{4} - \frac{3w(x,y,t)v(x,y,t) }{2}+\frac{3w(x,y,t)^3}{2}-\frac{w(x,y,t)}{2}
$$
to the right hand side of (\ref{CH}), where
\bqs
\ba
w(x,y,t)=&0.1e^{-t/4}\sin(x/2)\sin(y/2),\\
v(x,y,t)=&\left(0.1e^{-t/4}\cos(x/2)\sin(y/2)\right)^2+\left(0.1e^{-t/4}\sin(x/2)\cos(y/2)\right)^2,\\
\ea
\eqs
so that the exact solution is
$$
u(x,y,t) = w(x,y,t).
$$
Here the parameter $\epsilon=0.1$.
We test this example by DG scheme (\ref{ch_FPDGFull1st+}) with a term
$\left( s(x,y, t^{n+1}), \phi \right)$  added to the right hand side of (\ref{ch_FPDGFull1st+}c), and the DG scheme is based on polynomials of degree $k$ with $k=1, 2, 3$ on rectangular meshes.

\noindent\textbf{Test case 1.} (Periodic BC) In this test case, we take $\Omega=[0, 4\pi]^2$ and consider periodic boundary conditions.
Both errors and orders of accuracy at $T=0.01$ are reported in Table \ref{ch_tab2dtk1}. These results show that $(k+1)$th order of accuracy in both $L^2$ and $L^{\infty}$ are obtained.
\begin{table}[!htbp]\tabcolsep0.03in
\centering
\caption{2D $L^2, \; L^\infty$ errors at $T= 0.01$ with mesh $N\times N$.}
\begin{tabular}[c]{||c|c|c|c|c|c|c|c|c|c||}
\hline
\multirow{2}{*}{$k$} & \multirow{2}{*}{$\Delta t$}&   \multirow{2}{*}{ } & N=8 & \multicolumn{2}{|c|}{N=16} & \multicolumn{2}{|c|}{N=32} & \multicolumn{2}{|c||}{N=64}  \\
\cline{4-10}
& & & error & error & order & error & order & error & order\\
\hline
\multirow{2}{*}{1}  & \multirow{2}{*}{1e-3} & $\|u-u_h\|_{L^2}$ &  3.16822e-02 & 8.03463e-03 & 1.98 & 2.02336e-03 & 1.99 & 5.04024e-04 & 2.01  \\
\cline{3-10}
 & & $\|u-u_h\|_{L^\infty}$  & 1.38669e-02 & 3.74776e-03 & 1.89 & 9.59555e-04 & 1.97 & 2.40239e-04 & 2.00  \\
\hline
\hline
\multirow{2}{*}{2}  & \multirow{2}{*}{1e-4} & $\|u-u_h\|_{L^2}$ & 4.52729e-03 & 5.75115e-04 & 2.98 & 7.33589e-05 & 2.97 & 9.21578e-06 & 2.99  \\
\cline{3-10}
 & & $\|u-u_h\|_{L^\infty}$  & 2.32640e-03 & 2.95229e-04 & 2.98 & 4.06866e-05 & 2.86 & 5.26926e-06 & 2.95  \\
\hline
\hline
\multirow{2}{*}{3} & \multirow{2}{*}{1e-5}  & $\|u-u_h\|_{L^2}$ & 4.46670e-04 & 2.97916e-05 & 3.91 & 1.89117e-06 & 3.98 & 1.18585e-07 & 4.00  \\
\cline{3-10}
 & & $\|u-u_h\|_{L^\infty}$   & 3.20555e-04 & 1.80104e-05 & 4.15 & 1.02204e-06 & 4.14 & 6.16224e-08 & 4.05  \\
\hline
\end{tabular}\label{ch_tab2dtk1}
\end{table}

\noindent\textbf{Test case 2.} (Neumann BC) Considering $\Omega=[-\pi, 3\pi]^2$ with homogenous Neumann boundary conditions (\ref{BC}(ii)), both errors and orders of accuracy at $T=0.01$ are reported in Table \ref{tab2dtk2}. These results also show $(k+1)$th order of accuracy in both $L^2$ and $L^{\infty}$.

\begin{table}[!htbp]\tabcolsep0.03in
\centering
\caption{2D $L^2, \; L^\infty$ errors at $T= 0.01$ with mesh $N\times N$.}
\begin{tabular}[c]{||c|c|c|c|c|c|c|c|c|c||}
\hline
\multirow{2}{*}{$k$} & \multirow{2}{*}{$\Delta t$}&   \multirow{2}{*}{ } & N=8 & \multicolumn{2}{|c|}{N=16} & \multicolumn{2}{|c|}{N=32} & \multicolumn{2}{|c||}{N=64}  \\
\cline{4-10}
& & & error & error & order & error & order & error & order\\
\hline
\multirow{2}{*}{1}  & \multirow{2}{*}{1e-3} & $\|u-u_h\|_{L^2}$ &  3.16822e-02 & 8.03463e-03 & 1.98 & 2.02336e-03 & 1.99 & 5.04024e-04 & 2.01  \\
\cline{3-10}
 & & $\|u-u_h\|_{L^\infty}$  & 1.38669e-02 & 3.74776e-03 & 1.89 & 9.59555e-04 & 1.97 & 2.40239e-04 & 2.00  \\
\hline
\hline
\multirow{2}{*}{2}  & \multirow{2}{*}{1e-4} & $\|u-u_h\|_{L^2}$ &  4.52729e-03 & 5.75115e-04 & 2.98 & 7.33591e-05 & 2.97 & 9.18427e-06 & 3.00  \\
\cline{3-10}
 & & $\|u-u_h\|_{L^\infty}$  & 2.32640e-03 & 2.95229e-04 & 2.98 & 4.06885e-05 & 2.86 & 5.08342e-06 & 3.00  \\
\hline
\hline
\multirow{2}{*}{3} & \multirow{2}{*}{1e-5}  & $\|u-u_h\|_{L^2}$ &  4.46670e-04 & 2.97916e-05 & 3.91 & 1.89102e-06 & 3.98 & 1.18133e-07 & 4.00  \\
\cline{3-10}
 & & $\|u-u_h\|_{L^\infty}$   & 3.20555e-04 & 1.80104e-05 & 4.15 & 1.02406e-06 & 4.14 & 6.40520e-08 & 4.00  \\
\hline
\end{tabular}\label{tab2dtk2}
\end{table}

\end{example}

\begin{example}\label{exam2dcl} (2D spatial accuracy test with constant mobility and logarithmic potential) We consider the Cahn-Hilliard equation (\ref{CH}) with constant mobility $M(u)=1$, the logarithmic Flory-Huggins potential (\ref{fhp}) with $\theta=\theta_c=2$, the parameters $\epsilon=1$ and $B=10$. We add an appropriate source term $s(x,y,t)$ to the right hand side of (\ref{CH}) such that the exact solution is
$$
u(x,y,t)=\frac{1}{10}e^{-t/4}\sin(x/4)\sin(y/4)+\frac{1}{2}.
$$

We test this example by DG scheme (\ref{FPDGFull+}) with a term
$\left( s(x,y, t^{n+1}), \phi \right)$  added to the right hand side of (\ref{FPDGFull+}c), and the DG scheme is also based on polynomials of degree $k$ with $k=1, 2, 3$ on rectangular meshes.

\noindent\textbf{Test case 1.} (Periodic BC) In this test case, we take $\Omega=[0, 8\pi]^2$ and consider periodic boundary conditions.
Both errors and orders of accuracy at $T=0.01$ are reported in Table \ref{tab2dcln1}. These results show that $(k+1)$th order of accuracy in both $L^2$ and $L^{\infty}$ are obtained.

\begin{table}[!htbp]\tabcolsep0.03in
\centering
\caption{2D $L^2, \; L^\infty$ errors at $T= 0.01$ with mesh $N\times N$.}
\begin{tabular}[c]{||c|c|c|c|c|c|c|c|c|c||}
\hline
\multirow{2}{*}{$k$} & \multirow{2}{*}{$\Delta t$}&   \multirow{2}{*}{ } & N=8 & \multicolumn{2}{|c|}{N=16} & \multicolumn{2}{|c|}{N=32} & \multicolumn{2}{|c||}{N=64}  \\
\cline{4-10}
& & & error & error & order & error & order & error & order\\
\hline
\multirow{2}{*}{1}  & \multirow{2}{*}{1e-3} & $\|u-u_h\|_{L^2}$ &  6.34010e-02 & 1.62047e-02 & 1.97 & 4.04183e-03 & 2.00 & 1.00777e-03 & 2.00  \\
\cline{3-10}
 & & $\|u-u_h\|_{L^\infty}$  &  1.38744e-02 & 3.74858e-03 & 1.89 & 9.55245e-04 & 1.97 & 2.39967e-04 & 1.99  \\
\hline
\hline
\multirow{2}{*}{2}  & \multirow{2}{*}{1e-4} & $\|u-u_h\|_{L^2}$ & 9.39224e-03 & 1.18059e-03 & 2.99 & 1.46853e-04 & 3.01 & 1.83323e-05 & 3.00  \\
\cline{3-10}
 & & $\|u-u_h\|_{L^\infty}$  & 2.45698e-03 & 3.14143e-04 & 2.97 & 3.74571e-05 & 3.07 & 4.54860e-06 & 3.04  \\
\hline
\hline
\multirow{2}{*}{3} & \multirow{2}{*}{5e-6}  & $\|u-u_h\|_{L^2}$ &  1.09183e-03 & 6.72768e-05 & 4.02 & 4.09870e-06 & 4.04 & 2.54225e-07 & 4.01  \\
\cline{3-10}
 & & $\|u-u_h\|_{L^\infty}$   &  2.30167e-04 & 1.58541e-05 & 3.86 & 1.02039e-06 & 3.96 & 6.42180e-08 & 3.99  \\
\hline
\end{tabular}\label{tab2dcln1}
\end{table}

\noindent\textbf{Test case 2.} (Neumann BC) In this test case, we take $\Omega=[-2\pi, 2\pi]^2$ and consider Neumann boundary conditions.
Both errors and orders of accuracy at $T=0.01$ are reported in Table \ref{tab2dcln2}. These results show that $(k+1)$th order of accuracy in both $L^2$ and $L^{\infty}$ are obtained.

\begin{table}[!htbp]\tabcolsep0.03in
\centering
\caption{2D $L^2, \; L^\infty$ errors at $T= 0.01$ with mesh $N\times N$.}
\begin{tabular}[c]{||c|c|c|c|c|c|c|c|c|c||}
\hline
\multirow{2}{*}{$k$} & \multirow{2}{*}{$\Delta t$}&   \multirow{2}{*}{ } & N=8 & \multicolumn{2}{|c|}{N=16} & \multicolumn{2}{|c|}{N=32} & \multicolumn{2}{|c||}{N=64}  \\
\cline{4-10}
& & & error & error & order & error & order & error & order\\
\hline
\multirow{2}{*}{1}  & \multirow{2}{*}{1e-3} & $\|u-u_h\|_{L^2}$ &  1.27997e-01 & 3.55296e-02 & 1.85 & 9.55174e-03 & 1.90 & 2.13203e-03 & 2.16  \\
\cline{3-10}
 & & $\|u-u_h\|_{L^\infty}$  & 5.54685e-02 & 1.49970e-02 & 1.89 & 3.92808e-03 & 1.93 & 9.67492e-04 & 2.02  \\
\hline
\hline
\multirow{2}{*}{2}  & \multirow{2}{*}{1e-4} & $\|u-u_h\|_{L^2}$ & 1.87014e-02 & 2.35480e-03 & 2.99 & 2.94393e-04 & 3.00 & 3.69614e-05 & 2.99  \\
\cline{3-10}
 & & $\|u-u_h\|_{L^\infty}$  &  1.05130e-02 & 1.30587e-03 & 3.01 & 1.62919e-04 & 3.00 & 2.04032e-05 & 3.00  \\
\hline
\hline
\multirow{2}{*}{3} & \multirow{2}{*}{5e-6}  &  $\|u-u_h\|_{L^2}$ & 2.23974e-03 & 1.24902e-04 & 4.16 & 7.57937e-06 & 4.04 & 4.99051e-07 & 3.92  \\
\cline{3-10}
 & & $\|u-u_h\|_{L^\infty}$   & 1.47731e-03 & 8.22721e-05 & 4.17 & 4.36207e-06 & 4.24 & 3.29965e-07 & 3.72  \\
\hline
\end{tabular}\label{tab2dcln2}
\end{table}

\end{example}

\begin{example} (2D spatial accuracy test with degenerate mobility and logarithmic potential) We consider the Cahn-Hilliard equation (\ref{CH}) with degenerate mobility $M(u)=u(1-u)$, the logarithmic Flory-Huggins potential (\ref{fhp}) with $\theta=\theta_c=2$, the parameters $\epsilon=1$ and $B=10$. We add an appropriate source term $s(x,y,t)$ to the right hand side of (\ref{CH}) such that the exact solution is
$$
u(x,y,t)=\frac{2}{5}e^{-t/4}\sin(x/2)\sin(y/2)+\frac{1}{2}.
$$

We test this example by DG scheme (\ref{ch_FPDGFull1st+}) with a term
$\left( s(x,y, t^{n+1}), \phi \right)$  added to the right hand side of (\ref{ch_FPDGFull1st+}c), and the DG scheme is also based on polynomials of degree $k$ with $k=1, 2, 3$ on rectangular meshes.

\noindent\textbf{Test case 1.} (Periodic BC) In this test case, we take $\Omega=[0, 4\pi]^2$ and consider periodic boundary conditions.
Both errors and orders of accuracy at $T=0.01$ are reported in Table \ref{tab2ddln1}. These results show that $(k+1)$th order of accuracy in both $L^2$ and $L^{\infty}$ are obtained.

\begin{table}[!htbp]\tabcolsep0.03in
\centering
\caption{2D $L^2, \; L^\infty$ errors at $T= 0.01$ with mesh $N\times N$.}
\begin{tabular}[c]{||c|c|c|c|c|c|c|c|c|c||}
\hline
\multirow{2}{*}{$k$} & \multirow{2}{*}{$\Delta t$}&   \multirow{2}{*}{ } & N=8 & \multicolumn{2}{|c|}{N=16} & \multicolumn{2}{|c|}{N=32} & \multicolumn{2}{|c||}{N=64}  \\
\cline{4-10}
& & & error & error & order & error & order & error & order\\
\hline
\multirow{2}{*}{1}  & \multirow{2}{*}{1e-3} & $\|u-u_h\|_{L^2}$ &  1.31235e-01 & 3.29574e-02 & 1.99 & 8.27934e-03 & 1.99 & 2.08160e-03 & 1.99 \\
\cline{3-10}
 & & $\|u-u_h\|_{L^\infty}$  & 5.56010e-02 & 1.49372e-02 & 1.90 & 3.81584e-03 & 1.97 & 9.59510e-04 & 1.99  \\
\hline
\hline
\multirow{2}{*}{2}  & \multirow{2}{*}{1e-4} & $\|u-u_h\|_{L^2}$ & 2.05688e-02 & 2.51806e-03 & 3.03 & 3.05650e-04 & 3.04 & 3.79714e-05 & 3.01  \\
\cline{3-10}
 & & $\|u-u_h\|_{L^\infty}$  & 1.13806e-02 & 1.32194e-03 & 3.11 & 1.48147e-04 & 3.16 & 1.77820e-05 & 3.06  \\
\hline
\hline
\multirow{2}{*}{3} & \multirow{2}{*}{5e-6}  & $\|u-u_h\|_{L^2}$ &  2.82305e-03 & 1.48385e-04 & 4.25 & 8.56909e-06 & 4.11 & 5.53886e-07 & 3.95  \\
\cline{3-10}
 & & $\|u-u_h\|_{L^\infty}$   & 1.58906e-03 & 9.24779e-05 & 4.10 & 4.63277e-06 & 4.32 & 3.35743e-07 & 3.79  \\
\hline
\end{tabular}\label{tab2ddln1}
\end{table}

\noindent\textbf{Test case 2.} (Neumann BC) In this test case, we take $\Omega=[-\pi, 3\pi]^2$ and consider Neumann boundary conditions.
Both errors and orders of accuracy at $T=0.01$ are reported in Table \ref{tab2ddln2}. These results show that $(k+1)$th order of accuracy in both $L^2$ and $L^{\infty}$ is obtained.

\begin{table}[!htbp]\tabcolsep0.03in
\centering
\caption{2D $L^2, \; L^\infty$ errors at $T= 0.01$ with mesh $N\times N$.}
\begin{tabular}[c]{||c|c|c|c|c|c|c|c|c|c||}
\hline
\multirow{2}{*}{$k$} & \multirow{2}{*}{$\Delta t$}&   \multirow{2}{*}{ } & N=8 & \multicolumn{2}{|c|}{N=16} & \multicolumn{2}{|c|}{N=32} & \multicolumn{2}{|c||}{N=64}  \\
\cline{4-10}
& & & error & error & order & error & order & error & order\\
\hline
\multirow{2}{*}{1}  & \multirow{2}{*}{1e-3} & $\|u-u_h\|_{L^2}$ &  1.31235e-01 & 3.29574e-02 & 1.99 & 8.27934e-03 & 1.99 & 2.08160e-03 & 1.99  \\
\cline{3-10}
 & & $\|u-u_h\|_{L^\infty}$  & 5.56010e-02 & 1.49372e-02 & 1.90 & 3.81584e-03 & 1.97 & 9.59510e-04 & 1.99  \\
\hline
\hline
\multirow{2}{*}{2}  & \multirow{2}{*}{1e-4} & $\|u-u_h\|_{L^2}$ & 2.05688e-02 & 2.51806e-03 & 3.03 & 3.05650e-04 & 3.04 & 3.79715e-05 & 3.01  \\
\cline{3-10}
 & & $\|u-u_h\|_{L^\infty}$  & 1.13806e-02 & 1.32194e-03 & 3.11 & 1.48147e-04 & 3.16 & 1.77820e-05 & 3.06  \\
\hline
\hline
\multirow{2}{*}{3} & \multirow{2}{*}{5e-6}  &  $\|u-u_h\|_{L^2}$ &  2.82305e-03 & 1.48385e-04 & 4.25 & 8.56909e-06 & 4.11 & 5.59243e-07 & 3.94  \\
\cline{3-10}
 & & $\|u-u_h\|_{L^\infty}$   & 1.58906e-03 & 9.24779e-05 & 4.10 & 4.63278e-06 & 4.32 & 3.42344e-07 & 3.76  \\
\hline
\end{tabular}\label{tab2ddln2}
\end{table}

\end{example}

\begin{example} (Temporal Accuracy Test)
Following the test case 2 in Example \ref{exam2dcl}, we produce numerical solutions at $T=1$ using DG schemes (\ref{ch_FPDGFull1st+}) and (\ref{FPDGFull+}) based on $P^2$ polynomails with time steps $\Delta t= 2^{-m}$ with $2\leq m \leq 5$ and appropriate meshes. The $L^2, L^\infty$ errors and orders of convergence are shown in Table \ref{timeacc}, and these results confirm that DG schemes (\ref{ch_FPDGFull1st+}) and (\ref{FPDGFull+}) are first order and second order in time, respectively.

\begin{table}[!htbp]\tabcolsep0.03in
\caption{$L^2, L^\infty$ errors and EOC at $T = 1$ with time step $\Delta t$.}
\begin{tabular}[c]{||c|c|c|c|c|c|c|c|c|c||}
\hline
\multirow{2}{*}{Scheme} & \multirow{2}{*}{Mesh}&   \multirow{2}{*}{ } & $\Delta t=2^{-2}$ & \multicolumn{2}{|c|}{$\Delta t=2^{-3}$} & \multicolumn{2}{|c|}{$\Delta t=2^{-4}$} & \multicolumn{2}{|c||}{$\Delta t=2^{-5}$}  \\
\cline{4-10}
& & & error & error & order & error & order & error & order\\
\hline
\multirow{2}{*}{(\ref{ch_FPDGFull1st+})}  & \multirow{2}{*}{$32^2$} & $\|u-u_h\|_{L^2}$ &  4.21032e-03 & 2.06620e-03 & 1.03 & 1.02380e-03 & 1.01 & 5.09705e-04 & 1.01  \\
\cline{3-10}
 & & $\|u-u_h\|_{L^\infty}$  & 7.48743e-04 & 3.67246e-04 & 1.03 & 1.81917e-04 & 1.01 & 9.05192e-05 & 1.01  \\
\hline
\multirow{2}{*}{(\ref{ch_FPDGFull1st+})}  & \multirow{2}{*}{$64^2$} & $\|u-u_h\|_{L^2}$ & 4.21016e-03 & 2.06606e-03 & 1.03 & 1.02364e-03 & 1.01 & 5.09522e-04 & 1.01  \\
\cline{3-10}
& & $\|u-u_h\|_{L^\infty}$  & 7.48925e-04 & 3.67359e-04 & 1.03 & 1.82003e-04 & 1.01 & 9.05934e-05 & 1.01  \\
\hline
\hline
\multirow{2}{*}{(\ref{FPDGFull+})}  & \multirow{2}{*}{$64^2$} & $\|u-u_h\|_{L^2}$ & 1.32995e-03 & 3.18993e-04 & 2.06 & 7.69932e-05 & 2.05 & 1.88763e-05 & 2.03  \\
\cline{3-10}
& & $\|u-u_h\|_{L^\infty}$  & 2.24427e-04 & 5.35331e-05 & 2.07 & 1.27796e-05 & 2.07 & 3.12208e-06 & 2.03  \\
\hline
\multirow{2}{*}{(\ref{FPDGFull+})}  & \multirow{2}{*}{$128^2$} & $\|u-u_h\|_{L^2}$ & 1.34130e-03 & 3.18230e-04 & 2.08 & 7.69137e-05 & 2.05 & 1.88416e-05 & 2.03  \\
\cline{3-10}
& & $\|u-u_h\|_{L^\infty}$  & 2.27726e-04 & 5.31968e-05 & 2.10 & 1.27386e-05 & 2.06 & 3.09279e-06 & 2.04  \\
 \hline
\end{tabular}\label{timeacc}
\end{table}
\end{example}

\begin{example} Following \cite{WKG06}, we consider the Cahn-Hilliard equation (\ref{CH}) with constant mobility $M(u)=1$, the logarithmic Flory-Huggins potential
$$
F(u) = 600\left(u\ln u + (1-u) \ln (1-u)\right) + 1800u(1-u),
$$
and the parameters $\epsilon=1$ and $B=10^2$. The equation is subject to the initial condition
$$
u_0(x,y)=
\left \{
\begin{array}{rl}
0.71, \quad & (x,y) \in \Omega_1, \\
0.69, \quad & (x,y) \in \Omega_2, \\
\end{array}
\right.
$$
where the square domain
$$
\Omega = [-0.5, 0.5]\times [-0.5, 0.5], \quad \Omega_1 = [-0.2, 0.2]\times [-0.2, 0.2], \quad \Omega_2 = \Omega \backslash \Omega_1.
$$
The boundary conditions are taken as Neumann BCs, (ii) in (\ref{BC}).

\noindent\textbf{Test case 1.}
We first solve this problem by the first order fully discrete IEQ-DG scheme (\ref{ch_FPDGFull1st+}) based on $P^1$ and $P^2$ polynomials with time step $\Delta t=10^{-7}$ and meshes $40\times40$ and $80\times80$, respectively.
The contours at $T=8 \times 10^{-5}$ are shown in Figure \ref{circlecontour}, and the corresponding energy and mass evolutions are shown in Figure \ref{circleengmass}. From Figure \ref{circlecontour}, we find that the solution structure is well resolved even on coarser mesh and lower order $P^1$ polynomials, and the scheme (\ref{ch_FPDGFull1st+}) using $P^2$ polynomials gives a better resolution than that using $P^1$ polynomials on coarser meshes $40\times40$, but there is no noticeable difference with solution on refined meshes $80\times80$ or higher order polynomial $P^2$ as shown in Figure \ref{circlecontour}(b)-(d). The pattern structure is well consistent with that obtained in \cite{WKG06}. Figure \ref{circleengmass}(a) shows that the numerical solution of the scheme (\ref{ch_FPDGFull1st+}) satisfies the energy dissipation law, Figure \ref{circleengmass}(b) and \ref{circleengmass}(c) show that the numerical solution conserves the total mass $\int_\Omega u_h^n dx=0.6932$ under an appropriate tolerance.

\begin{figure}
\centering
\subfigure[]{\includegraphics[width=0.49\textwidth]{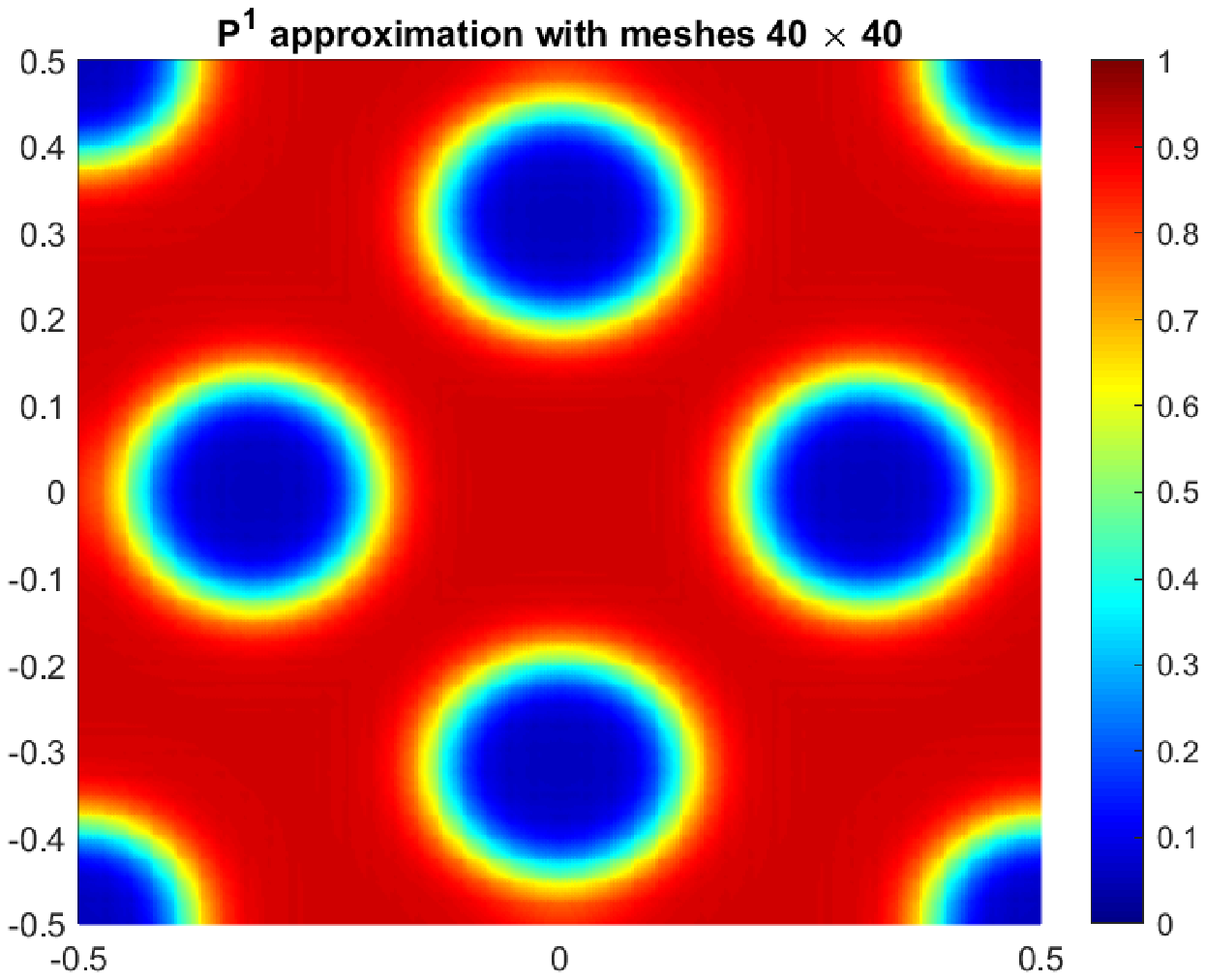}}
\subfigure[]{\includegraphics[width=0.49\textwidth]{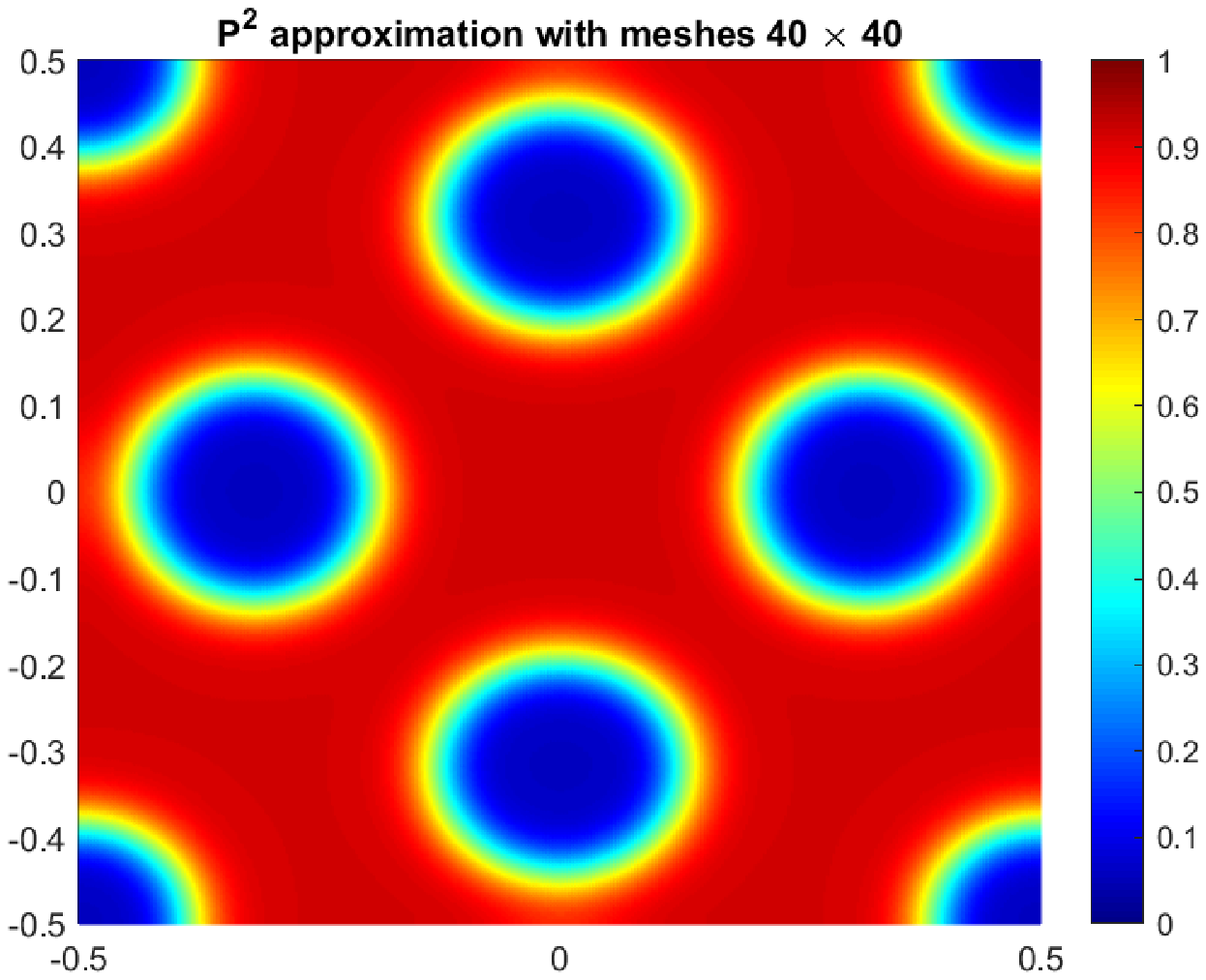}}
\subfigure[]{\includegraphics[width=0.49\textwidth]{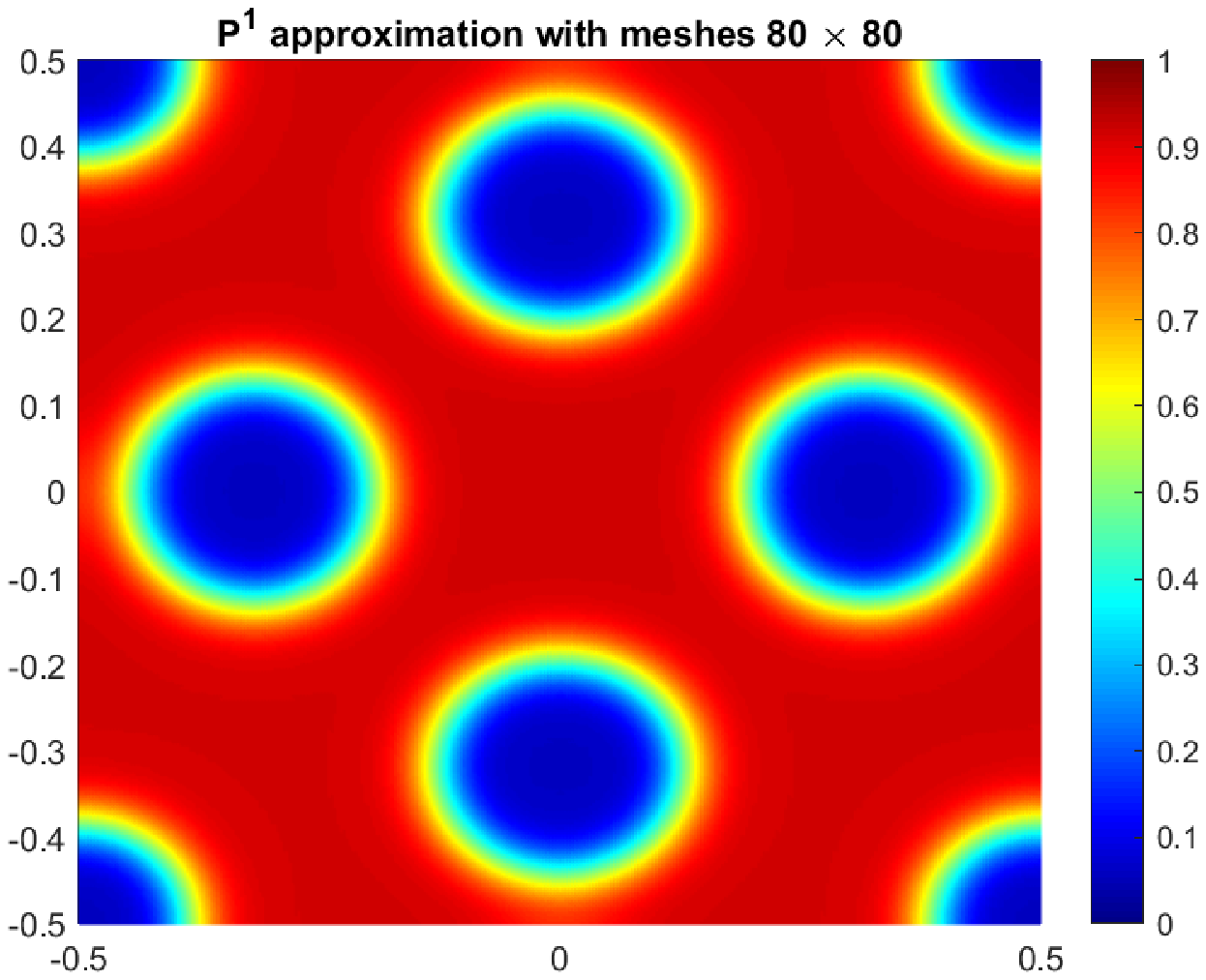}}
\subfigure[]{\includegraphics[width=0.49\textwidth]{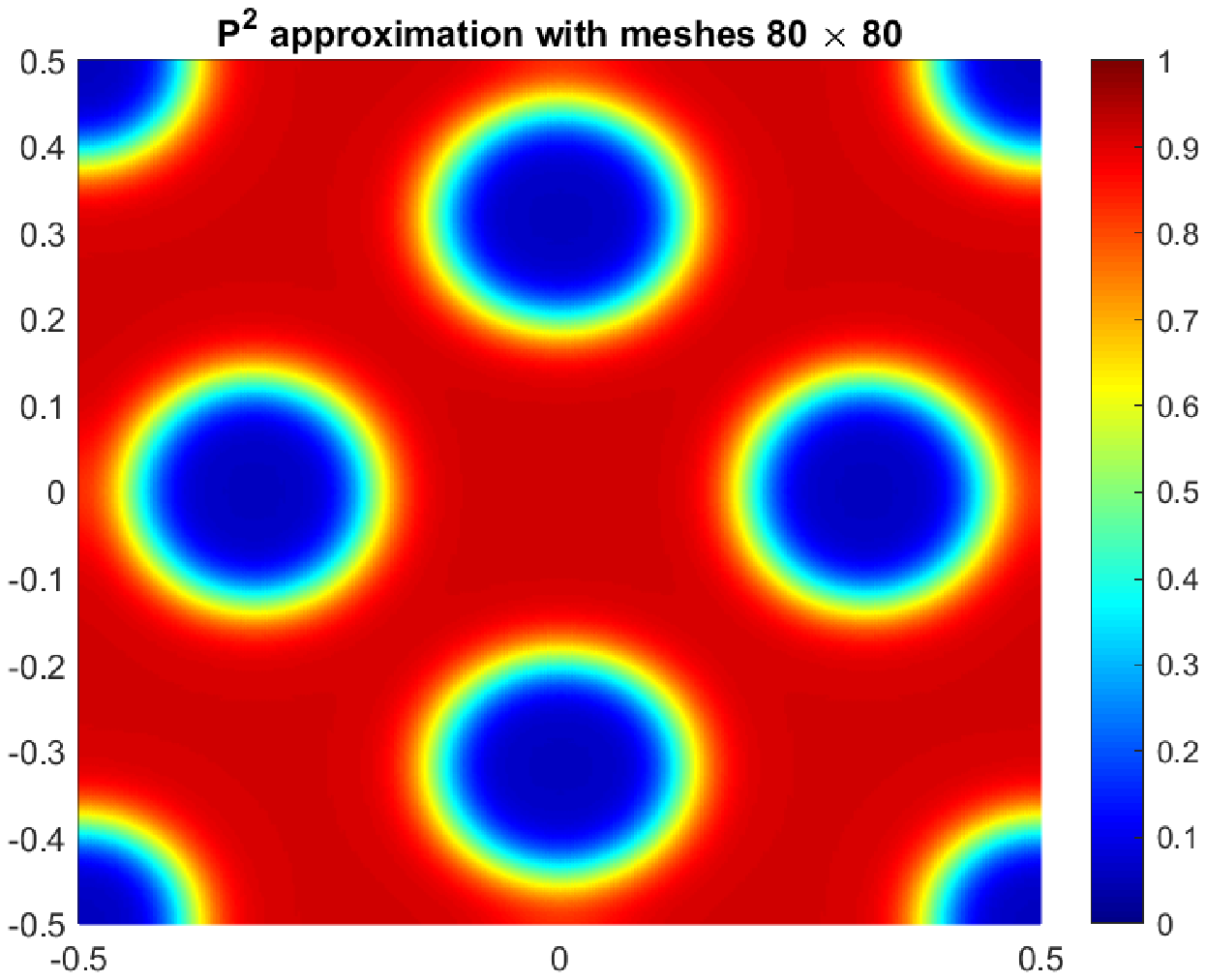}}
\caption{The contours of numerical solution for the scheme (\ref{ch_FPDGFull1st+}).}\label{circlecontour}
\end{figure}

\begin{figure}
\centering
\subfigure[]{\includegraphics[width=0.325\textwidth]{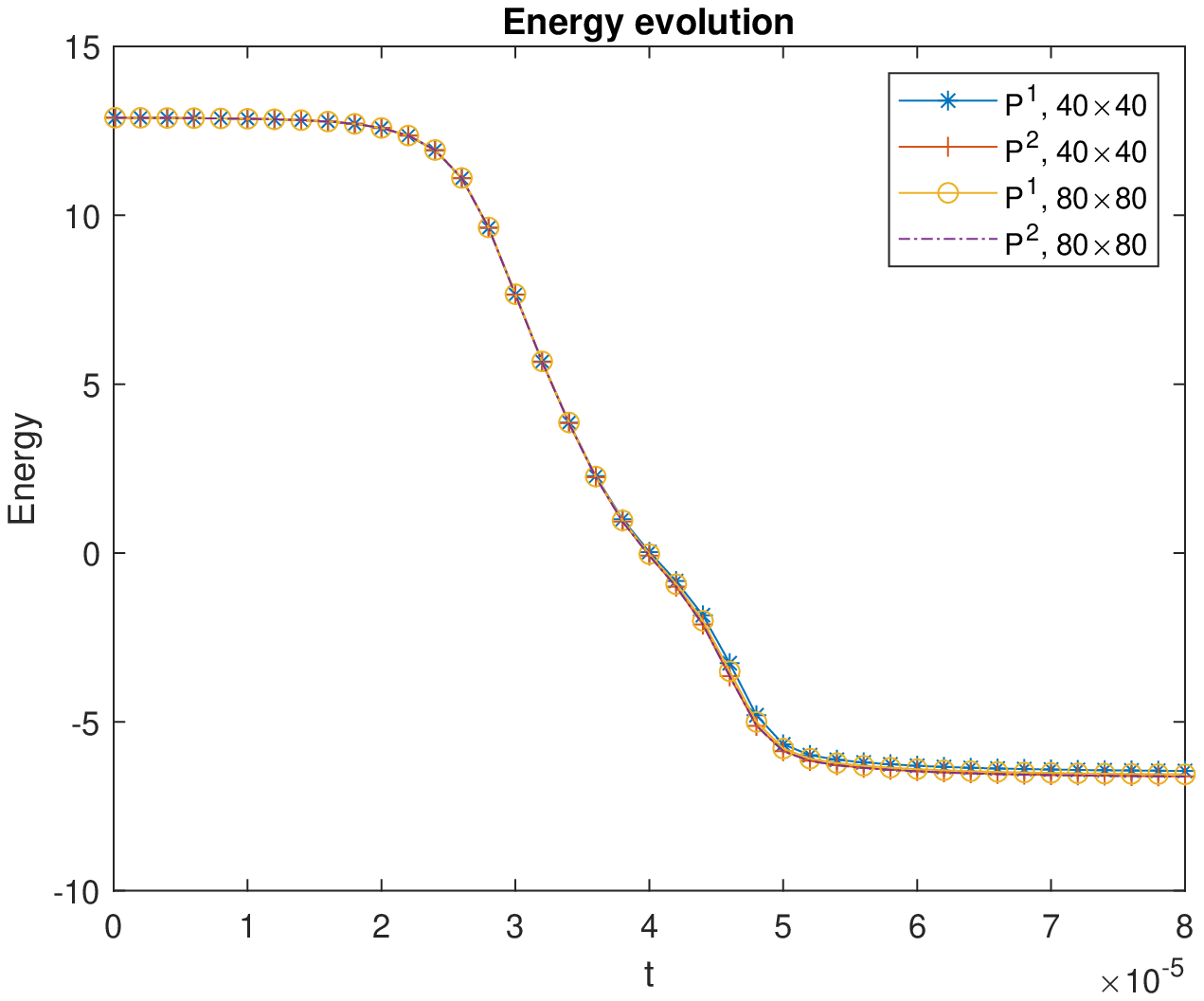}}
\subfigure[]{\includegraphics[width=0.325\textwidth]{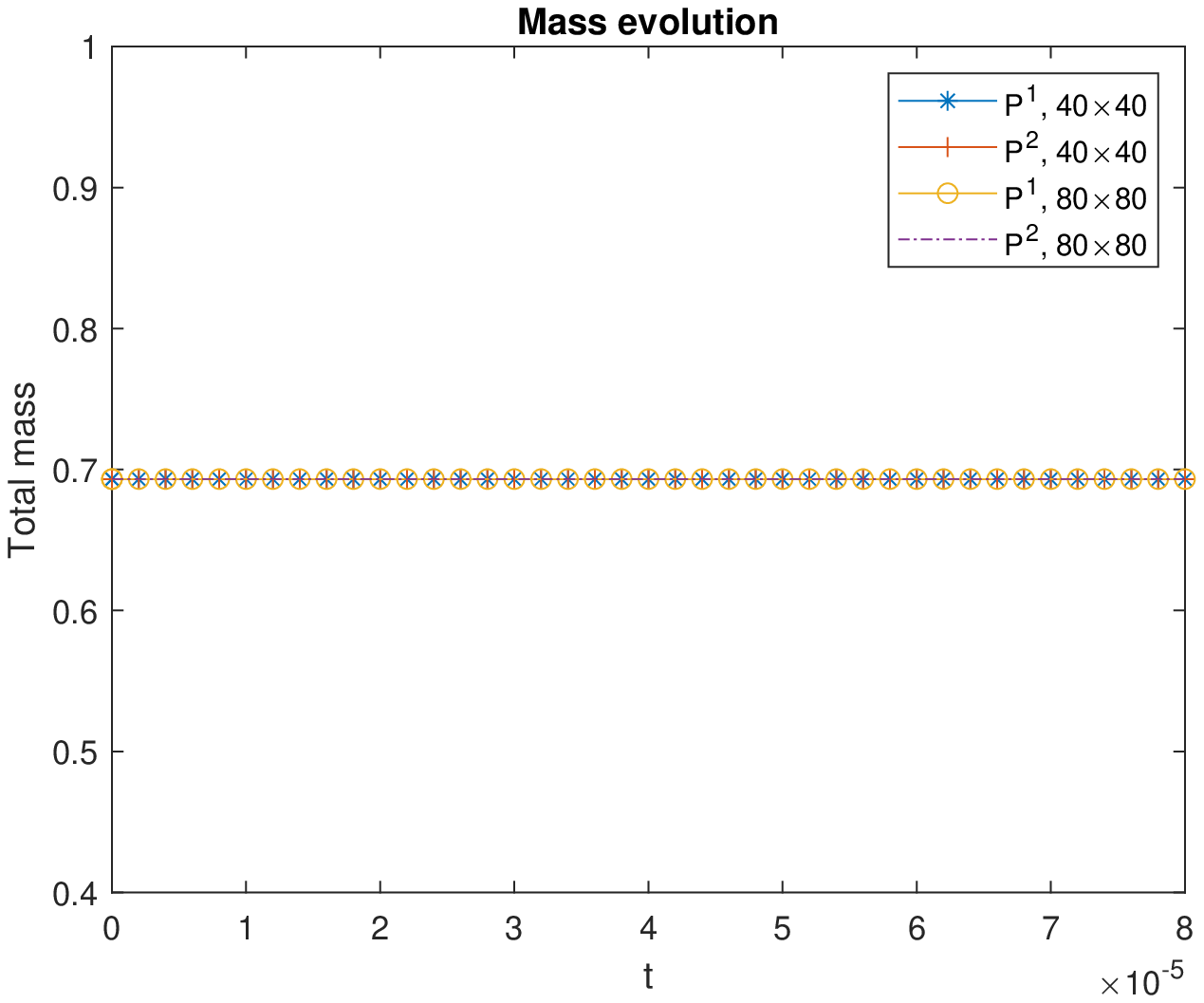}}
\subfigure[]{\includegraphics[width=0.325\textwidth]{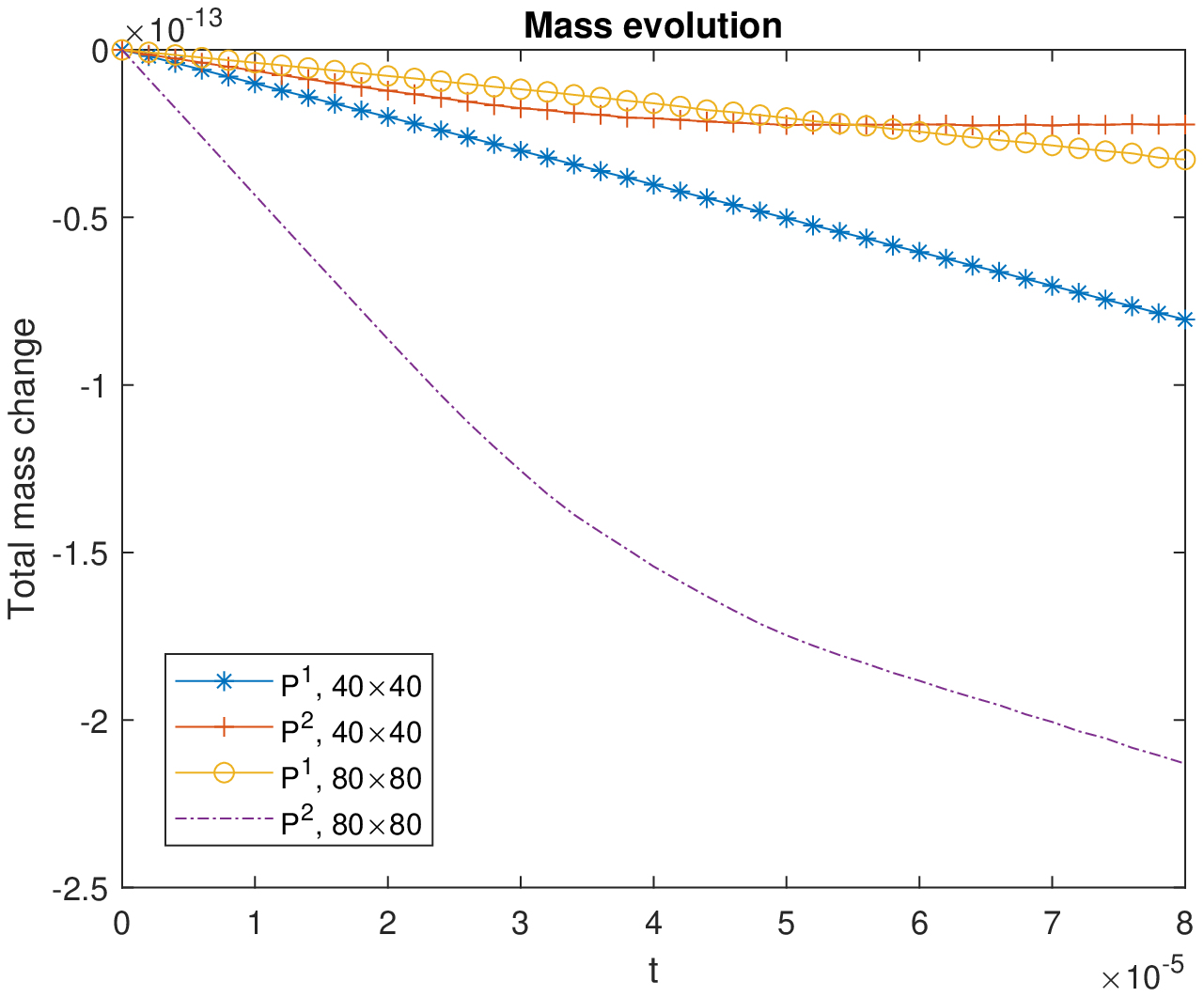}}
\caption{The energy and mass evolution of numerical solution for the scheme (\ref{ch_FPDGFull1st+}). (a) The energy and mass evolution; (b) the total mass evolution ; (c) the evolution of the total mass difference $\int_{\Omega}u_h^n-u_h^0dx$.}\label{circleengmass}
\end{figure}

\noindent\textbf{Test case 2.}
We solve this problem again by the second order fully discrete IEQ-DG scheme (\ref{FPDGFull+}) based on $P^1$ and $P^2$ polynomials. In Figure \ref{circlecontour2}, we show the contours at $T=8 \times 10^{-5}$ obtained based on $P^1$ polynomials with mesh $40\times40$ and time steps  $\Delta t=10^{-7}, 8\times 10^{-8}, 5\times 10^{-8}, 2\times 10^{-8}$, respectively. From Figure \ref{circlecontour2}, we find the pattern structure is comparable to that in Figure \ref{circlecontour}(b)-(d) even with time step $\Delta t=10^{-7}$ and lower order $P^1$ polynomials.


Figure \ref{circleengmass2} shows that the numerical solution of the scheme (\ref{FPDGFull+}) satisfies the energy dissipation law (\ref{engdis}), but we do find that the modified energy (\ref{menergy}) better approximate the original energy with a smaller time step $\Delta t$, a smaller mesh size $h$ or polynomials of a higher degree. Figure \ref{circleengmass2+} implies the numerical solutions with different time steps $\Delta t$ conserve the total mass $\int_\Omega u dx=0.6932$ under an appropriate tolerance.

\begin{figure}
\centering
\subfigure[]{\includegraphics[width=0.49\textwidth]{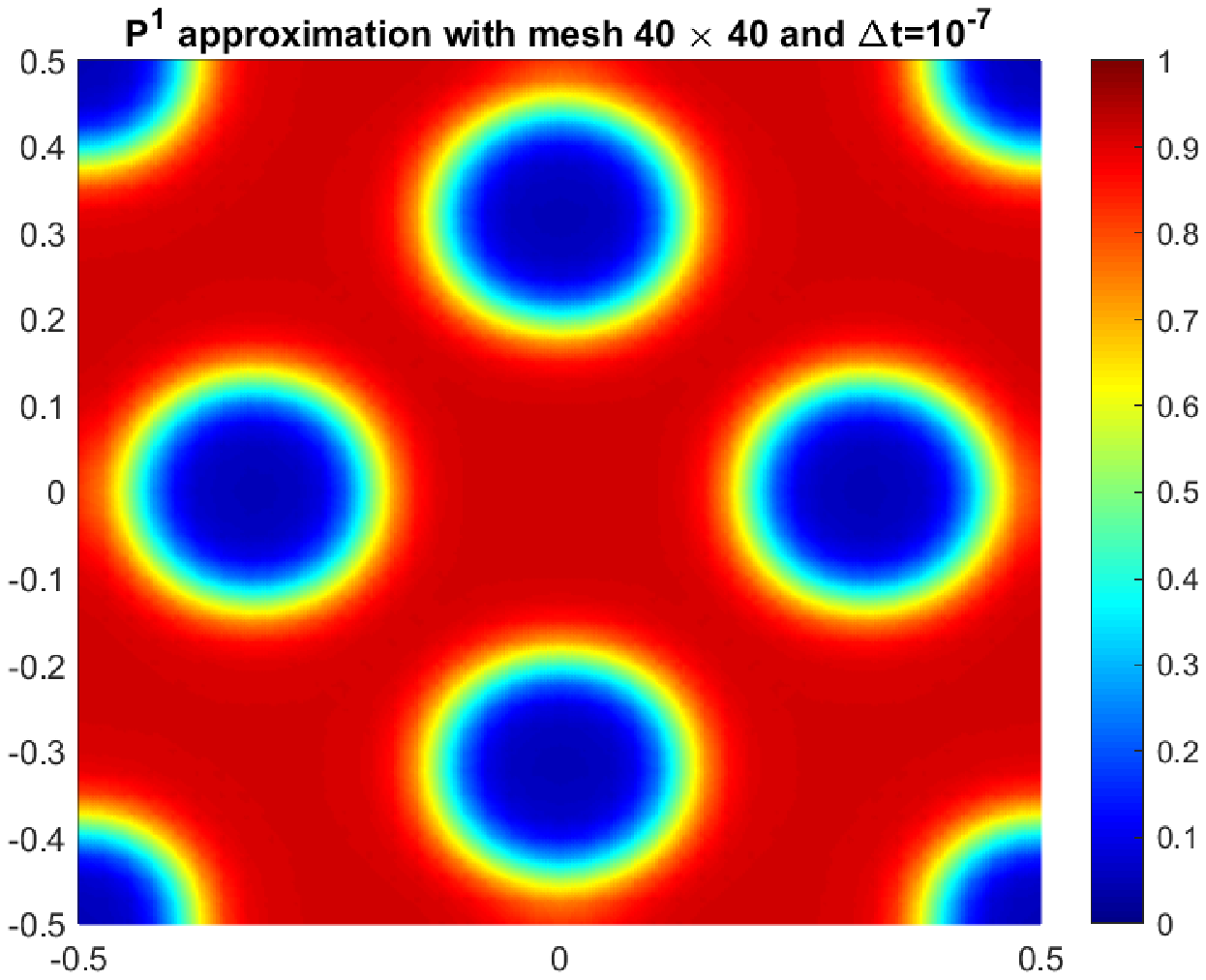}}
\subfigure[]{\includegraphics[width=0.49\textwidth]{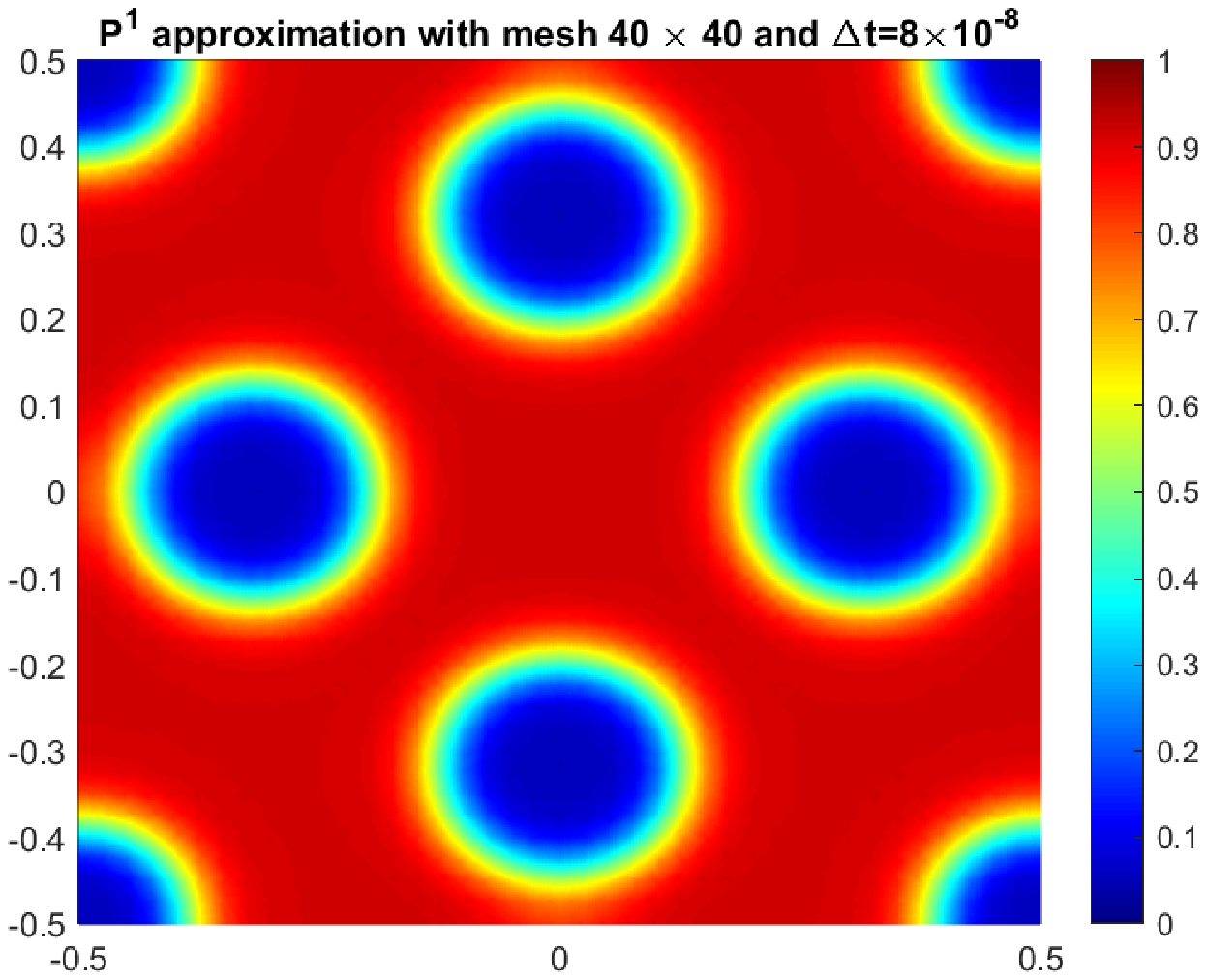}}
\subfigure[]{\includegraphics[width=0.49\textwidth]{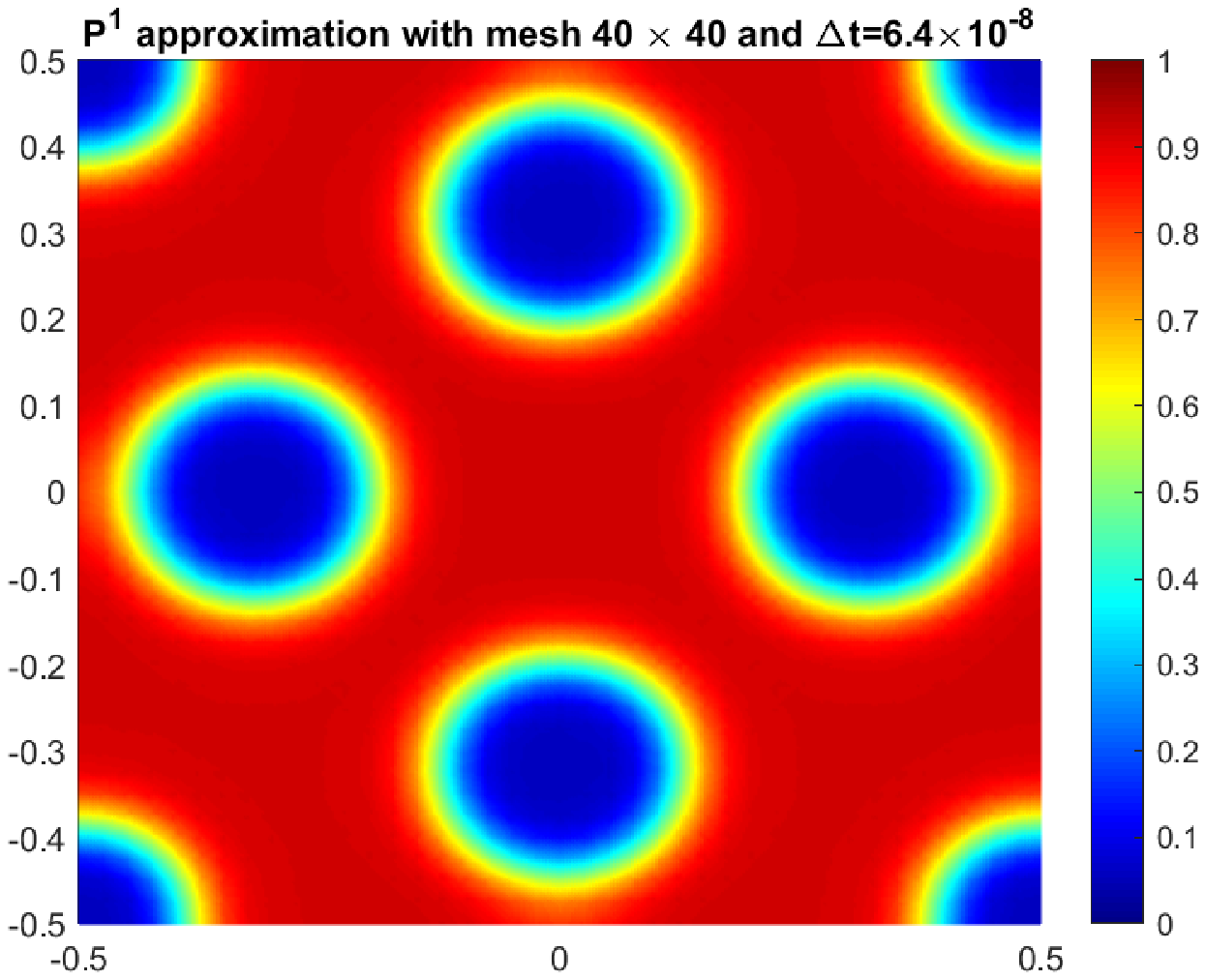}}
\subfigure[]{\includegraphics[width=0.49\textwidth]{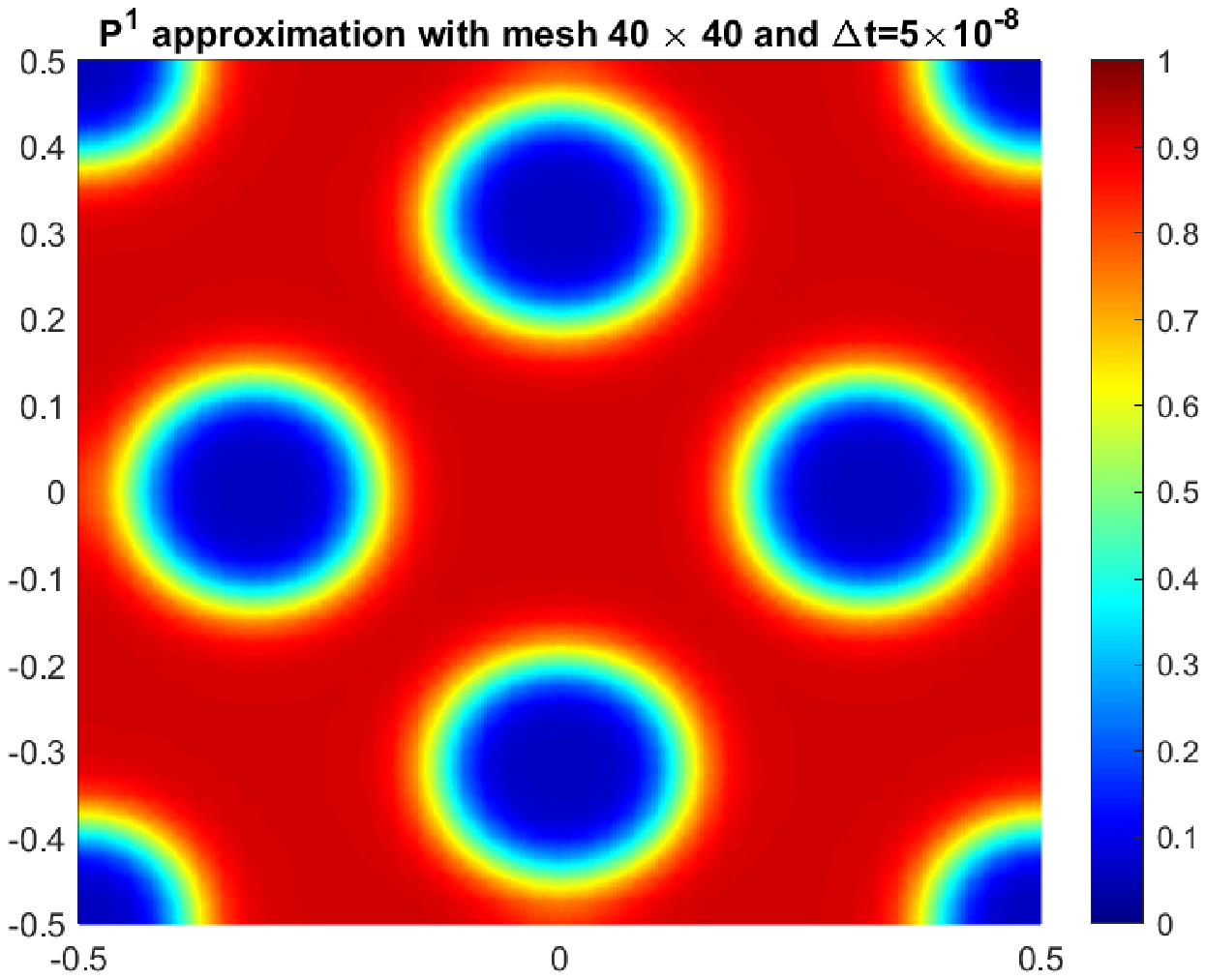}}
\caption{The contours of numerical solution for the scheme (\ref{FPDGFull+}).}\label{circlecontour2}
\end{figure}

\begin{figure}
\centering
\subfigure[]{\includegraphics[width=0.325\textwidth]{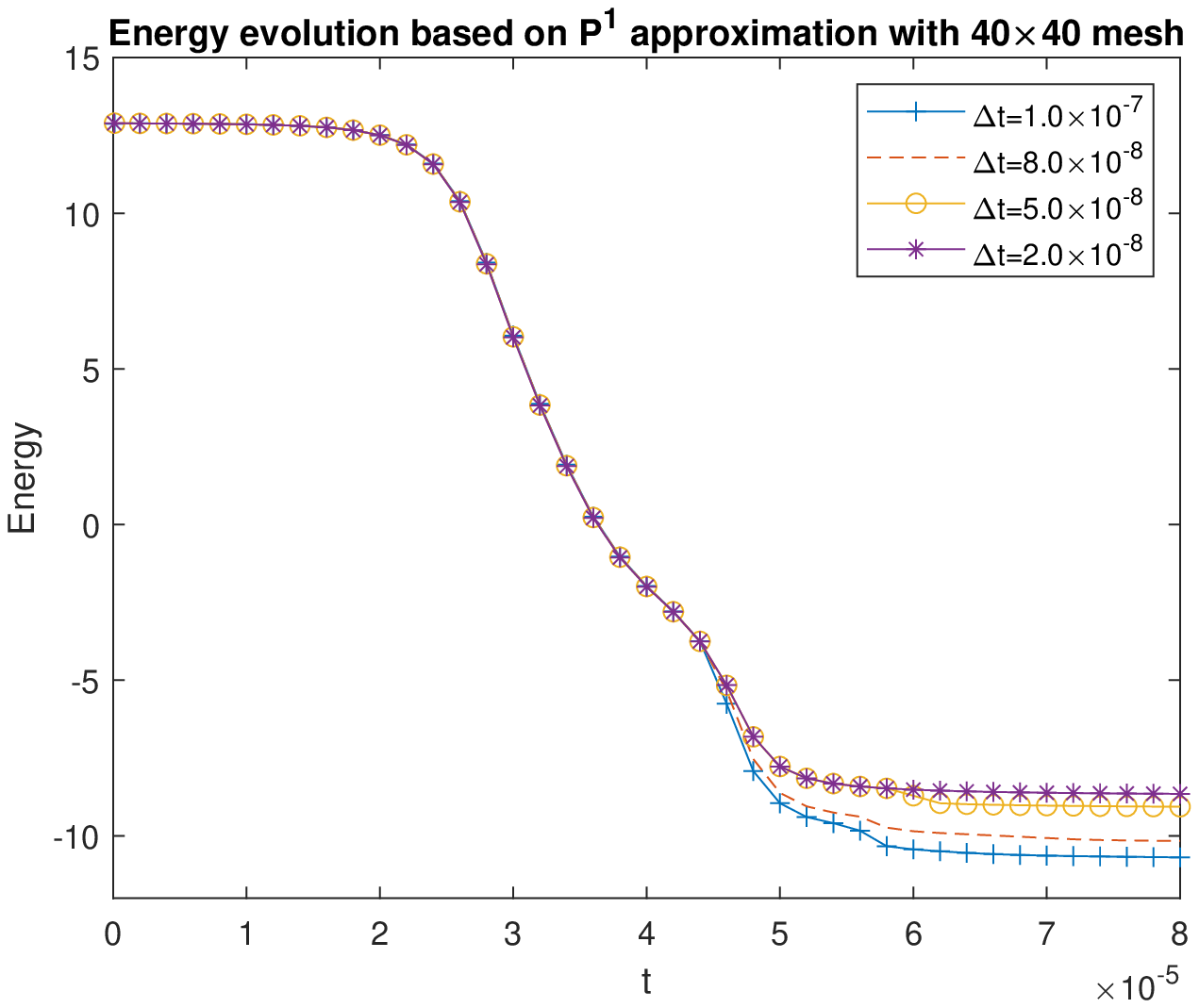}}
\subfigure[]{\includegraphics[width=0.325\textwidth]{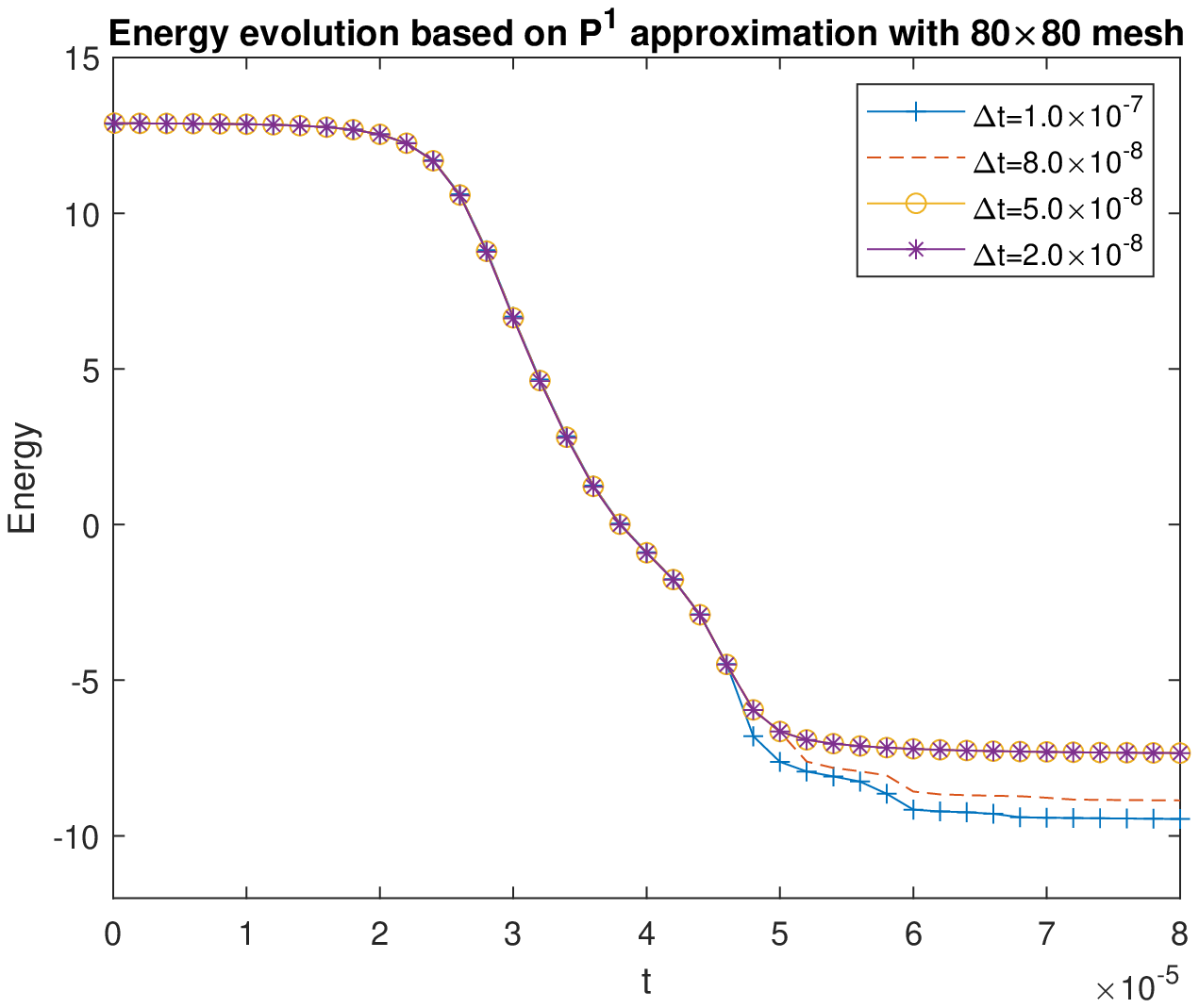}}
\subfigure[]{\includegraphics[width=0.325\textwidth]{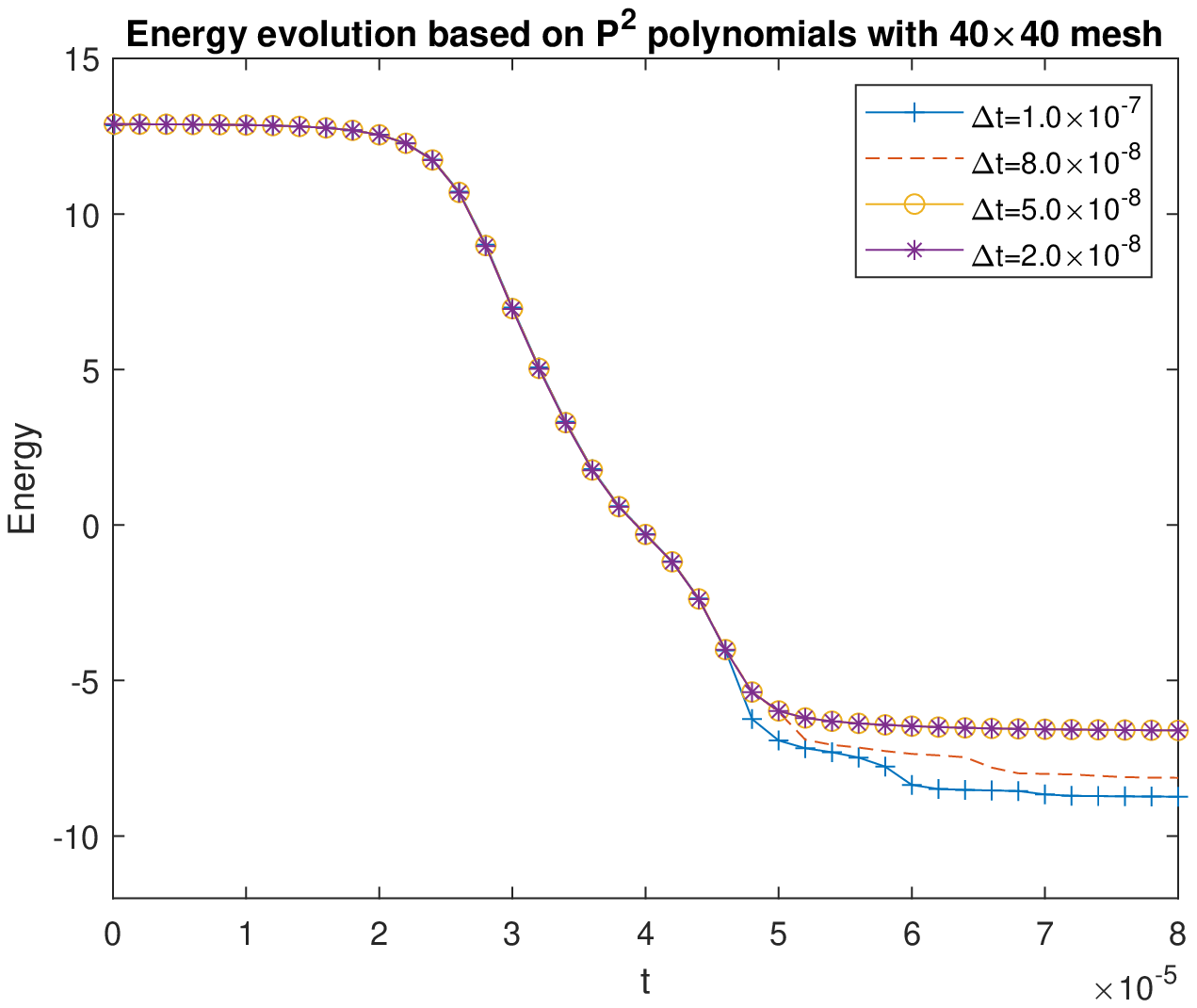}}
\caption{The energy evolution of numerical solution for the scheme (\ref{FPDGFull+}).}\label{circleengmass2}
\end{figure}

\begin{figure}
\centering
\subfigure[]{\includegraphics[width=0.325\textwidth]{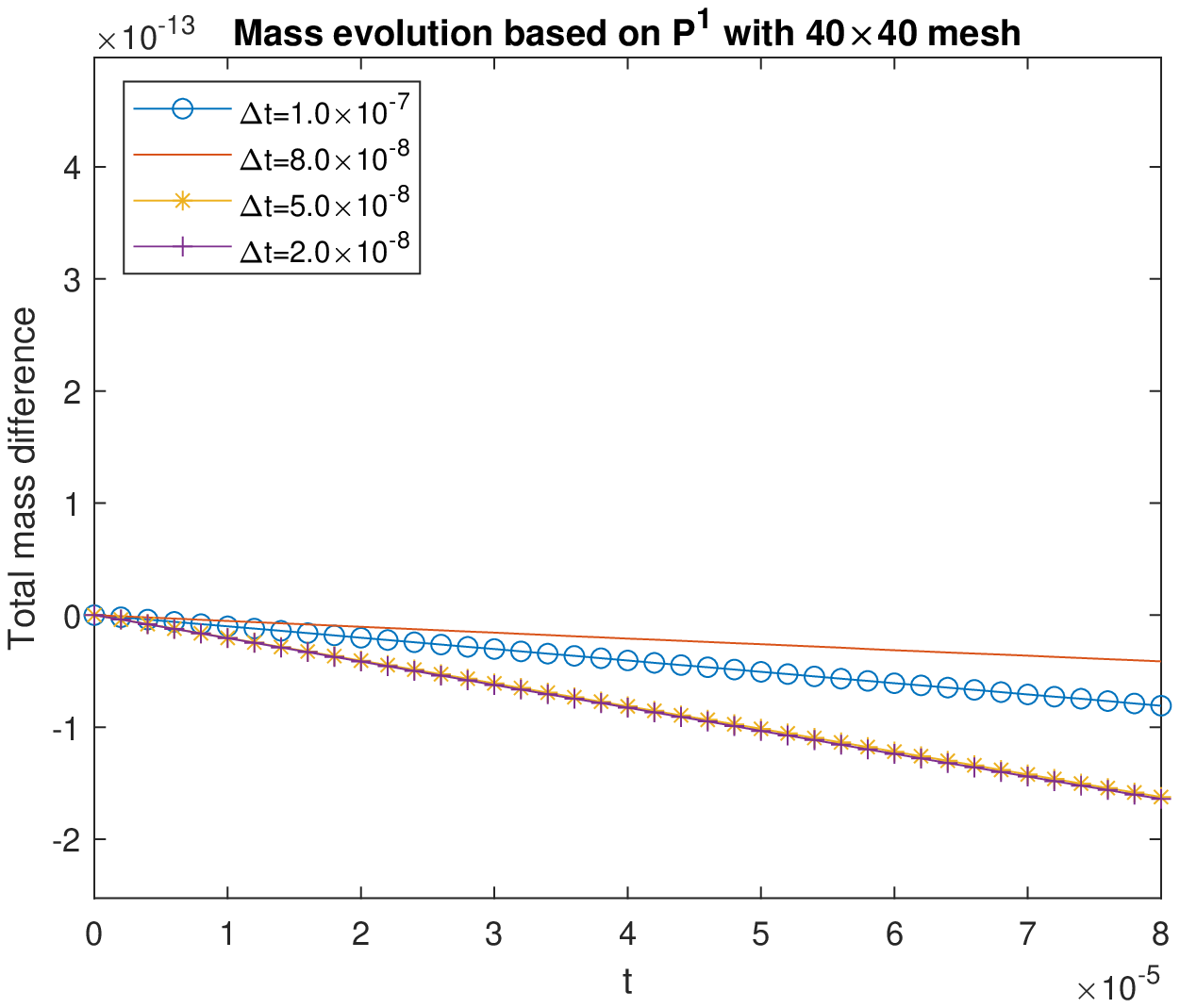}}
\subfigure[]{\includegraphics[width=0.325\textwidth]{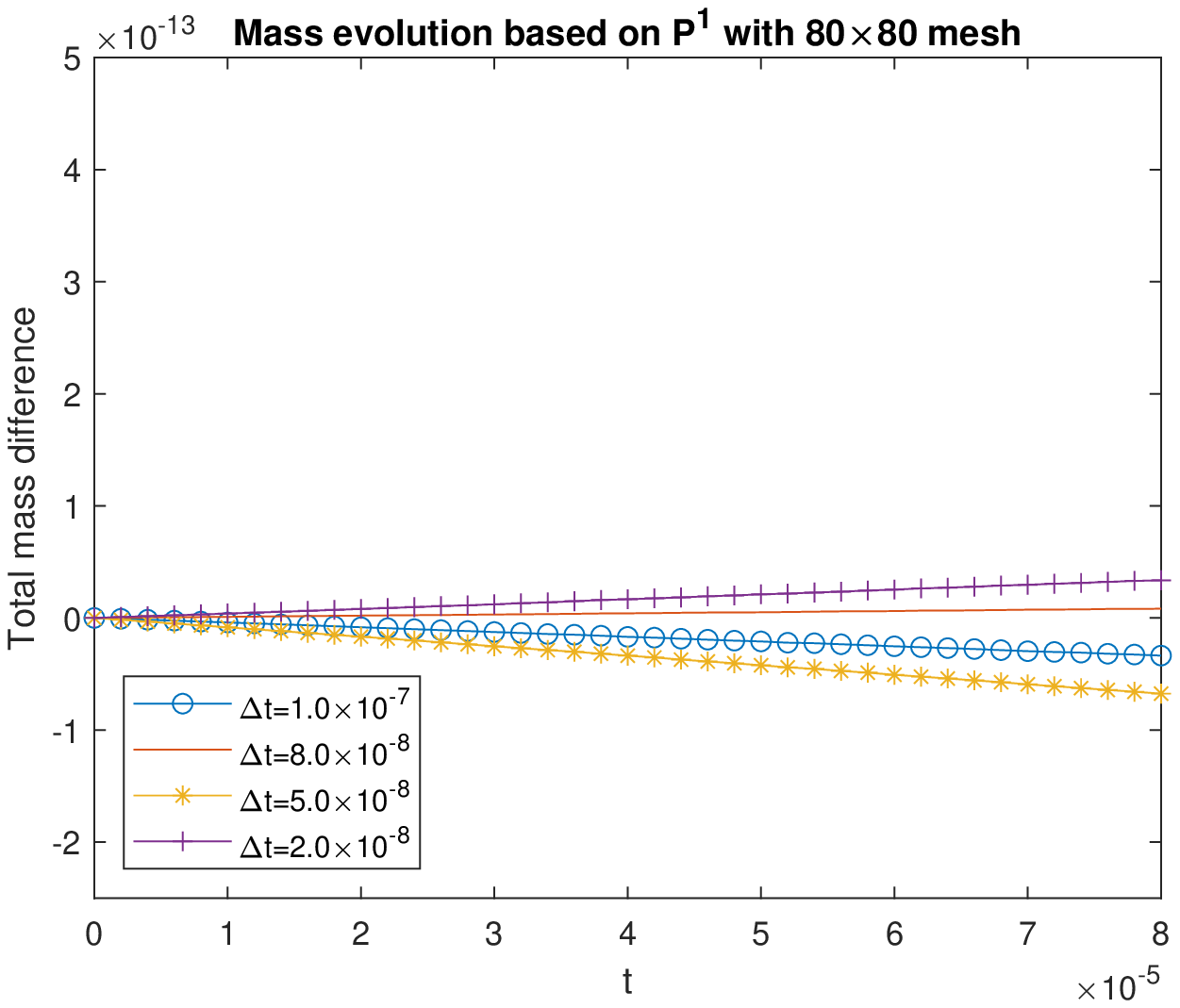}}
\subfigure[]{\includegraphics[width=0.325\textwidth]{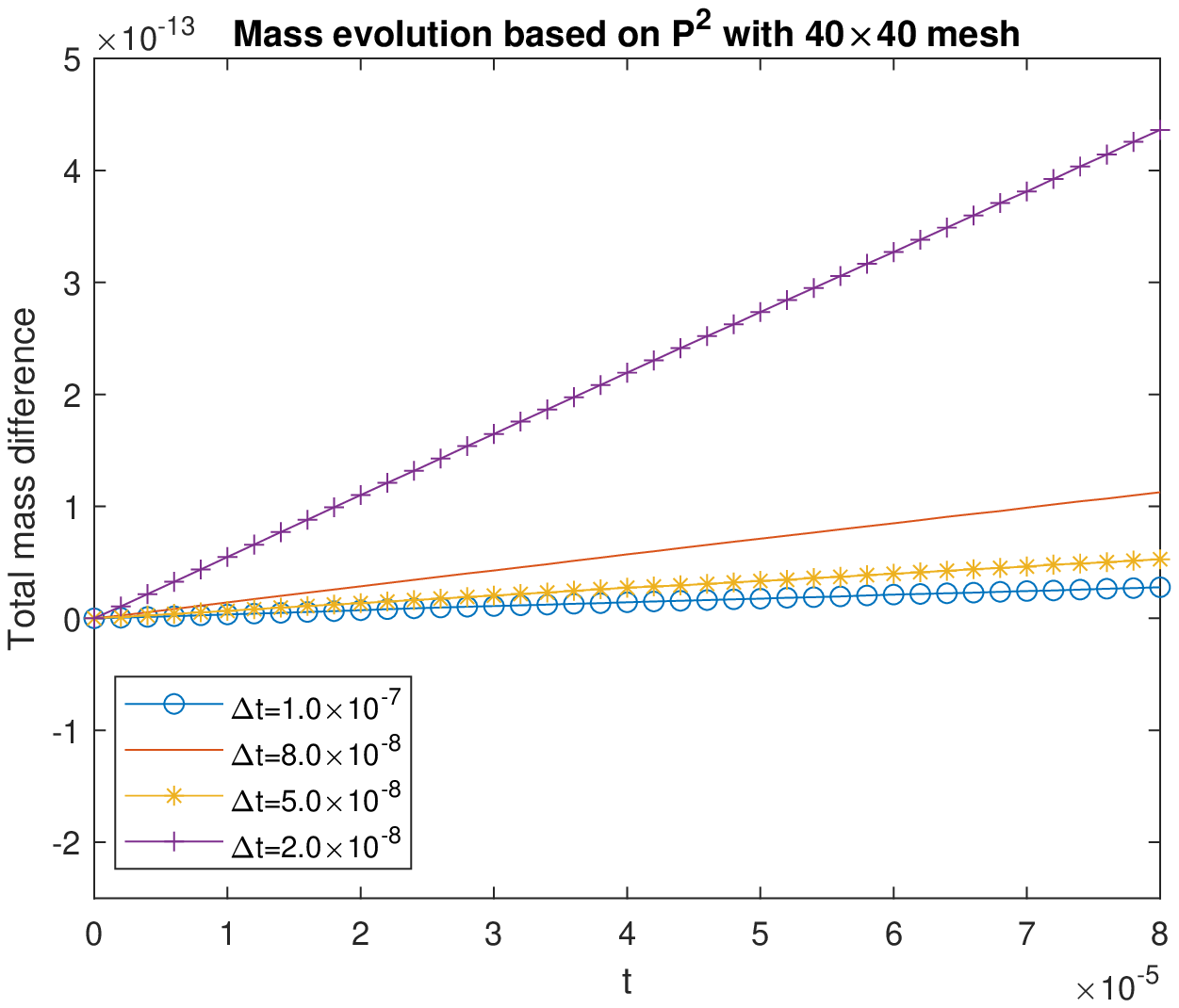}}
\caption{The total mass difference $\int_{\Omega}u_h^n-u_h^0dx$ evolution for scheme (\ref{FPDGFull+}).}\label{circleengmass2+}
\end{figure}

\end{example}

\begin{example} Following \cite{WKG06}, we further consider the Cahn-Hilliard equation (\ref{CH}) with degenerate mobility $M(u)=u(1-u)$, the logarithmic Flory-Huggins potential
$$
F(u) = 3000\left(u\ln u + (1-u) \ln (1-u)\right) + 9000u(1-u),
$$
and the parameters $\epsilon=1$ and $B=10^3$. The initial condition is
$$
u_0(x,y) = 0.63 + 0.05 \text{rand}(x,y),
$$
where $\text{rand}(x,y)$ is the random perturbation function in $[-1,1]$ and has zero mean. For the boundary conditions, we take Neumann BCs (ii) in (\ref{BC}).

We solve this problem by the scheme (\ref{FPDGFull+}) based on $P^2$ polynomials with meshes $64\times64$ and time step
$\Delta t=10^{-8}$. The evolution of the concentration field is shown in Figure \ref{Npat}. The corresponding energy and mass evolutions are shown in Figure \ref{Nmasseng}. Figure \ref{Npat} clearly shows the
two phases of the concentration evolution. The first phase is governed by spinodal decomposition and phase separation, which is roughly corresponding to the first three figures of Figure \ref{Npat},
this period is basically terminated as soon as the local concentration is driven to either value of the two binodal points.
The second phase is governed by grain coarsening, approximately from $t=8\times 10^{-6}$ onwards the generated patterns cluster and grains tend to coarsen, which is a very slow process. Figure \ref{Npat} shows statistically similar patterns in the numerical solution as those in \cite{WKG06}. Figure \ref{Nmasseng} further confirms the numerical solution of the scheme (\ref{FPDGFull+}) satisfies the energy dissipation law and conserves the total mass $\int_\Omega u dx=0.63$.
\begin{figure}
\centering
\subfigure{\includegraphics[width=0.325\textwidth]{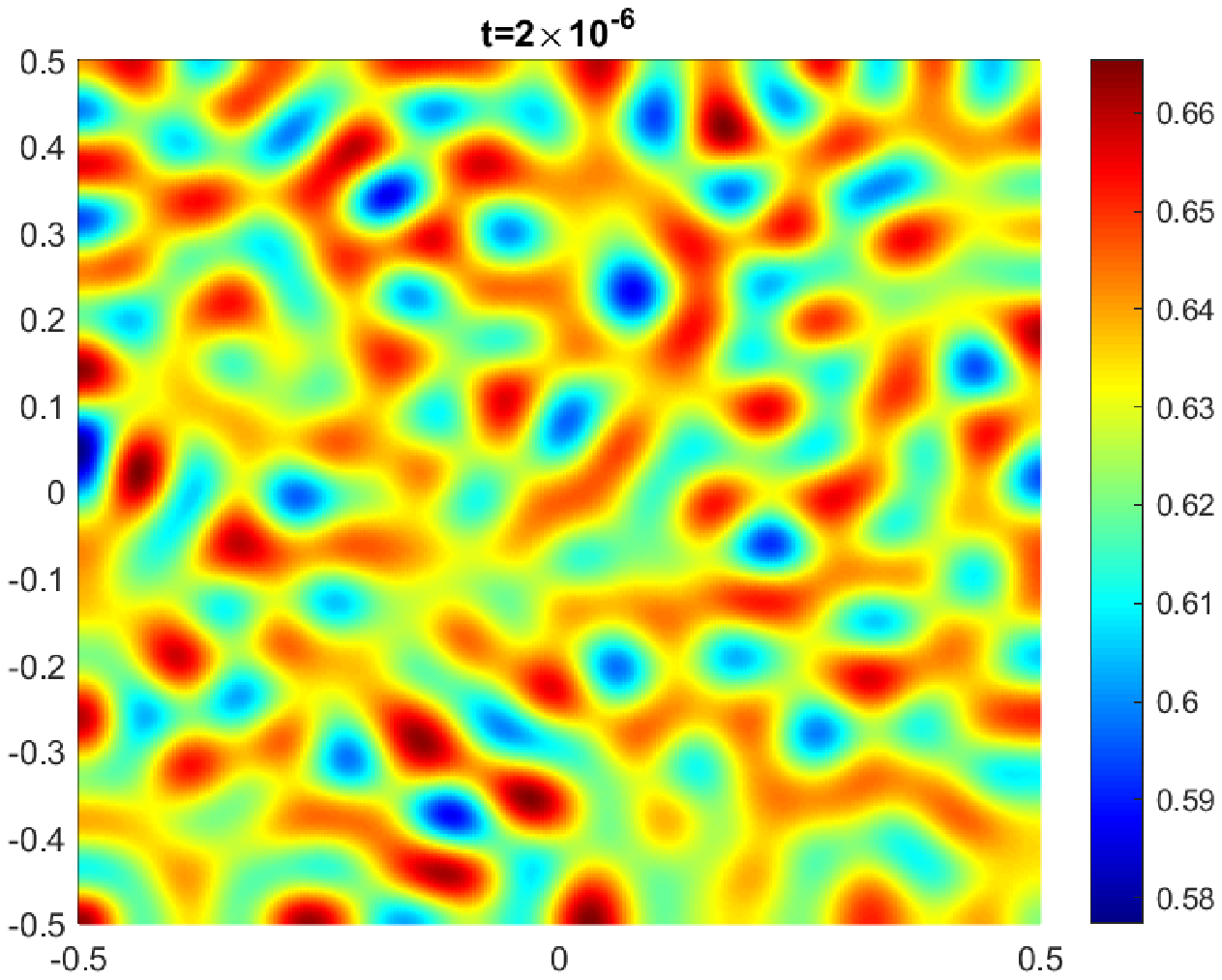}}
\subfigure{\includegraphics[width=0.325\textwidth]{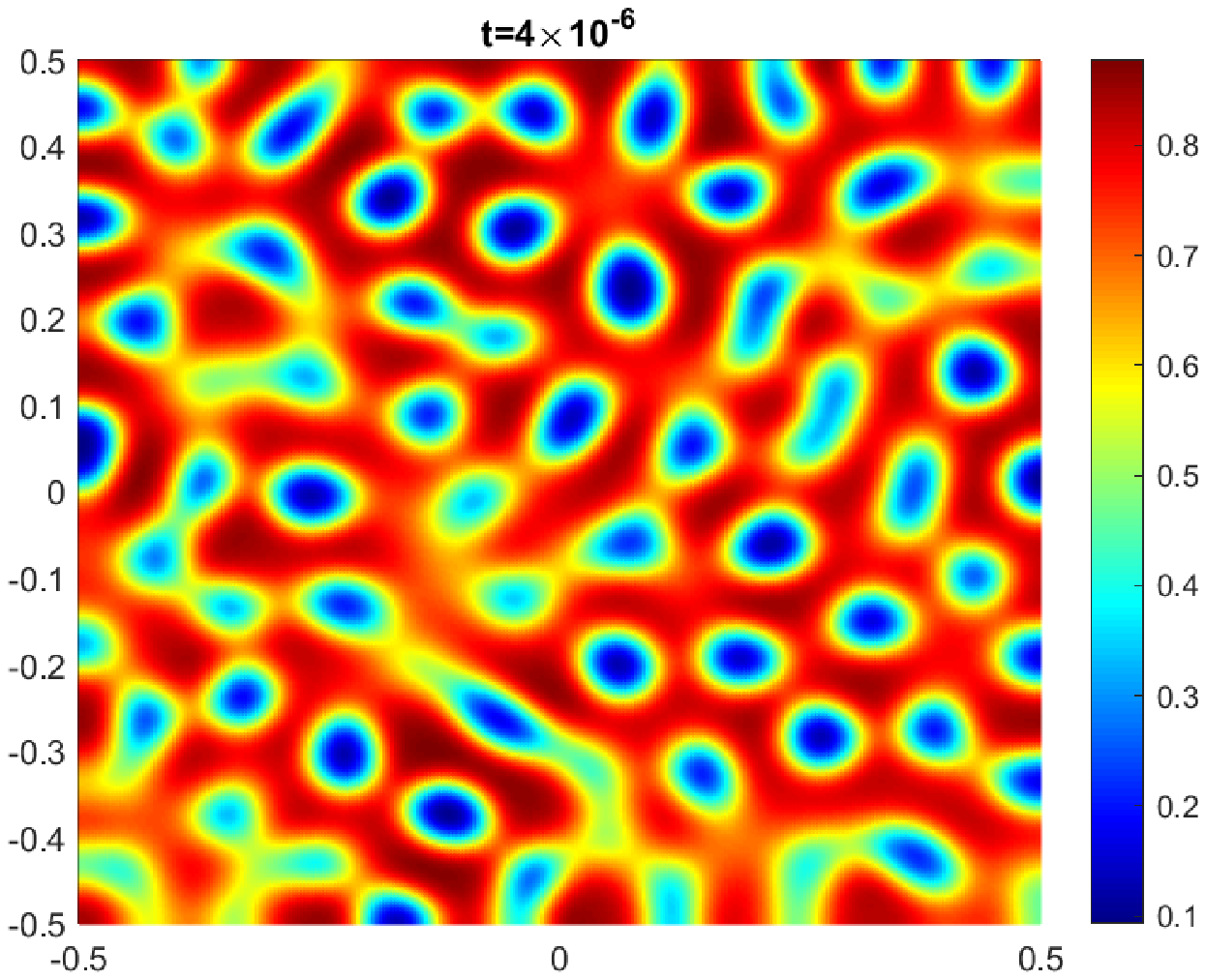}}
\subfigure{\includegraphics[width=0.325\textwidth]{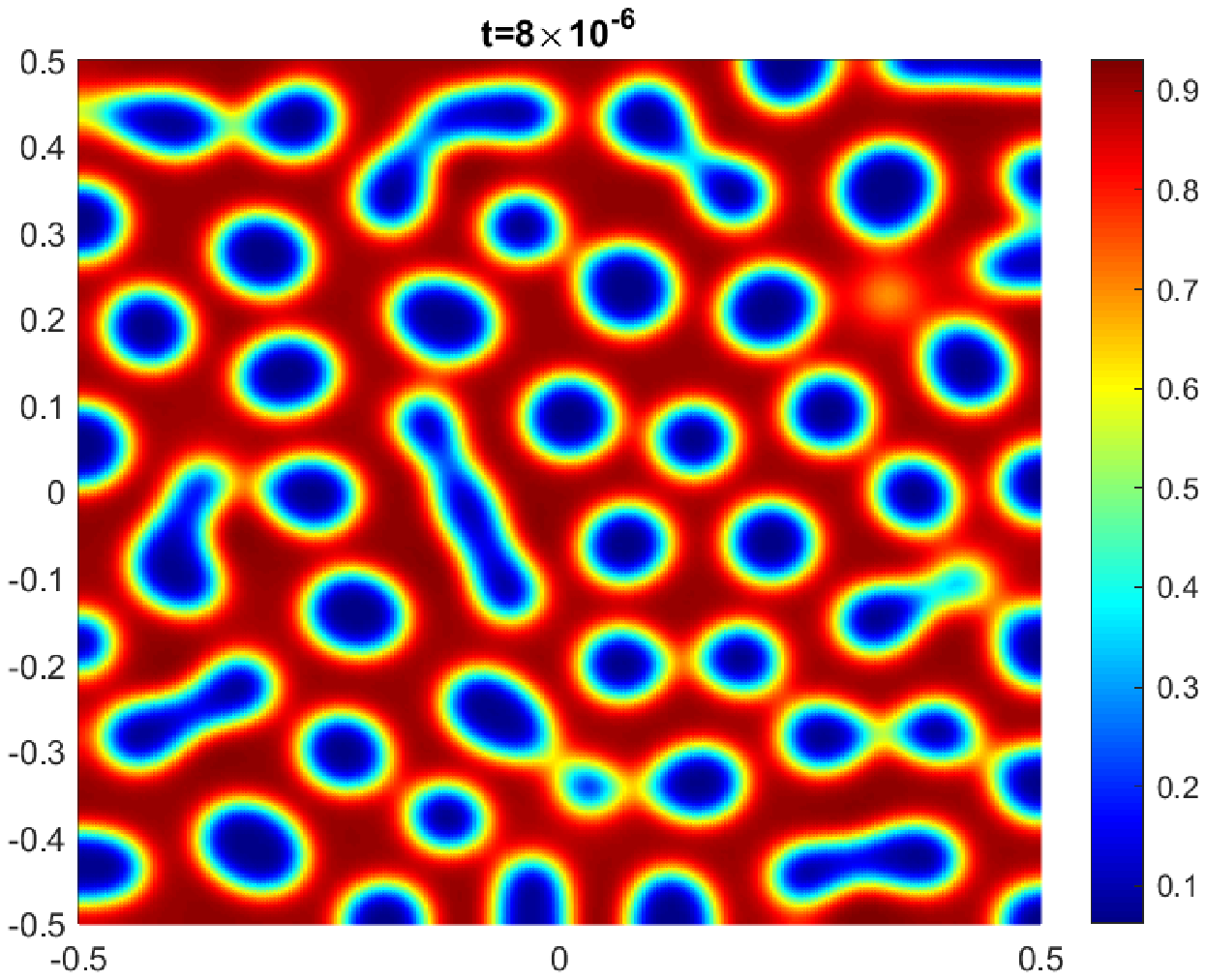}}
\subfigure{\includegraphics[width=0.325\textwidth]{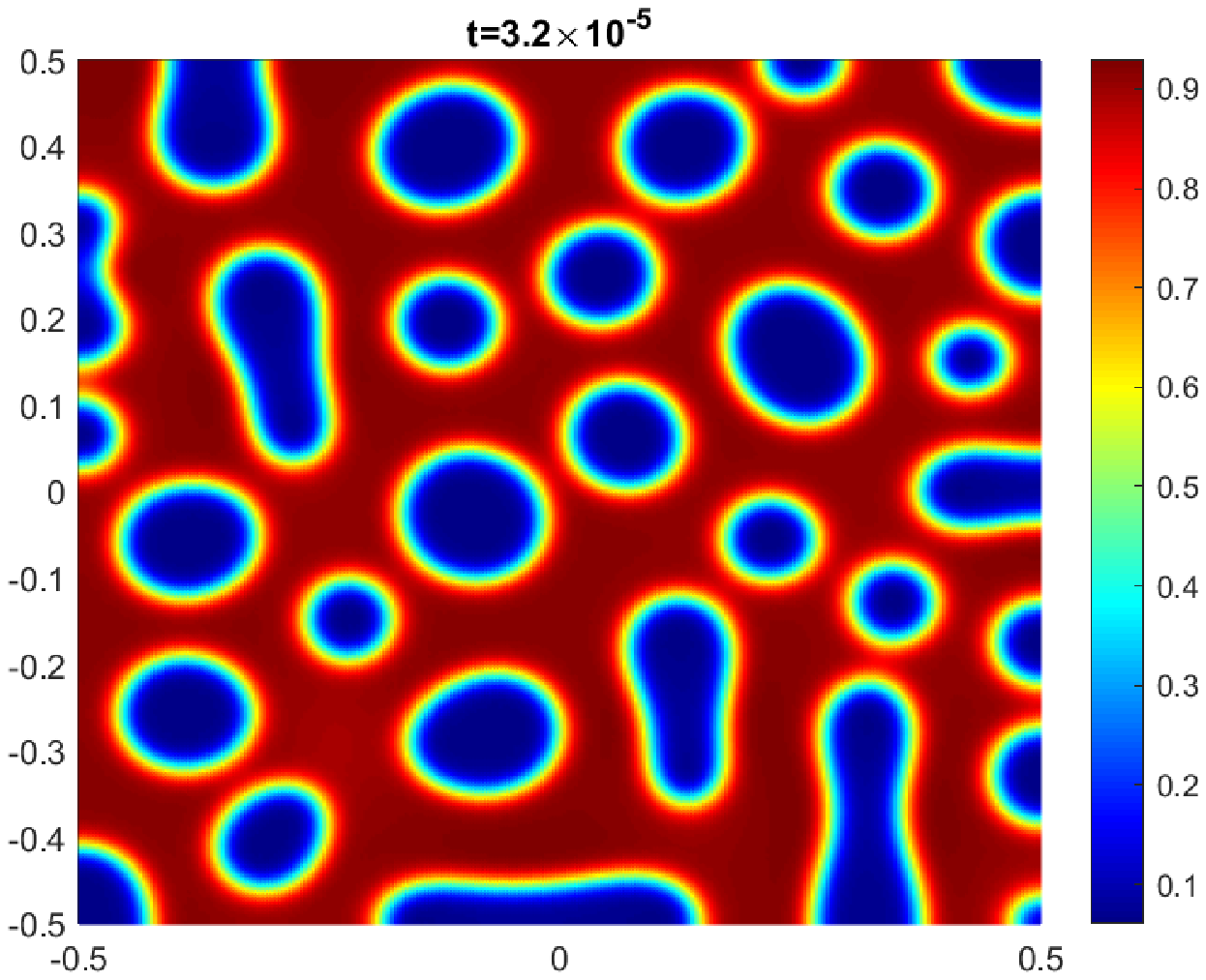}}
\subfigure{\includegraphics[width=0.325\textwidth]{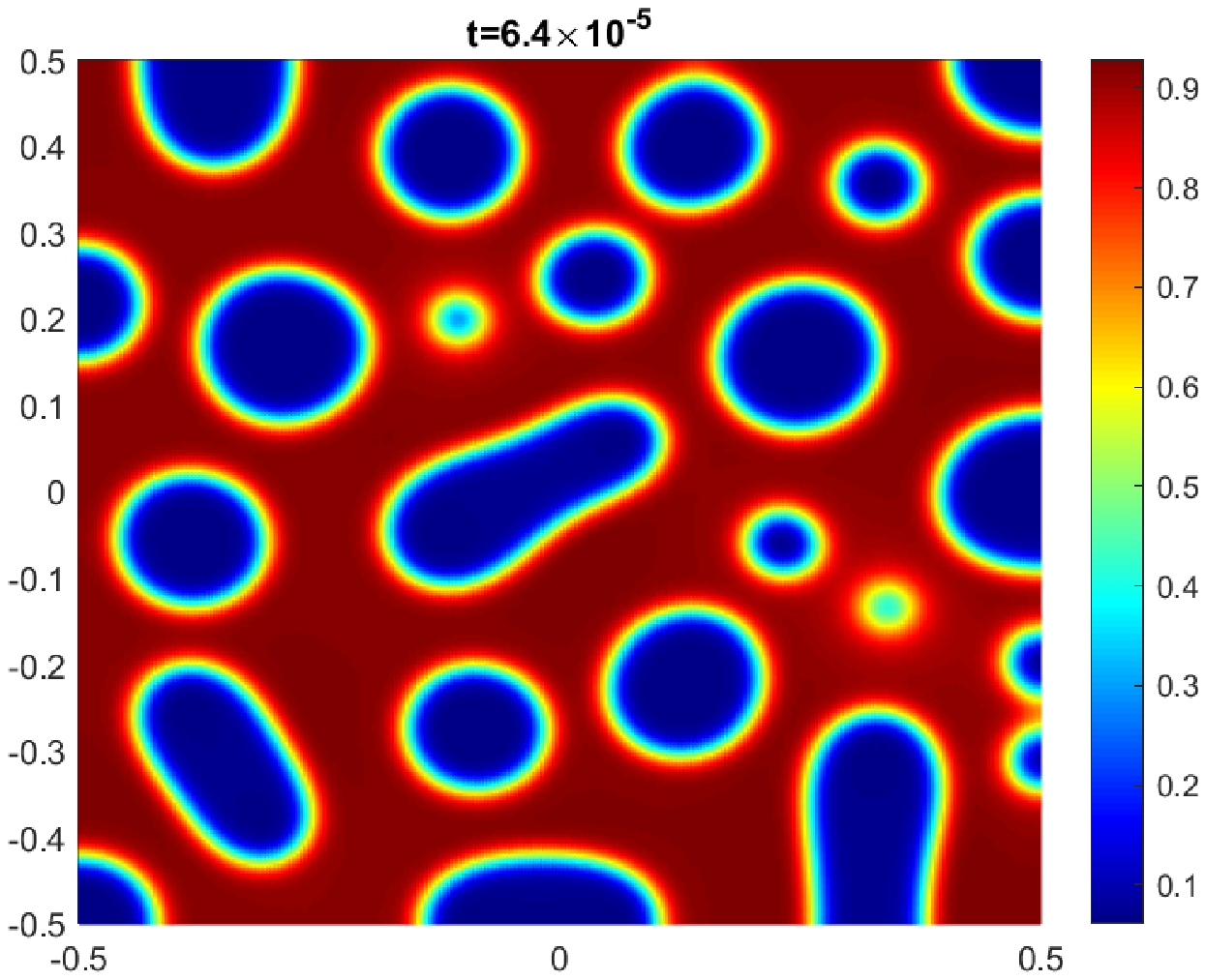}}
\subfigure{\includegraphics[width=0.325\textwidth]{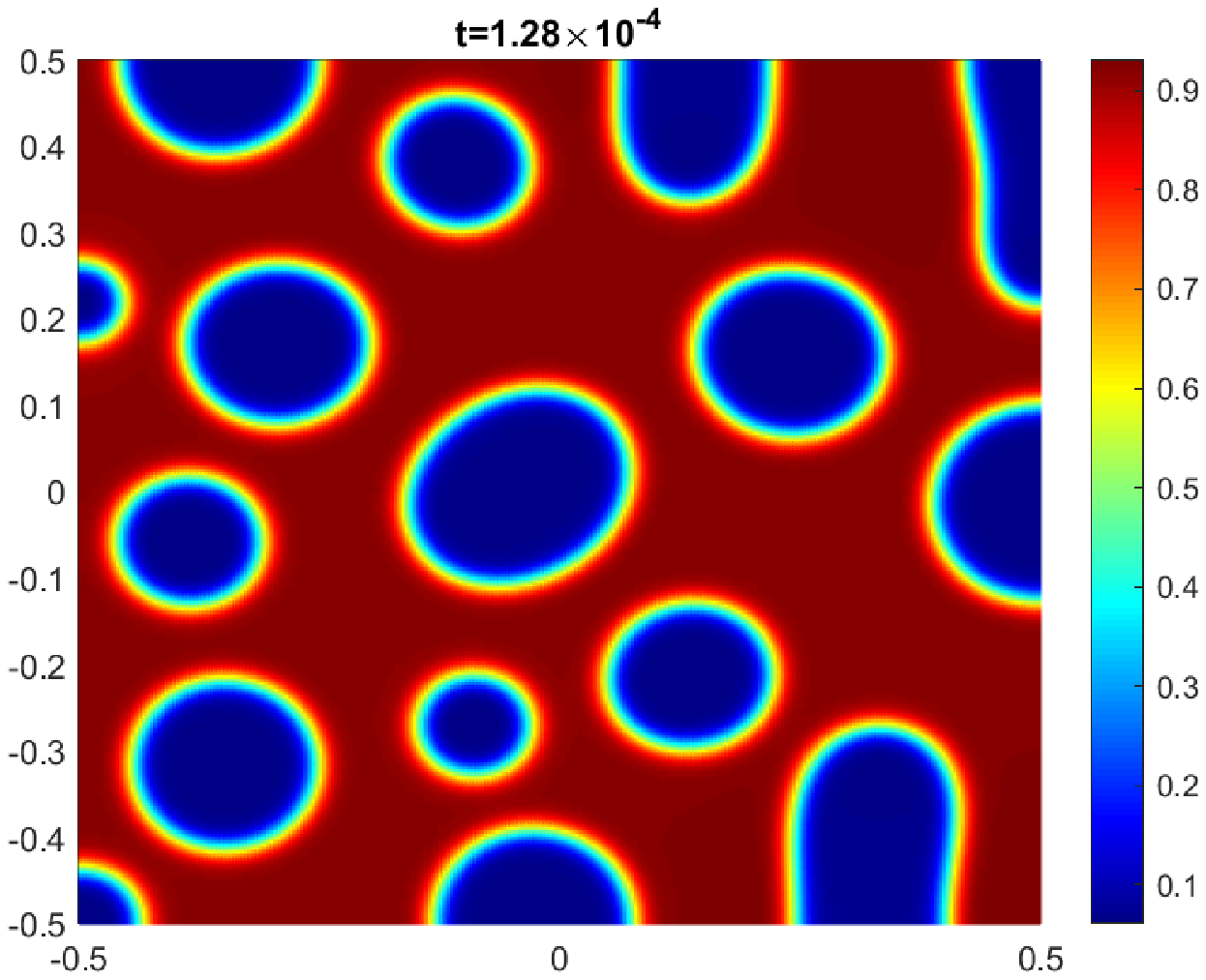}}
\subfigure{\includegraphics[width=0.325\textwidth]{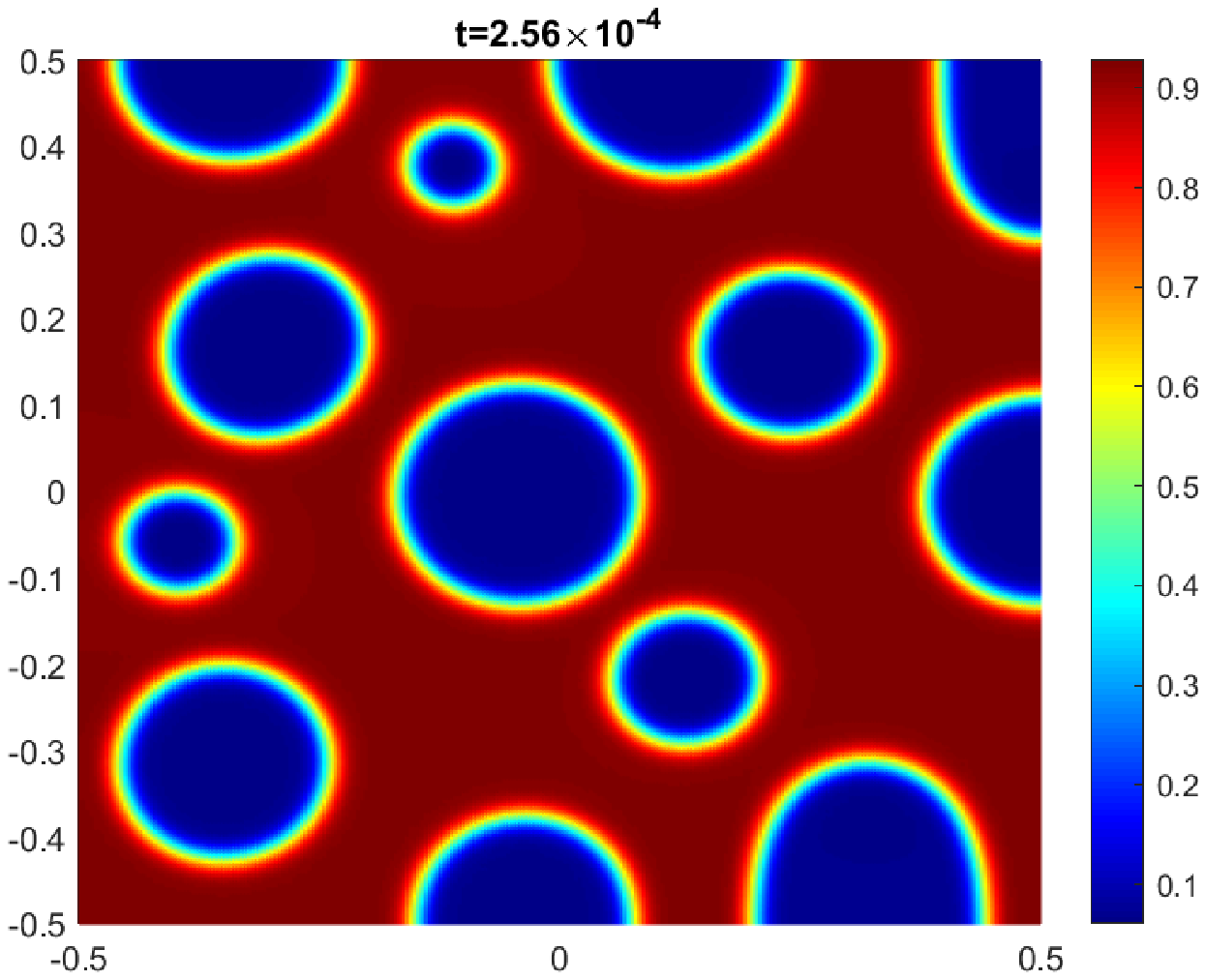}}
\subfigure{\includegraphics[width=0.325\textwidth]{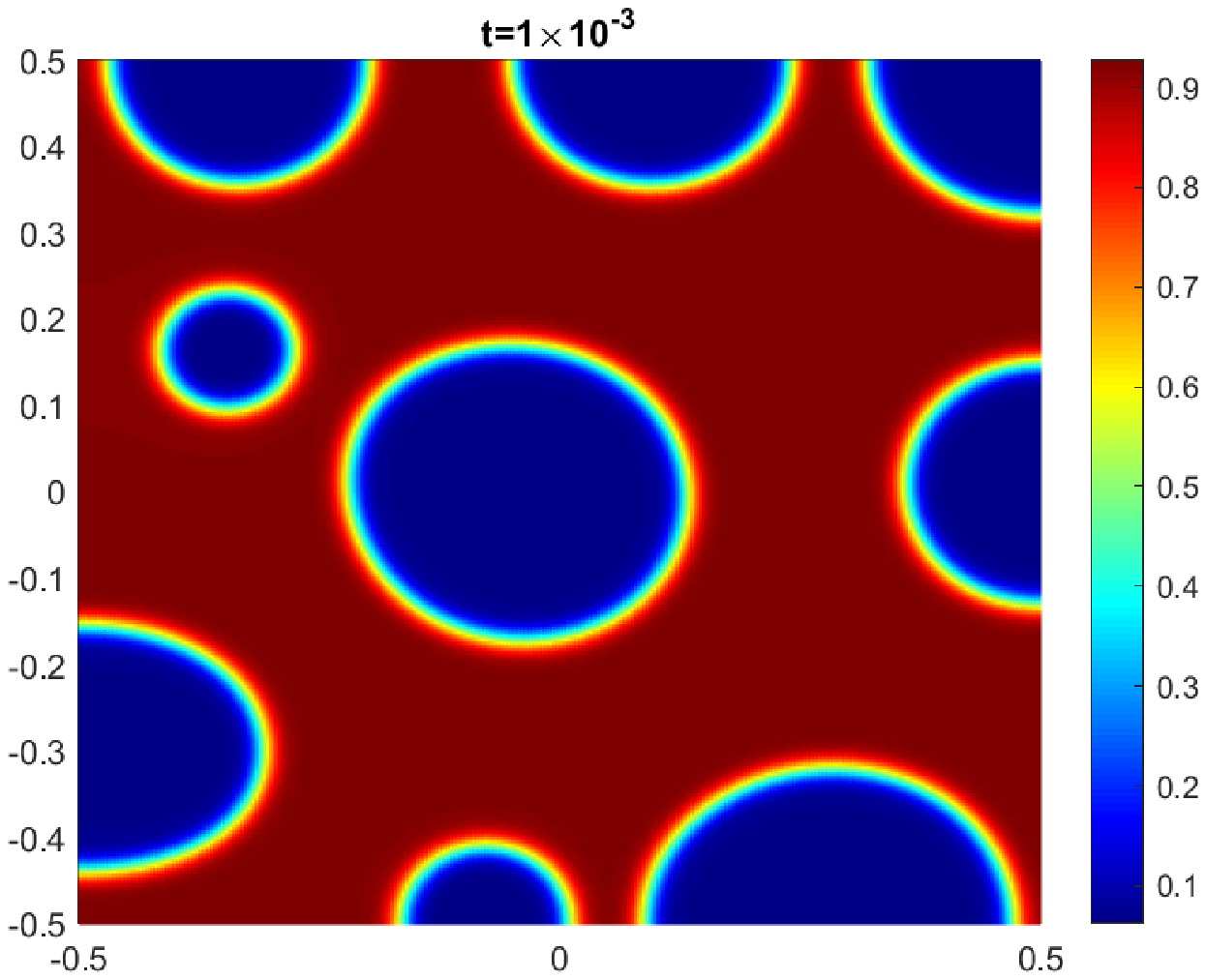}}
\subfigure{\includegraphics[width=0.325\textwidth]{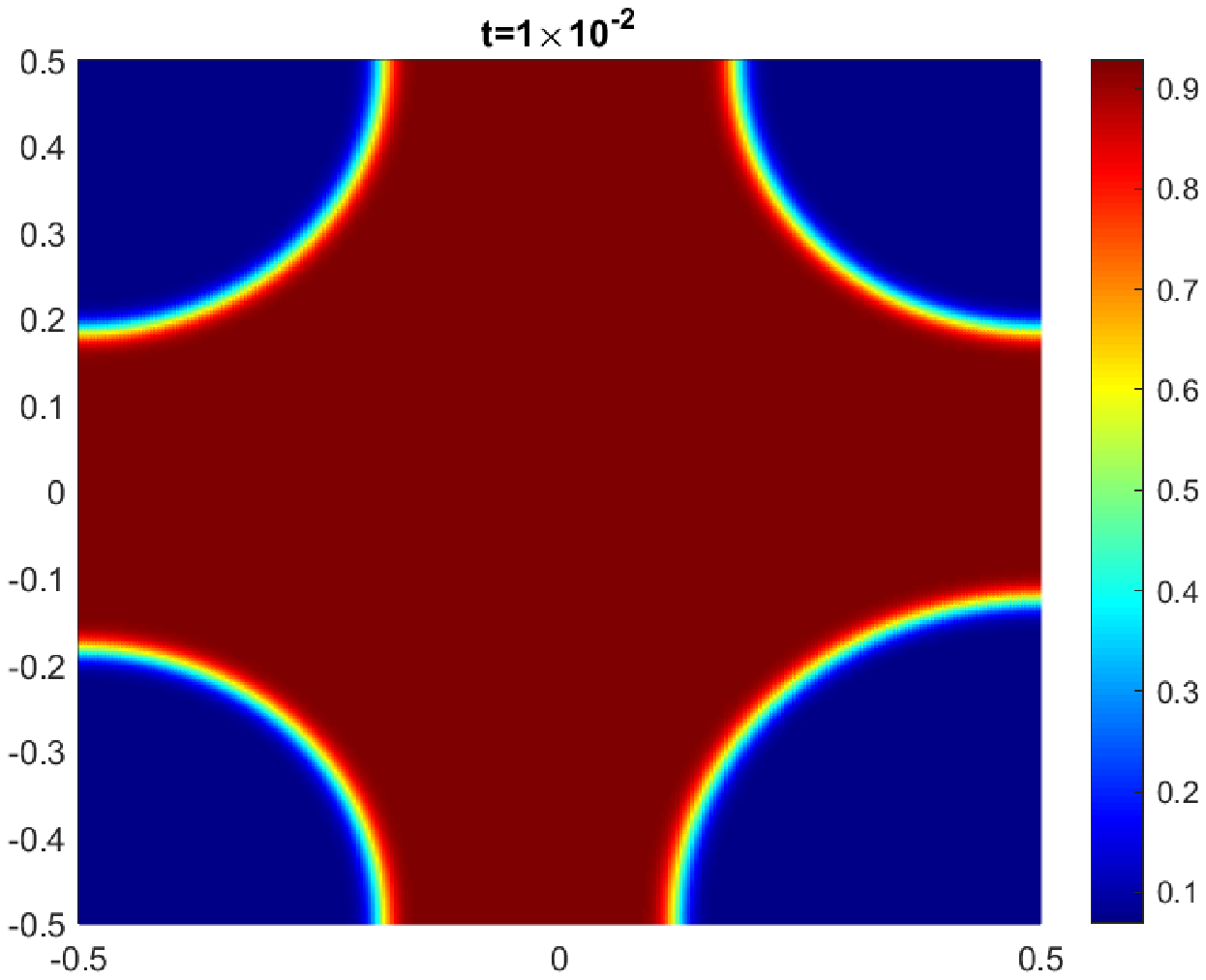}}
\caption{ The contours evolution of the numerical solution for the scheme (\ref{FPDGFull+}). } \label{Npat}
\end{figure}

\begin{figure}
\centering
\subfigure{\includegraphics[width=0.49\textwidth]{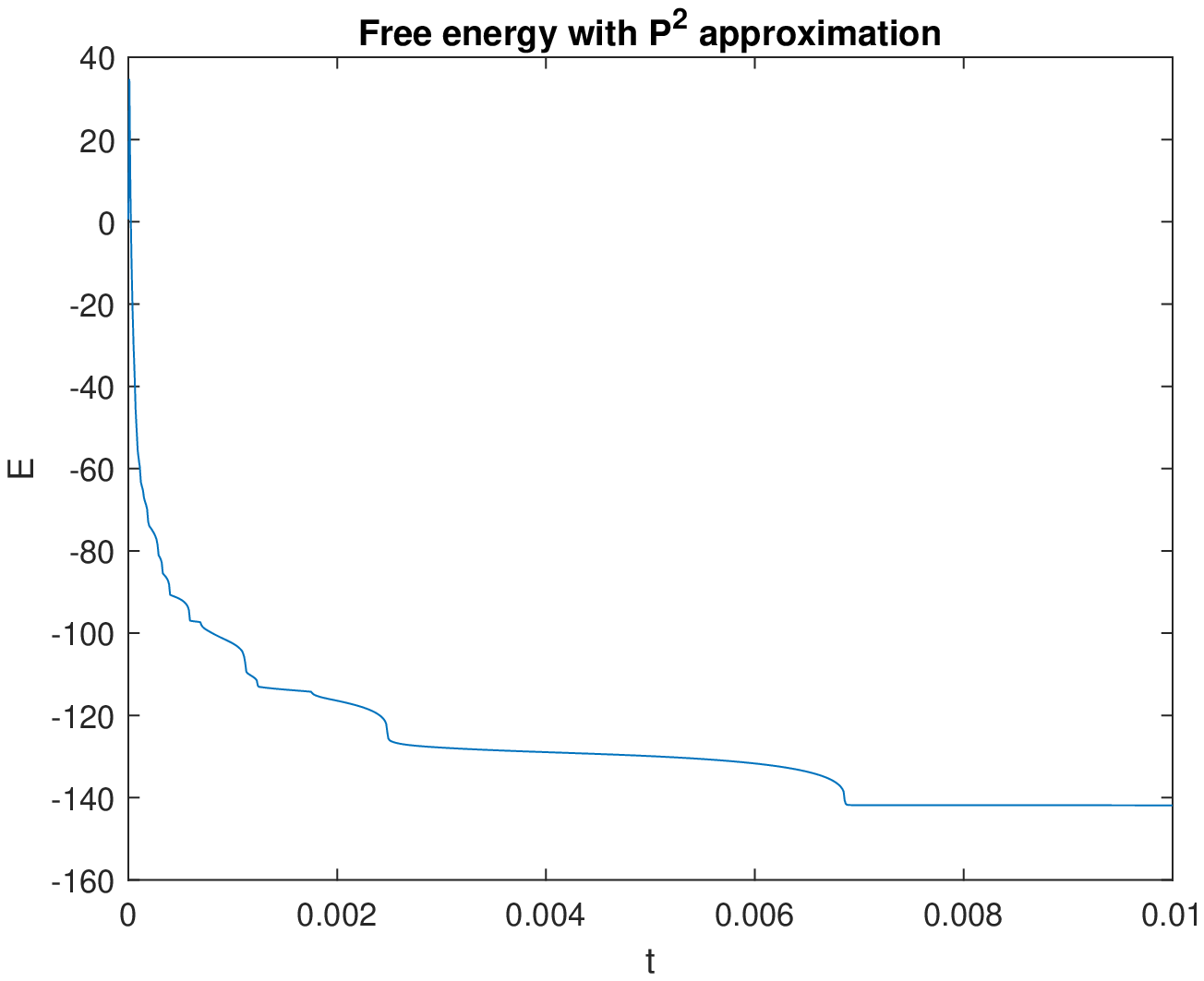}}
\subfigure{\includegraphics[width=0.49\textwidth]{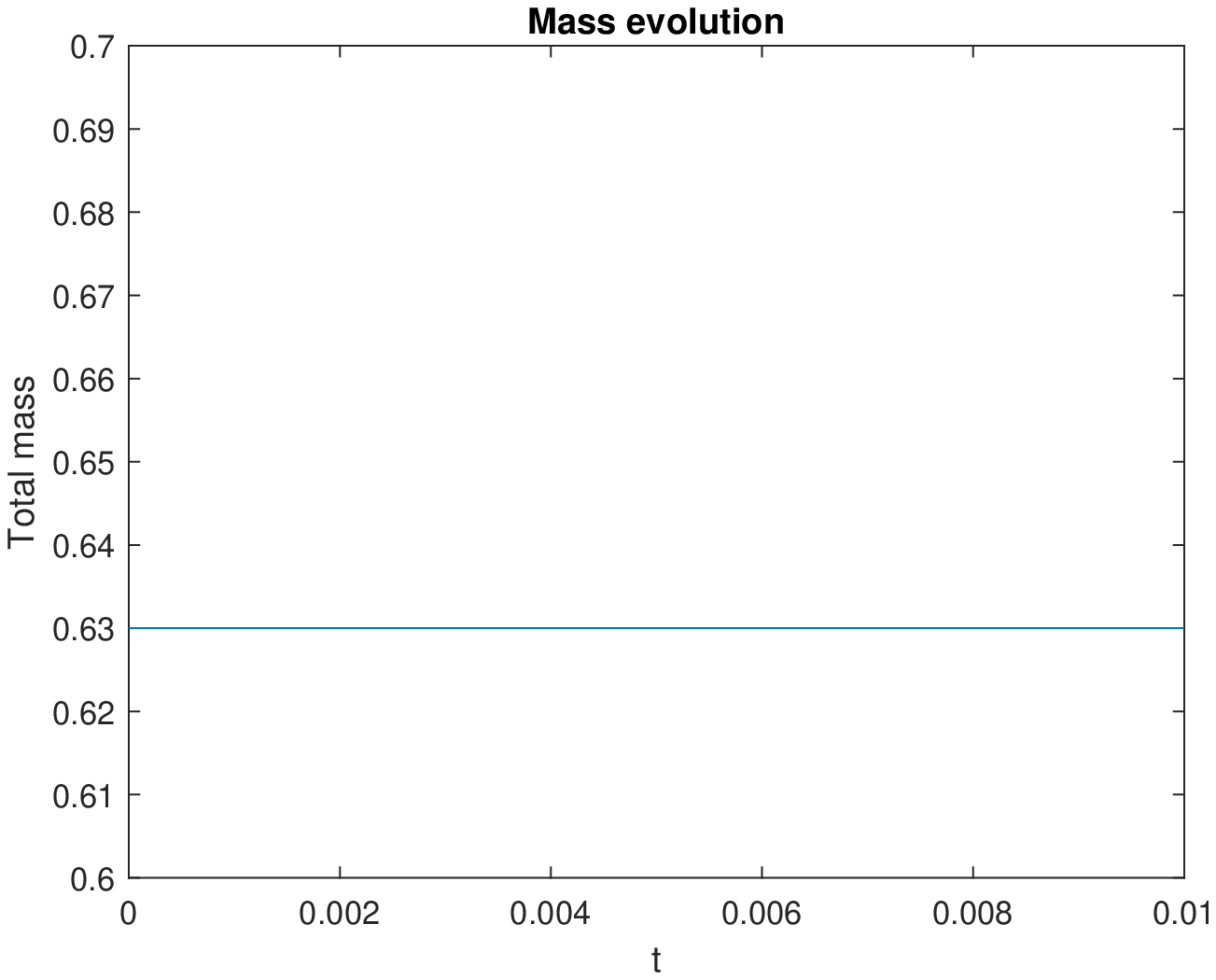}}
\caption{The energy and mass evolution of numerical solution for the scheme (\ref{FPDGFull+}).}\label{Nmasseng}
\end{figure}

\end{example}

\section{Conclusion}
In this paper, we integrate the mixed DG method with the IEQ method to design both first and second order fully discrete DG schemes that inherit the energy dissipation law and mass conservation of the continuous equation irrespectively of the mesh and time steps. The spatial discretization is based on the mixed DG method, and the temporal discretization is based on the IEQ approach introduced in \cite{Y16} for treating nonlinear potentials. Coupled with a spatial projection, the resulting IEQ-DG algorithms are easy to implement without resorting to any iteration method, and proven to be unconditionally energy stable and mass conservative. We have presented numerical examples to verify our theoretical results, and demonstrated the good performance of the scheme in terms of efficiency, accuracy, and preservation of solution properties such as  energy dissipation and mass conservation.

\section*{Acknowledgments}
This work was partially supported by the National Science Foundation under Grant DMS1812666.

\bigskip

\end{document}